\newtheorem{thm}{Theorem}[section]
\newtheorem{lem}[thm]{Lemma}
\newtheorem{cor}[thm]{Corollary}
\newtheorem{pro}[thm]{Proposition}
\theoremstyle{definition}
\DeclareMathOperator{\Card}{Card}
\DeclareMathOperator{\Int}{Int}
\DeclareMathOperator{\F}{F}
\DeclareMathOperator{\Hom}{Hom}
\DeclareMathOperator{\Ad}{Ad}
\DeclareMathOperator{\Gr}{Gr}
\DeclareMathOperator{\rank}{rank}
\DeclareMathOperator{\ch}{ch}
\begin{document}
\title[Discrete Series]{On the geometric approach to the discrete series} 
\author[D. Mili\v ci\'c]{Dragan Mili\v ci\'c}
\address{(Mili\v ci\'c) Department of Mathematics, University of Utah, 
Salt Lake City, UT 84112, USA}
\email{milicic@math.utah.edu}
\urladdr{http://www.math.utah.edu/\textasciitilde milicic}
\author[A. Romanov]{Anna Romanov}
\address{(Romanov) School of Mathematics and Statistics, University of New South Wales, Sydney,
  NSW 2052, Australia}
\email{a.romanov@unsw.edu.au}
\urladdr{http://web.maths.unsw.edu.au/\textasciitilde aromanov/}
\subjclass{Primary 22E46}

\dedicatory{Dedicated to the memory of Harish-Chandra, in admiration.}

\maketitle

\section{Introduction}
Let $G_0$ be a connected semisimple Lie group with finite
center. Discrete series representations of $G_0$ are the irreducible
unitary representations of $G_0$ with square integrable matrix
coefficients. They appear discretely in the decomposition of the
regular representation of $G_0$ on $L^2(G_0)$; i.e., their Plancherel
measure is positive. In his two seminal papers \cite{ds1} and
\cite{ds2}, Harish-Chandra determined the necessary and sufficient
condition for $G_0$ to admit discrete series representations. Let
$K_0$ be a maximal compact subgroup of $G_0$. Then $G_0$ has
discrete series if and only if the ranks of $G_0$ and $K_0$ are
equal. In this situation, a maximal torus $T_0$ in $K_0$ is a
(compact) Cartan subgroup in $G_0$. A discrete series representation
has to have a regular real infinitesimal character. Moreover, the
(distribution) character of a discrete series representation must be a
tempered invariant eigendistribution on $G_0$.

In the first paper \cite{ds1}, Harish-Chandra establishes that every
tempered invariant eigendistribution with regular real infinitesimal
character is completely determined by its restriction to the elliptic
set; i.e., the set of conjugacy classes represented by regular
elements of $T_0$. He also describes all of these restrictions
explicitly. While the first statement is a straightforward application
of Harish-Chandra's ``matching conditions'', the exhaustion part in the
second statement is technically difficult.

In the second paper \cite{ds2}, Harish-Chandra relates the above
invariant eigendistributions to the characters of discrete series
representations, by giving explicit formulas for the restriction of
the latter to the elliptic set in $G_0$. In particular, the characters of
discrete series span the space of tempered invariant
eigendistributions with regular real infinitesimal character. This
part of the proof is intertwined with the proof of the ``discrete
part'' of the Plancherel formula for $G_0$ and an analogue of the
orthogonality relations for discrete series characters (compare
\cite{kazhdan}). The flowchart of the argument is conceptually very
similar to Hermann Weyl's proof of the character formula for
irreducible finite-dimensional representations of connected compact
Lie groups (though the adaptation to the discrete series requires new
ideas and introduces tremendous technical complications).

In \cite{hmsw2}, a different approach to the discrete series is
introduced. It is based on the localization theory of Harish-Chandra
modules developed by Alexander Beilinson and Joseph Bernstein
\cite{bebe1}, \cite{bebe2}. Let $\mathfrak g$ be the complexified Lie
algebra of $G_0$ and $X$ the flag variety of $\mathfrak g$. Let $K$ be
the complexification of $K_0$. Under certain positivity conditions,
they establish the equivalence of categories of Harish-Chandra sheaves
(i.e., $K$-equivariant coherent $\mathcal D$-modules) on $X$ with the
categories of Harish-Chandra modules of the pair $(\mathfrak
g,K)$. Using the connection of $\mathfrak n$-homology (for $\mathfrak
n$ corresponding to the nilpotent subgroup in Iwasawa decomposition of
$G_0$) with leading exponents in the Harish-Chandra expansion of
matrix coefficients \cite[II.2, Thm.~1]{mil}, \cite[8.24]{cm}, one
establishes the relationship between the support of the localizations of
an irreducible Harish-Chandra module and asymptotics of its matrix
coefficients. This result gives a geometric proof of the existence
criterion for discrete series. Moreover, it establishes that the
discrete series are global sections of (irreducible) standard
Harish-Chandra sheaves $\mathcal I(Q,\tau)$ attached to closed
$K$-orbits $Q$ in the flag variety $X$ and irreducible
$K$-equivariant connections $\tau$ on $Q$ compatible with a real
regular infinitesimal character. A simple counting argument shows that
Harish-Chandra's list of irreducible characters of discrete series
agrees with the geometric construction via $\mathcal D$-modules, but
the explicit connection is not immediately clear.

In this paper we make this connection explicit. Our argument is based
on a simple geometric formula for the $\mathfrak n$-homology of
modules over the enveloping algebra $\mathcal U(\mathfrak g)$ for a
regular infinitesimal character proved in Theorem \ref{formula}. Let
$x$ be a point in the flag variety, $\mathfrak b_x$ the corresponding
Borel subalgebra of $\mathfrak g$ and $\mathfrak n_x = [\mathfrak
  b_x,\mathfrak b_x]$. Let $N_x$ be the unipotent subgroup
corresponding to $\mathfrak n_x$; its orbits in $X$ define a Bruhat
stratification of $X$. Roughly speaking, the weight subspaces of
$\mathfrak n_x$-homology of a Harish-Chandra module are determined by
the (derived) direct images to a point of restrictions of the
corresponding Harish-Chandra sheaf to each Bruhat cell.  In Section
\ref{nhomology}, we use this formula to reprove a result of Wilfried
Schmid which describes the $\mathfrak n$-homology of the discrete
series representations $\Gamma(X,\mathcal I(Q,\tau))$ with respect to
nilpotent radical $\mathfrak n$ of a Borel subalgebra $\mathfrak b$
which is stable under the action of the Cartan involution attached to
$K_0$. Using a special case of the Osborne conjecture \cite{osborne},
this gives the formula for the character of these representations on
the elliptic set in $G_0$. This proof is formally similar to the
deduction of the Weyl character formula from Kostant's formula for
$\mathfrak n$-homology of irreducible finite-dimensional
representations of connected compact Lie groups. Establishing the
formula for the character of $\Gamma(X,\mathcal I(Q,\tau))$ makes the
map from geometric parameters of discrete series into Harish-Chandra
parameters completely explicit. As a consequence it also implies that
the space of all tempered invariant eigendistributions on $G_0$ with a
given regular infinitesimal character is spanned by the characters of
discrete series; what immediately implies the most difficult technical
result in \cite{ds1} (the existence of tempered invariant
eigendistributions $\Theta_\lambda$ in \cite[Thm.~3]{ds1}).

The machinery established above allows us to prove some other results
on the discrete series by geometric methods. As an illustration, we
explain in Section \ref{blattner_conj} how a natural $K$-equivariant
filtration of standard Harish-Chandra sheaves $\mathcal I(Q,\tau)$
leads to a very simple proof of Blattner's conjecture \cite{blattner}.

Our arguments can be extended to the Harish-Chandra class of reductive
Lie groups as explained in an appendix to \cite{hmsw1} and
\cite{ha1}. To simplify the notation and reduce a number of technical
issues, we leave this as an exercise to an interested reader.

During two weeks in October 2023, one of us (D.~M.) visited at
Harish-Chandra Research Institute in Prayagraj, UP, India, during
Centennial Celebration of the birth of Harish-Chandra. In the first
week, during the workshop “Representation theory of real Lie groups
and automorphic forms”, he gave a series of lectures on the modern
view of basic results of Harish-Chandra on representation theory of
reductive Lie groups. During the lectures and in inspiring discussions
after them, the audience asked a lot of questions about the relation
between the original results of Harish-Chandra and their “modern”
interpretation. The present paper is an attempt to answer some of
these questions related to discrete series in a coherent fashion. We
thank the organizers for their hospitality and inspiring atmosphere
during the workshop.

The first draft of the paper was written while the authors were
visiting Sydney Mathematical Research Institute at the University of
Sydney. We thank them for their hospitality and stimulating
atmosphere.

\section{A geometric formula for $\mathfrak n$-homology}
\label{geo_formula}
\subsection{Geometric preliminaries}
We start with recalling some basic geometric setup (one can consult
\cite[Ch.~II]{book} for more details). Let $\mathfrak g$ be a complex
semisimple Lie algebra and $X$ its flag variety. For any $x \in X$, we
denote by $\mathfrak b_x$ the corresponding Borel subalgebra. Denote
by $\mathcal B$ the tautological vector subbundle of the trivial
bundle $X \times \mathfrak g$ over $X$ with fiber $\mathfrak b_x$ over
$x \in X$. For any $x \in X$, we denote by $\mathfrak n_x = [\mathfrak
  b_x,\mathfrak b_x]$. Let $\mathcal N$ be the vector subbundle of
$\mathcal B$ with fibers $\mathfrak n_x$, $x \in X$. Then the quotient
vector bundle $\mathcal H = \mathcal B/\mathcal N$ is trivial. We
denote by $\mathfrak h$ the space of its global sections. We call
$\mathfrak h$ the {\em (abstract) Cartan algebra} of $\mathfrak
g$. Let $\mathfrak c$ be a Cartan subalgebra of $\mathfrak g$
contained in $\mathfrak b_x$. Then there is a canonical linear
isomorphism of $\mathfrak c$ with $\mathfrak h$. The dual isomorphism
of $\mathfrak h^*$ with $\mathfrak c^*$ we call the {\em
  specialization} at $x \in X$.

Let $\mathcal U(\mathfrak g)$ be the enveloping algebra of $\mathfrak
g$ and $\mathcal Z(\mathfrak g)$ its center. We have the canonical
Harish-Chandra homomorphism $\gamma : \mathcal Z(\mathfrak g)
\longrightarrow \mathcal U(\mathfrak h)$ \cite[Ch.~VII, \S
  6, no.~4]{bour}, defined in the following way. For any $x \in X$,
$\mathcal Z(\mathfrak g)$ is contained in the sum of the subalgebra
$\mathcal U(\mathfrak b_x)$ and the right ideal $\mathfrak n_x
\mathcal U(\mathfrak g)$ of $\mathcal U(\mathfrak g)$. Hence, we have
the natural projection of $\mathcal Z(\mathfrak g)$ into $\mathcal
U(\mathfrak b_x )/(\mathfrak n_x U(\mathfrak g) \cap \mathcal
U(\mathfrak b_x)) = \mathcal U(\mathfrak b_x)/\mathfrak n_x \mathcal
U(\mathfrak b_x ) = \mathcal U(\mathfrak c )$.  Its composition with
the natural isomorphism of $\mathcal U(\mathfrak c )$ with $\mathcal
U(\mathfrak h)$ is independent of $x$ and, by definition, equal to
$\gamma$.

The specialization $\mathfrak h^* \longrightarrow \mathfrak c^*$
identifies the roots of the pair $(\mathfrak g,\mathfrak c)$ with a
reduced root system $\Sigma$ in $\mathfrak h^*$ which we call the {\em
  root system} of $\mathfrak g$. We choose a positive set of roots
$\Sigma^+$ in $\Sigma$ such that, for any $x \in X$, the root
subspaces corresponding to the specialization of these roots span
$\mathfrak n_x$. Let $\Pi$ be the set of simple roots determined by
$\Sigma^+$. Denote by $W$ the Weyl group of $\Sigma$ and $S$ the set
of reflections with respect to simple roots in $\Pi$. We denote by
$\ell : W \longrightarrow \mathbb Z_+$ the corresponding length
function on $W$. Let $\rho$ be the half sum of roots in $\Sigma^+$.

Since $\mathfrak h$ is abelian, the enveloping algebra $\mathcal
U(\mathfrak h)$ is equal to the symmetric algebra $S(\mathfrak h)$ of
$\mathfrak h$, which is isomorphic to the algebra of all polynomials
on $\mathfrak h^*$. Therefore, for any $\lambda \in \mathfrak h^*$,
the composition $\varphi_\lambda$ of the homomorphism $\gamma :
\mathcal Z(\mathfrak g) \longrightarrow \mathcal U(\mathfrak h)$ with
the evaluation at $\lambda+\rho$ is a character of $\mathcal
Z(\mathfrak g)$. By a classic result of Harish-Chandra \cite[Ch.~VIII,
  \S 2, no.~5, Cor.~1 of Thm.~2]{bour}, $\ker \varphi_\lambda = \ker
\varphi_\mu$ if and only if there exists $w \in W$ such that $\lambda
= w \mu$. This map establishes a bijection of $W$-orbits in $\mathfrak
h^*$ and maximal ideals in $\mathcal Z(\mathfrak g)$. Let $\theta = W
\lambda \subset \mathfrak h^*$ be a $W$-orbit, and denote by
$J_\theta = \ker \varphi_\lambda$ the corresponding maximal
ideal in $\mathcal Z(\mathfrak g)$. We denote by $\mathcal U_\theta$
the quotient $\mathcal U(\mathfrak g)/J_\theta \mathcal U(\mathfrak
g)$.

\subsection{Localization of $\mathcal U_\theta$-modules}
For any $\lambda \in \mathfrak h^*$, Beilinson and Bernstein \cite{bebe1}
constructed a twisted sheaf of differential operators $\mathcal
D_\lambda$ on the flag variety $X$ with a natural algebra homomorphism
$\mathcal U(\mathfrak g) \longrightarrow \Gamma(X,\mathcal
D_\lambda)$.  Moreover, this homomorphism factors through $\mathcal
U_\theta$ and the induced map $\mathcal U_\theta \longrightarrow
\Gamma(X,\,\mathcal D_\lambda)$ is an isomorphism \cite[Ch.~II, Thm.~6.1.(i)]{book}.

Denote by $\mathcal M(\mathcal U_\theta)$ the category of $\mathcal
U_\theta$-modules, and by $\mathcal M(\mathcal D_\lambda)$ the
category of (quasicoherent) $\mathcal D_\lambda$-modules on $X$. One define
the functors
$$
\xymatrix{
\mathcal M(\mathcal D_\lambda) \ar@/^/[rr]^{\Gamma(X,-)} & & \mathcal M(\mathcal 
U_\theta) \ar@/^/[ll]^{\Delta_\lambda}
}
$$
where $\Gamma(X,-)$ is the functor of global sections and
$$
\Delta_\lambda(V) = \mathcal D_\lambda \otimes_{\mathcal U_\theta} V
$$ 
for a module $V$ in $\mathcal M(\mathcal U_\theta)$.
The functor $\Delta_\lambda$ is called the {\em localization at
  $\lambda$}. Clearly, the functor $\Delta_\lambda$ is a left adjoint
of $\Gamma(X,-)$.

Let $\Sigma\, \check{}\,$ be the dual root system of $\Sigma$. Denote by
$\alpha\, \check{}\,$ the dual root of $\alpha$. We say that $\lambda
\in \mathfrak h^*$ is {\em regular} if $\alpha\, \check{}\, (\lambda)
\ne 0$ for any $\alpha \in \Sigma$.

Assume that the orbit $\theta$ consists of regular elements of
$\mathfrak h^*$. Then $\mathcal U_\theta$ has finite cohomological
dimension \cite[Ch.~III, Thm.~1.4]{book}. Therefore there exists the
  left derived functor $L \Delta_\lambda$ between bounded derived
  categories $D^b(\mathcal D_\lambda)$ of $\mathcal M(\mathcal
  D_\lambda)$ and $D^b(\mathcal U_\theta)$ of $\mathcal M(\mathcal
  U_\theta)$. This functor is a left adjoint of the functor $R\Gamma$
  from $D^b(\mathcal D_\lambda)$ into $D^b(\mathcal
  U_\theta)$. Beilinson and Bernstein proved the following result \cite{bebe2}.

\begin{thm}
  \label{equder}
  Let $\theta$ be a regular $W$-orbit in $\mathfrak h^*$ and $\lambda \in \theta$.
  Then the functors
$$
\xymatrix{
D^b(\mathcal D_\lambda) \ar@/^/[rr]^{R\Gamma} & & D^b(\mathcal 
U_\theta) \ar@/^/[ll]^{L \Delta_\lambda}
}
$$
are mutually quasiinverse equivalences of categories.
\end{thm}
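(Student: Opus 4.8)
The plan is to show that both the unit $\eta\colon\id\to R\Gamma\circ L\Delta_\lambda$ and the counit $\varepsilon\colon L\Delta_\lambda\circ R\Gamma\to\id$ of the adjunction $(L\Delta_\lambda,R\Gamma)$ are isomorphisms of functors. Since $L\Delta_\lambda$ is the left adjoint, I would break this into two claims: first, that $\eta$ is an isomorphism, i.e.\ that $L\Delta_\lambda$ is fully faithful; and second, that $R\Gamma$ reflects zero objects. Granting both, $\varepsilon$ is automatically an isomorphism: from the triangle identity $R\varepsilon\circ\eta R=\id$ and the first claim one sees that $R\varepsilon$ is an isomorphism, and embedding $\varepsilon_{\mathcal F}$ in a distinguished triangle one finds its cone is annihilated by $R\Gamma$, hence zero by the second claim.

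For the first claim, I would use that $\theta$ is regular, so that $\mathcal U_\theta$ has finite cohomological dimension: this is exactly what makes $L\Delta_\lambda$ send $D^b(\mathcal U_\theta)$ into $D^b(\mathcal D_\lambda)$, and it also implies that the free modules $\mathcal U_\theta^{(I)}$ generate $D^b(\mathcal U_\theta)$ as a triangulated category closed under direct sums. As $\id$ and $R\Gamma\circ L\Delta_\lambda$ are triangulated and commute with direct sums, it then suffices to check that $\eta$ is an isomorphism on $\mathcal U_\theta$ placed in degree $0$. There $L\Delta_\lambda(\mathcal U_\theta)=\mathcal D_\lambda$ (a free module being flat over $\mathcal U_\theta$, there are no higher derived terms), and $R\Gamma(\mathcal D_\lambda)=\Gamma(X,\mathcal D_\lambda)$, the vanishing $H^i(X,\mathcal D_\lambda)=0$ for $i>0$ being a consequence of the order filtration of $\mathcal D_\lambda$ --- whose associated graded sheaf is $\pi_*\mathcal O_{T^*X}$, independently of $\lambda$ --- together with the vanishing of the higher cohomology of the structure sheaf of the cotangent bundle $T^*X$. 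Under these identifications $\eta_{\mathcal U_\theta}$ is precisely the isomorphism $\mathcal U_\theta\xrightarrow{\sim}\Gamma(X,\mathcal D_\lambda)$ recalled above, so $\eta$ is an isomorphism.

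For the second claim, I would first observe that it is enough to establish it for one point of the orbit $\theta$: the intertwining functors $I_w$, $w\in W$, give equivalences among the derived categories $D^b(\mathcal D_{\lambda'})$, $\lambda'\in\theta$, compatibly with the global-sections functors (which all land in $D^b(\mathcal U_\theta)$ under the canonical identifications), so whether $R\Gamma$ reflects zero objects is independent of the chosen $\lambda'\in\theta$. I would then take $\lambda'\in\theta$ antidominant (and still regular) and invoke the abelian localization theorem of Beilinson and Bernstein: for such $\lambda'$, $\Gamma(X,-)\colon\mathcal M(\mathcal D_{\lambda'})\to\mathcal M(\mathcal U_\theta)$ is exact and faithful (indeed an equivalence of abelian categories). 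Exactness upgrades to the vanishing of $R^i\Gamma$, $i>0$, on all of $\mathcal M(\mathcal D_{\lambda'})$, so that $R\Gamma$ is computed termwise by $\Gamma$; exactness and faithfulness of $\Gamma$ then force every cohomology sheaf of a complex $\mathcal F^\bullet$ with $R\Gamma(\mathcal F^\bullet)=0$ to vanish. Together with the first claim, this yields the pair of mutually quasi-inverse equivalences.

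The genuinely hard part is the abelian localization theorem used in the last step; everything else is formal triangulated-category manipulation together with the comparatively soft cohomology vanishing for $\mathcal D_\lambda$ and the formal properties of the intertwining functors. That theorem --- for $\lambda'$ regular antidominant, every quasicoherent $\mathcal D_{\lambda'}$-module is generated by its global sections and $\Gamma(X,-)$ has no higher cohomology --- is the only step in which regularity of $\theta$ does essential work. Its proof proceeds by filtering a $\mathcal D_{\lambda'}$-module by order, passing to the associated graded sheaf on $T^*X$, and analysing the geometry of the moment map $T^*X\to\mathfrak g^*$, the regular antidominant twist supplying the requisite positivity; I would take this as input from \cite{bebe1}.
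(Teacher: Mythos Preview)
The paper does not supply a proof of this theorem; it simply attributes the result to Beilinson and Bernstein \cite{bebe2} and then records the abelian special case (regular antidominant $\lambda$) as the next statement. Your outline is in fact the standard argument from \cite{bebe2}, so there is no discrepancy of approach to report.

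One point does require care. In your second claim you invoke that the intertwining functors $LI_w$ are equivalences $D^b(\mathcal D_\lambda)\to D^b(\mathcal D_{w\lambda})$ satisfying $R\Gamma\circ LI_w\cong R\Gamma$. In the paper's expository order these facts are \emph{deduced from} the very theorem you are proving: the text first states the derived equivalence, then observes that $L\Delta_{w\lambda}\circ R\Gamma$ is an equivalence ``by Theorem~\ref{equder}'', and only afterwards identifies it with the geometrically defined $LI_w$. To avoid circularity you must establish both properties of $LI_w$ directly from its geometric construction, independently of the theorem. This is how \cite{bebe2} actually proceeds---one reduces to simple reflections via the product formula $LI_w\circ LI_{w'}\cong LI_{ww'}$ for $\ell(ww')=\ell(w)+\ell(w')$, exhibits an explicit quasi-inverse for $LI_{s_\alpha}$, and proves $R\Gamma\circ LI_w\cong R\Gamma$ by a projection-formula computation of $Rp_{1,*}$---but as written your sentence elides exactly the step that carries the weight of the reduction. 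You should either cite these facts precisely or sketch their independent proofs.

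A minor correction: your closing description of the proof of the abelian localization theorem is not the argument of \cite{bebe1}. The original proof does not pass to $T^*X$ or use the moment map; it uses translation by finite-dimensional $\mathfrak g$-modules to reduce the vanishing and global-generation statements to Serre's theorems for the ample line bundles $\mathcal O(\mu)$ on $X$. The order filtration and cotangent bundle enter only in the much softer computation of $R\Gamma(X,\mathcal D_\lambda)$, which you already used correctly in your first claim.
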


We say that $\lambda \in \mathfrak h^*$ is {\em antidominant} if
$\alpha \check{\ }(\lambda) \notin \{1,2, \ldots \}$ for all $\alpha
\in \Sigma^+$. For antidominant $\lambda$, the above theorem is a
direct consequence of the following result \cite{bebe1}.
\begin{thm}
Let $\lambda \in \mathfrak h^*$ be antidominant and regular. 
Then the functors
$$
\xymatrix{
  \mathcal M(\mathcal D_\lambda) \ar@/^/[rr]^{\Gamma(X,-)} & & \mathcal M(\mathcal U_\theta)
  \ar@/^/[ll]^{\Delta_\lambda}
}
$$
are mutually quasi-inverse equivalences of categories.
\end{thm}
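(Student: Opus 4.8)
The plan is to reduce the statement to two properties of the functor $\Gamma(X,-)$ on $\mathcal M(\mathcal D_\lambda)$ — exactness and faithfulness — and then to conclude by a formal manipulation of the adjunction $(\Delta_\lambda,\Gamma(X,-))$, using the isomorphism $\mathcal U_\theta\xrightarrow{\ \sim\ }\Gamma(X,\mathcal D_\lambda)$ recalled above. Write (E) for the assertion that $H^{i}(X,\mathcal M)=0$ for all $i>0$ and all $\mathcal M\in\mathcal M(\mathcal D_\lambda)$, and (F) for the assertion that $\Gamma(X,\mathcal M)=0$ implies $\mathcal M=0$. It suffices to prove (E) and (F).

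Indeed, granting (E) and (F), one argues as follows. For $V\in\mathcal M(\mathcal U_\theta)$, choose a presentation $\mathcal U_\theta^{(J)}\to\mathcal U_\theta^{(I)}\to V\to 0$ by free modules; applying the right exact functor $\Delta_\lambda$ and then $\Gamma(X,-)$ — which by (E) is exact and which commutes with arbitrary direct sums of quasicoherent sheaves on the projective variety $X$ — produces an exact sequence $\mathcal U_\theta^{(J)}\to\mathcal U_\theta^{(I)}\to\Gamma(X,\Delta_\lambda(V))\to 0$ that is carried to the original one by the adjunction unit, so $\eta_{V}\colon V\to\Gamma(X,\Delta_\lambda(V))$ is an isomorphism. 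For the counit $\varepsilon_{\mathcal M}\colon\Delta_\lambda(\Gamma(X,\mathcal M))\to\mathcal M$, the triangle identity $\Gamma(\varepsilon_{\mathcal M})\circ\eta_{\Gamma(X,\mathcal M)}=\id$ shows, since $\eta$ is an isomorphism, that $\Gamma(\varepsilon_{\mathcal M})$ is an isomorphism; applying the exact functor $\Gamma(X,-)$ to $0\to\ker\varepsilon_{\mathcal M}\to\Delta_\lambda(\Gamma(X,\mathcal M))\to\mathcal M\to\coker\varepsilon_{\mathcal M}\to 0$ then forces $\Gamma(X,\ker\varepsilon_{\mathcal M})=\Gamma(X,\coker\varepsilon_{\mathcal M})=0$, so by (F) the kernel and cokernel vanish and $\varepsilon_{\mathcal M}$ is an isomorphism. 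Thus $\eta$ and $\varepsilon$ are isomorphisms, i.e. $\Delta_\lambda$ and $\Gamma(X,-)$ are mutually quasi-inverse equivalences of categories.

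It remains to establish (E) and (F). Since every quasicoherent $\mathcal D_\lambda$-module is the filtered union of its coherent $\mathcal D_\lambda$-submodules, and both $H^{i}(X,-)$ on the Noetherian scheme $X$ and $\Gamma(X,-)$ commute with such colimits, one reduces to coherent $\mathcal M$. For coherent $\mathcal M$, choose a good filtration: the associated graded is a coherent sheaf on the cotangent bundle $T^{*}X$ whose support, by the central character condition, lies over the cone of nilpotent elements of $\mathfrak g^{*}$ under the (proper) moment map $T^{*}X\to\mathfrak g^{*}$, and that cone is affine with rational singularities. This geometry immediately gives the vanishing of the higher cohomology of $\mathcal D_\lambda$ itself and of its twists by the relevant line bundles, but passing from there to arbitrary coherent $\mathcal M$ is the substance of Beilinson and Bernstein's theorem that $X$ is $\mathcal D_\lambda$-affine for antidominant $\lambda$. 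Its proof is a dévissage of $\mathcal M$ along the Bruhat stratification of $X$; at the bottom of the dévissage the required cohomology estimates reduce, via Kashiwara's equivalence along the (smooth, locally closed) strata, to Borel–Weil–Bott statements in which the antidominance of $\lambda$ — equivalently the dominance of $\lambda+\rho$ — is exactly what makes the vanishing hold, while the regularity of $\lambda$ is exactly what makes the complementary non-vanishing, hence (F), hold. I expect this cohomology vanishing, that is, the $\mathcal D_\lambda$-affinity of $X$, to be the main obstacle; everything else is formal. Alternatively, once (E) is established, (F) becomes automatic: by Theorem \ref{equder} the functor $R\Gamma$ is an equivalence $D^{b}(\mathcal D_\lambda)\xrightarrow{\ \sim\ }D^{b}(\mathcal U_\theta)$ for the regular orbit $\theta$, and (E) says that its restriction to $\mathcal M(\mathcal D_\lambda)$ is $\Gamma(X,-)$; the restriction of a faithful functor is faithful, which is (F).
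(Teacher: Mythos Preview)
The paper does not prove this theorem; it is quoted from \cite{bebe1}. Your reduction to the exactness (E) and faithfulness (F) of $\Gamma(X,-)$, and the formal adjunction argument deriving the equivalence from them, are correct and match the architecture of Beilinson--Bernstein's proof.

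The gap is in your sketch of (E). You propose a d\'evissage of an arbitrary coherent $\mathcal D_\lambda$-module $\mathcal M$ along the Bruhat stratification, reducing via Kashiwara's equivalence on each cell to a Borel--Weil--Bott computation. This does not work: a general $\mathcal D_\lambda$-module carries no $B$- or $N$-equivariance, its characteristic variety need not be a union of conormal bundles to Bruhat cells, and its restriction to a cell has no reason to be $\mathcal O$-coherent, so Kashiwara's equivalence on the cell yields nothing computable. The Bruhat d\'evissage is exactly what drives the calculations in Sections~\ref{geo_formula}--\ref{nhomology} of the paper, but there the $\mathcal D$-modules in play are equivariant and holonomic; for arbitrary $\mathcal M$ the method is unavailable.

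Beilinson and Bernstein's actual mechanism for (E) and (F) is entirely different: it is a tensor-product trick with finite-dimensional $\mathfrak g$-modules. For such an $F$, the sheaf $F\otimes_{\mathbb C}\mathcal O_X$ carries a $\mathcal D$-module filtration with successive quotients the line bundles $\mathcal O(\nu)$, $\nu$ running over the weights of $F$. Antidominance of $\lambda$ forces the extremal graded piece to split off as a direct summand; choosing $F$ so that this extremal twist lands in the Serre-vanishing range, one deduces (E) for arbitrary coherent $\mathcal M$ from the acyclicity of $\mathcal D_\lambda$ itself (where your moment-map remark is indeed relevant). Regularity of $\lambda$ enters through the analogous splitting needed for (F). Your alternative route to (F) via Theorem~\ref{equder} is logically permissible, since that theorem is established independently in \cite{bebe2}, but it does not help with (E), which is where your plan breaks down.
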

One can view this result as a vast generalization of the Borel-Weil theorem.

\subsection{Intertwining functors}
Let $\lambda \in \mathfrak h^*$ be regular. For any $w \in W$, the
functor $L \Delta_{w\lambda} \circ R \Gamma : D^b(\mathcal D_\lambda)
\longrightarrow D^b(\mathcal D_{w\lambda})$ is an equivalence of
categories by Theorem \ref{equder}.

In \cite{bebe2}, Beilinson and Bernstein describe this functor
geometrically under some additional conditions. We sketch their
construction here (compare \cite[Ch.~III, Sec.~3]{book}). Let $Z_w$ be
the variety of pairs of Borel subalgebras in relative position $w$ in
$X \times X$. Then $Z_w$, $w \in W$, are the orbits for the diagonal
action of the group $\Int(\mathfrak g)$ of inner automorphisms of
$\mathfrak g$ on $X \times X$. Denote by $p_1 : Z_w \longrightarrow X$
and $p_2 : Z_w \longrightarrow X$ the restrictions of the projections
to the first, resp.~second factor in $X \times X$.  Then $p_1$ and
$p_2$ are locally trivial fibrations with fibers which are affine
spaces of dimension $\ell(w)$ \cite[Ch.~III, 3.2]{book}. In
particular, $\dim Z_w = \ell(w) + \dim X$ for any $w \in W$.

The twisted sheaf of differential operators $\mathcal D_\lambda$ on
$X$ determines a compatible twisted sheaf $\mathcal D_\lambda^{p_i}$
on $Z_w$ for $i=1,2$. Let $p_2^+ : \mathcal M(\mathcal D_\lambda) \longrightarrow
\mathcal M(\mathcal D_\lambda^{p_2})$ be the $\mathcal D$-module
inverse image functor. This functor is exact and therefore determines
trivially the functor between corresponding derived categories of
$\mathcal D$-modules. Analogously, we have the direct image functor
$p_{1,+} : D^b(\mathcal D_{w\lambda}^{p_1}) \longrightarrow
D^b(\mathcal D_{w\lambda})$. Since the twisted sheaves of
differential operators $\mathcal D_\lambda^{p_2}$ and $\mathcal
D_{w\lambda}^{p_1}$ differ by a twist by the invertible $\mathcal
O_{Z_w}$-module $\mathcal T_w = p_1^*(\mathcal O(\rho - w \rho))$, we
can define the functor
$$
\mathcal V^\cdot \longrightarrow p_{1+}(\mathcal T_w
\otimes_{\mathcal O_{Z_w}} p_2^+(\mathcal V^\cdot))
$$
from $D^b(\mathcal D_\lambda)$ into $D^b(\mathcal D_{w \lambda})$.

There exists a right exact functor $I_w :\mathcal M(\mathcal
D_\lambda) \longrightarrow \mathcal M(\mathcal D_{w\lambda})$ such
that its left derived functor $LI_w : D^b(\mathcal D_\lambda)
\longrightarrow D^b(\mathcal D_{w\lambda})$ is the functor above
$$
LI_w(\mathcal V^\cdot) 
= p_{1+}(\mathcal T_w \otimes_{\mathcal O_{Z_w}} p_2^+(\mathcal V^\cdot))
$$
for any $\mathcal V^\cdot$ in $D^b(\mathcal D_\lambda)$. The functor $LI_w$ is
the {\em intertwining functor} corresponding to $w \in W$.

For any subset $\Theta$ of $\Sigma^+$, we say that $\lambda \in \mathfrak
h^*$ is {\em $\Theta$-antidominant} if $\alpha \check{\ }(\lambda)$ is not
a strictly positive integer for any $\alpha \in \Theta$. Put
$$
\Sigma_w^+ = \{ \alpha \in \Sigma^+ \mid w \alpha \in - \Sigma^+\}
$$
for $w \in W$. The following result is what we alluded to above.

\begin{thm}
Let $w \in W$ and let $ \lambda \in \mathfrak h^*$ be
$\Sigma_w^+$-antidominant and regular. Then the functors $LI_w \circ
L\Delta_\lambda$ and $L\Delta_{w\lambda}$ are isomorphic.
\end{thm}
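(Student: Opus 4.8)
The plan is to leverage the equivalence of Theorem~\ref{equder} and then reduce the whole statement, via the product formula for intertwining functors, to a single cohomology computation for a simple reflection. Both $LI_w\circ L\Delta_\lambda$ and $L\Delta_{w\lambda}$ are functors $D^b(\mathcal U_\theta)\to D^b(\mathcal D_{w\lambda})$, and $w\lambda$ lies in the regular orbit $\theta$, so by Theorem~\ref{equder} the functor $R\Gamma\colon D^b(\mathcal D_{w\lambda})\to D^b(\mathcal U_\theta)$ is an equivalence with quasi-inverse $L\Delta_{w\lambda}$. It therefore suffices to produce a natural isomorphism
\[
 R\Gamma\circ LI_w\circ L\Delta_\lambda\ \cong\ R\Gamma\circ L\Delta_{w\lambda}
\]
and then apply the quasi-inverse $L\Delta_{w\lambda}$. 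The right-hand side is isomorphic to $\id_{D^b(\mathcal U_\theta)}$ by Theorem~\ref{equder} applied to $w\lambda$, whereas the left-hand side equals $(R\Gamma\circ LI_w)\circ L\Delta_\lambda$ and $R\Gamma\circ L\Delta_\lambda\cong\id_{D^b(\mathcal U_\theta)}$ by Theorem~\ref{equder} applied to $\lambda$. Hence the theorem follows from the claim that, for $\lambda\in\mathfrak h^*$ which is $\Sigma_w^+$-antidominant and regular, there is a natural isomorphism $R\Gamma\circ LI_w\cong R\Gamma$ of functors $D^b(\mathcal D_\lambda)\to D^b(\mathcal U_\theta)$, where on the left $R\Gamma$ denotes global sections on $D^b(\mathcal D_{w\lambda})$.

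I would prove this claim by induction on $\ell(w)$, the case $w=e$ being trivial since $LI_e=\id$. For the inductive step, write $w=s_\alpha w'$ with $s_\alpha$ a simple reflection and $\ell(w)=\ell(w')+1$. Then $LI_w\cong LI_{s_\alpha}\circ LI_{w'}$ by the product formula for intertwining functors \cite{bebe2}, \cite[Ch.~III]{book}, and $\Sigma_w^+=\Sigma_{w'}^+\sqcup\{(w')^{-1}\alpha\}$. As $\Sigma_{w'}^+\subset\Sigma_w^+$, the element $\lambda$ is $\Sigma_{w'}^+$-antidominant, so the inductive hypothesis gives $R\Gamma\circ LI_{w'}\cong R\Gamma$; and as $(w')^{-1}\alpha\in\Sigma_w^+$ with $\alpha^\vee(w'\lambda)=((w')^{-1}\alpha)^\vee(\lambda)$, the parameter $w'\lambda$ is $\{\alpha\}$-antidominant, so the base case $w=s_\alpha$ gives $R\Gamma\circ LI_{s_\alpha}\cong R\Gamma$ on $D^b(\mathcal D_{w'\lambda})$. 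Composing these two natural isomorphisms completes the induction and reduces everything to the case $w=s=s_\alpha$ with $\alpha^\vee(\lambda)\notin\{1,2,\dots\}$.

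This simple-reflection case is the geometric core of the argument and the step I expect to be the main obstacle. Here $Z_s$ is the complement of the diagonal in the $\mathbb P^1$-bundle $\overline{Z}_s=X\times_{X_\alpha}X$, where $X_\alpha$ is the variety of parabolic subalgebras of type $\alpha$, and $p_1,p_2\colon Z_s\to X$ are affine-line fibrations. Since direct images to a point compose, $R\Gamma(X,LI_s\mathcal V^\cdot)$ is computed by the fibration $p_2$ applied to the twisted inverse image $\mathcal T_s\otimes_{\mathcal O_{Z_s}}p_2^+\mathcal V^\cdot$, and one analyses this by first extending across the diagonal to $\overline{Z}_s$ using the standard triangle for the open embedding $Z_s\hookrightarrow\overline{Z}_s$. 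What remains is a Borel--Weil--Bott type calculation along the $\mathbb P^1$-fibres of $\overline{Z}_s\to X$: the $\mathcal D_\lambda$-twist, together with the contribution of $\mathcal T_s=p_1^*\mathcal O(\rho-s\rho)$, restricts on each fibre to a line bundle whose degree depends affinely on $\alpha^\vee(\lambda)$, and the hypothesis $\alpha^\vee(\lambda)\notin\{1,2,\dots\}$ is exactly the condition making both the higher cohomology along the fibres and the contribution supported on the diagonal vanish; then the push-forward recovers $\mathcal V^\cdot$ and $R\Gamma(X,LI_s\mathcal V^\cdot)\cong R\Gamma(X,\mathcal V^\cdot)$. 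The delicate points are to make this isomorphism canonical and functorial in $\mathcal V^\cdot$ --- which I would arrange by realizing it as the morphism induced by the (co)unit of the adjunction attached to $p_1$ and checking that the fibrewise vanishing makes it invertible --- and the precise bookkeeping of the twisted sheaves of differential operators $\mathcal D_\lambda^{p_i}$ and of the line-bundle degrees on $\overline{Z}_s$; the details are in \cite{bebe2} and \cite[Ch.~III, Sec.~3]{book}.
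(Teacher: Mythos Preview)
The paper does not actually prove this theorem: it is stated as a result of Beilinson--Bernstein, with references to \cite{bebe2} and \cite[Ch.~III, Sec.~3]{book}, and no argument is given beyond the sketch of the construction of $LI_w$. So there is no ``paper's own proof'' to compare against.

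That said, your proposal is the standard argument one finds in those references, and the logic is sound. The reduction to showing $R\Gamma\circ LI_w\cong R\Gamma$ via Theorem~\ref{equder} is correct; the induction on $\ell(w)$ using the product formula $LI_{s_\alpha w'}\cong LI_{s_\alpha}\circ LI_{w'}$ for $\ell(s_\alpha w')=\ell(w')+1$ is the usual route; and your verification that $\Sigma_w^+=\Sigma_{w'}^+\sqcup\{(w')^{-1}\alpha\}$ together with the identity $\alpha^\vee(w'\lambda)=((w')^{-1}\alpha)^\vee(\lambda)$ correctly propagates the antidominance hypotheses through the induction. The simple-reflection step you describe --- extending over the $\mathbb P^1$-bundle $\overline Z_s$, using the distinguished triangle for the open/closed decomposition, and a Borel--Weil--Bott computation along the fibres controlled by $\alpha^\vee(\lambda)\notin\{1,2,\dots\}$ --- is exactly how it is done in \cite[Ch.~III, \S3]{book}. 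You are right that the functoriality of the isomorphism and the bookkeeping of twists are the places where care is needed; those details are carried out in the cited sources.
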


In particular, we have the following.

\begin{cor}
  \label{locdet}
  Let $w \in W$ and $\lambda \in \mathfrak h^*$ be regular
  antidominant. Then the functors $L\Delta_{w\lambda}$ and $LI_w \circ
  \Delta_\lambda$ are isomorphic.
\end{cor}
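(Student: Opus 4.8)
The plan is to derive this directly from the theorem immediately preceding it, once we verify that its hypotheses hold in the present situation and that the derived localization functor collapses to $\Delta_\lambda$.

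First I would observe that a regular antidominant $\lambda$ is in particular $\Sigma_w^+$-antidominant for every $w \in W$. Indeed, antidominance of $\lambda$ means that $\alpha\check{\ }(\lambda) \notin \{1, 2, \ldots\}$ for all $\alpha \in \Sigma^+$, that is, $\lambda$ is $\Sigma^+$-antidominant, and $\Sigma_w^+ \subseteq \Sigma^+$. Hence the preceding theorem applies to our $\lambda$ and yields an isomorphism of functors
\[
LI_w \circ L\Delta_\lambda \cong L\Delta_{w\lambda} : D^b(\mathcal U_\theta) \longrightarrow D^b(\mathcal D_{w\lambda}).
\]

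The second step is to note that for a regular antidominant $\lambda$ the localization functor $\Delta_\lambda$ is exact, so that its left derived functor is computed by applying $\Delta_\lambda$ term by term to a bounded complex; in other words, $L\Delta_\lambda$ and $\Delta_\lambda$ coincide as functors $D^b(\mathcal U_\theta) \longrightarrow D^b(\mathcal D_\lambda)$. This is immediate from the Beilinson--Bernstein equivalence of abelian categories stated above for regular antidominant $\lambda$: for such $\lambda$ the functor $\Delta_\lambda$ is a quasi-inverse of $\Gamma(X, -)$, hence is itself an equivalence of abelian categories, hence exact; equivalently $L_i \Delta_\lambda = 0$ for $i > 0$. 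Substituting this identity into the displayed isomorphism gives $L\Delta_{w\lambda} \cong LI_w \circ \Delta_\lambda$, which is precisely the assertion of the corollary.

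I do not expect any genuine obstacle: the corollary is a formal consequence of the preceding theorem once the definitions are unwound. The only two points that need a moment's attention are the compatibility of the two notions of antidominance --- handled in the first step --- and the collapse $L\Delta_\lambda = \Delta_\lambda$, which rests exactly on the regularity-plus-antidominance hypothesis; without it $\Delta_\lambda$ need not be exact, and the conclusion would have to be phrased with $L\Delta_\lambda$ as in the theorem.
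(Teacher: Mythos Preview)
Your proposal is correct and matches the paper's approach: the paper gives no explicit proof, presenting the corollary simply as ``In particular, we have the following'' after the theorem, so it is treated as an immediate consequence. The two points you spell out---that regular antidominant $\lambda$ is $\Sigma_w^+$-antidominant for every $w$, and that $L\Delta_\lambda$ collapses to $\Delta_\lambda$ by the Beilinson--Bernstein equivalence of abelian categories---are precisely the details one must supply.
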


Hence, the localization $\Delta_\lambda(V)$ of a $\mathcal
U_\theta$-module $V$, for a regular antidominant $\lambda \in \theta$,
determines all (derived) localizations $L^p \Delta_{w \lambda}(V), \ p
\in \mathbb Z_+$.

\subsection{$\mathfrak n$-homology}
Let $\theta$ be a $W$-orbit consisting of regular elements. Let $V$ be
an object in $\mathcal M(\mathcal U_\theta)$.

Let $x \in X$. As before, we pick a Cartan subalgebra $\mathfrak c$ of
$\mathfrak g$ contained in $\mathfrak b_x$. The Lie algebra
homology groups $H_\cdot(\mathfrak n_x,V)$ are $\mathfrak
c$-modules. Via the dual of the specialization map, we can view them
as $\mathfrak h$-modules.

By a result of Casselman and Osborne \cite{casosb},
\cite{var}, (compare \cite[Ch.~III, 2.4]{book}), we know that the Lie
algebra homology groups $H_\cdot(\mathfrak n_x,V)$ are semisimple
$\mathfrak h$-modules. If we denote by $H_p(\mathfrak n_x,V)_{(\mu)}$
the $\mu$-eigenspace of $H_p(\mathfrak n_x,V)$ for any $\mu \in
\mathfrak h^*$, we have
$$
H_p(\mathfrak n_x,V) = \bigoplus_{w \in W} H_p(\mathfrak n_x,V)_{(w\lambda + \rho)}
$$
for any $p \in \mathbb Z_+$.

Therefore, to calculate Lie algebra homology $H_\cdot(\mathfrak
n_x,V)$ we have to calculate $H_\cdot(\mathfrak n_x,V)_{(w \lambda +
  \rho)}$ for all $w \in W$. In this section, we prove a
formula for this calculation (Theorem \ref{formula} below).

 For an abelian category $\mathcal A$ we denote by $D : \mathcal A
 \longrightarrow D^b(\mathcal A)$ the functor which sends an object
 $A$ of $\mathcal A$ into the complex $D(A)$ which is $A$ in degree
 $0$ and 0 in other degrees.

 Let $\mathcal O_X$ be the sheaf of regular functions on $X$ and
 $\mathcal O_{X,x}$ the stalk of $\mathcal O_X$ at $x \in X$.  Let
 $\mathcal V$ be an $\mathcal O_X$-module. For $x \in X$, we denote by
 $\mathcal V_x$ the stalk of $\mathcal V$ at $x$. Also, we denote by
 $\mathfrak m_x$ the maximal ideal in $\mathcal O_{X,x}$ consisting of
 germs vanishing at $x$. The {\em geometric fiber} $T_x(\mathcal V)$
 of $\mathcal V$ is $\mathcal O_{X,x}/{\mathfrak m_x} \otimes
 _{\mathcal O_{X,x}} \mathcal V_x$.

First, we observe that the above components of $\mathfrak
n_x$-homology are related to derived geometric fibers of derived
localizations. We have the following formula
$$
H_p(\mathfrak n_x, V)_{(\mu + \rho)} = H^{-p}(LT_x(L\Delta_\mu(D(V)))) 
$$ for any $\mu \in \theta$ and $p \in \mathbb Z_+$ \cite[Ch.~III,
  2.6]{book}.  Combining this with Corollary \ref{locdet}, we see that
for regular antidominant $\lambda$,
$$
H_p(\mathfrak n_x, V)_{(w\lambda + \rho)} =
 H^{-p}(LT_x(LI_w(D(\Delta_\lambda(V)))))
$$ 
for any $\mathcal U_\theta$-module $V$. 

Let $i_x : \{x\} \longrightarrow X$ be the canonical inclusion. Then
the $\mathcal D$-module inverse image functor $Li_x^+$ is equal to $LT_x$.

It follows that 
$$
(LT_x \circ LI_w)(\mathcal V^\cdot) 
= (Li_x^+ \circ LI_w)(\mathcal V^\cdot)
= (Li_x^+ \circ p_{1+})(\mathcal T_w \otimes_{\mathcal O_{Z_w}} p_2^+(\mathcal V^\cdot))
$$
for any bounded complex $\mathcal V^\cdot$ in $D^b(\mathcal D_\lambda)$.

Moreover, $Y = p_1^{-1}(\{x\})$ is a closed
$\ell(w)$-dimensional affine subspace in $Z_w$. The codimension of $Y$
in $Z_w$ is $\dim X$.

Denote by $j : Y \longrightarrow Z_w$ the natural inclusion. Let $q_i
: Y \longrightarrow X$ the restriction of $p_i : Z_w \longrightarrow
X$ to $Y$ for $i = 1,2$. Then we have the commutative diagram
$$
\begin{CD}
Y @>{j}>> Z_w\\
@V{q_1}VV       @VV{p_1}V\\
\{x\} @>>{i_x}> X
\end{CD} \quad \ .
$$
Hence, $Y = \{x\} \times_X Z_w$. Then, by base change 
(\cite[Ch.~6, 8.4]{bdm}, \cite[Ch.~IV, Thm.~10.2]{dmod}), we have
$$
Li_x^+ \circ p_{1+} = q_{1+} \circ L j^+.
$$
This in turn implies that
\begin{align*}
(LT_x \circ LI_w)(\mathcal V^\cdot) & = (q_{1+} \circ Lj^+)(\mathcal T_w \otimes_{\mathcal O_{Z_w}} 
p_2^+(\mathcal V^\cdot))
= q_{1+}(j^*(\mathcal T_w)\otimes_{\mathcal O_Y} Lj^+(p_2^+(\mathcal V^\cdot))) \\
& = q_{1+}(j^*(\mathcal T_w)\otimes_{\mathcal O_Y} 
L(p_2 \circ j)^+(\mathcal V^\cdot))) = q_{1+}(j^*(\mathcal T_w)\otimes_{\mathcal O_Y} 
Lq_2^+(\mathcal V^\cdot)))
\end{align*} 
since $p_2^+$ is an exact functor.

\begin{lem}
We have
$$
j^*(\mathcal T_w) = \mathcal O_Y.
$$
\end{lem}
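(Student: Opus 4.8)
The claim is that the restriction of the line bundle $\mathcal T_w = p_1^*(\mathcal O(\rho - w\rho))$ to the fiber $Y = p_1^{-1}(\{x\})$ is trivial. The plan is simply to observe that $\mathcal T_w$ is a pullback along $p_1$, and $Y$ is by definition a fiber of $p_1$, so $p_1 \circ j$ is the constant map from $Y$ to the point $x \in X$.

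First I would record the factorization $p_1 \circ j = i_x \circ q_1$, which is exactly the commutativity of the square displayed just before the lemma (here $i_x : \{x\} \to X$ is the inclusion of the point and $q_1 : Y \to \{x\}$ the structure map of $Y$). Therefore
$$
j^*(\mathcal T_w) = j^* p_1^*(\mathcal O(\rho - w\rho)) = (p_1 \circ j)^*(\mathcal O(\rho - w\rho)) = q_1^* i_x^*(\mathcal O(\rho - w\rho)).
$$
Now $i_x^*(\mathcal O(\rho - w\rho))$ is the geometric fiber at $x$ of a line bundle on $X$, hence a one-dimensional vector space, i.e.\ $i_x^*(\mathcal O(\rho-w\rho)) \cong \mathcal O_{\{x\}}$ as a sheaf on the point. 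Pulling back the trivial sheaf on a point along $q_1$ yields $q_1^*(\mathcal O_{\{x\}}) = \mathcal O_Y$, which gives the asserted isomorphism $j^*(\mathcal T_w) = \mathcal O_Y$.

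There is essentially no obstacle here: the content is purely formal functoriality of $\mathcal O_X$-module inverse image together with the definition of $Y$ as $p_1^{-1}(\{x\}) = \{x\} \times_X Z_w$. The only point that deserves a word of care is that the isomorphism should be taken to be one of $\mathcal O_Y$-modules, so that in the chain of identities for $(LT_x \circ LI_w)(\mathcal V^\cdot)$ on the previous page one may legitimately drop the tensor factor $j^*(\mathcal T_w)$; this is immediate since any line bundle pulled back from a point is canonically trivialized once one trivializes it at that single point, and the resulting trivialization is global on $Y$. Thus the lemma follows, and as an immediate consequence $(LT_x \circ LI_w)(\mathcal V^\cdot) = q_{1+}(Lq_2^+(\mathcal V^\cdot))$.
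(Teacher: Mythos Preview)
Your argument is correct, and it is genuinely different from the paper's. You exploit the explicit description $\mathcal T_w = p_1^*(\mathcal O(\rho - w\rho))$ together with the commutative square $p_1 \circ j = i_x \circ q_1$: since $Y$ is a fiber of $p_1$, the restriction of any $p_1$-pullback to $Y$ factors through a point and is therefore trivial. The paper instead argues via equivariance: it observes that $Y$ is an $N_x$-orbit in $Z_w$, that $j^*(\mathcal T_w)$ is an $N_x$-equivariant invertible $\mathcal O_Y$-module, and that because $N_x$ is unipotent any such equivariant line bundle on a homogeneous space is trivial (the stabilizer, being unipotent, has no nontrivial characters). Your route is shorter and uses only functoriality of $\mathcal O$-module inverse image; the paper's route is more structural and would still apply if $\mathcal T_w$ were only given abstractly as an $\Int(\mathfrak g)$-equivariant line bundle rather than explicitly as a $p_1$-pullback. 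Either way the conclusion $(LT_x \circ LI_w)(\mathcal V^\cdot) = q_{1+}(Lq_2^+(\mathcal V^\cdot))$ follows immediately.
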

\begin{proof}
  Let $N_x$ be the unipotent subgroup of $\Int(\mathfrak g)$
  corresponding to $\mathfrak n_x$. Clearly, $Y$ is an $N_x$-orbit in
  $Z_w$ under the restriction of the action of $\Int(\mathfrak g)$ on
  $X \times X$.  Also, $j^*(\mathcal T_w)$ is an $N_x$-equivariant
  invertible $\mathcal O_Y$-module. Since $N_x$ is unipotent, it
  follows that it is isomorphic to $\mathcal O_Y$.
\end{proof}

Hence we finally get
$$
(LT_x \circ LI_w)(\mathcal V^\cdot) = q_{1+}(Lq_2^+(\mathcal V^\cdot))).
$$

Let $B_x$ be the Borel subgroup of $\Int(\mathfrak g)$ corresponding
to $x$.  Then $B_x$ acts on $X$ and its orbits are the Bruhat cells
$C(w)$, $w \in W$.  Let $i_w : C(w) \longrightarrow X$ be the natural
inclusion. Then the map $ k : y \longmapsto (x,y)$ is an isomorphism
of $C(w)$ onto $Y$. Moreover, we have $q_2 \circ k = p_2 \circ j \circ
k = i_w$. It follows that
$$
k^+(Lq_2^+(\mathcal V^\cdot))) = L(q_2 \circ k)^+(\mathcal V^\cdot) = 
Li_w^+(\mathcal V^\cdot)
$$
and
$$
Lq_2^+(\mathcal V^\cdot) = k_+(Li_w^+(\mathcal V^\cdot)).
$$
Hence
$$
(LT_x \circ LI_w)(\mathcal V^\cdot) = q_{1+} (k_+(Li_w^+(\mathcal V^\cdot))) =
(q_1 \circ k)^+(Li_w^+(\mathcal V^\cdot)).
$$
The map $q_1 \circ k$ is a map of $C(w)$ into the point $x$. If we
denote this map by $\pi_w : C(w) \longrightarrow \{pt\}$, we finally
get the following formula
$$
(LT_x \circ LI_w)(\mathcal V^\cdot) = \pi_{w,+}(Li_w^+(\mathcal V^\cdot)).
$$
Putting this together with the first part of the calculation, we
get the following result.

\begin{thm}
  \label{formula}
  Let $\lambda \in \theta$ be antidominant and regular. Then for any
  $x \in X$, $w \in W$, and $\mathcal U_\theta$-module $V$, we have
  $$
H_p(\mathfrak n_x,V)_{(w\lambda + \rho)} = H^{-p}(\pi_{w,+}(Li_w^+(D(\Delta_\lambda(V)))))
$$
for any $p \in \mathbb Z_+$.
\end{thm}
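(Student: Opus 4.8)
The plan is to read the statement off the chain of identities assembled in the discussion preceding it; almost all of the content already lies in that preparation, so the proof is a synthesis. First I would take as starting point the Beilinson--Bernstein formula $H_p(\mathfrak n_x, V)_{(\mu + \rho)} = H^{-p}(LT_x(L\Delta_\mu(D(V))))$, valid for any $\mu \in \theta$ and $p \in \mathbb Z_+$, and specialize it to $\mu = w\lambda$. This reduces the computation of the $(w\lambda + \rho)$-weight component of $\mathfrak n_x$-homology to understanding the derived geometric fiber at $x$ of the derived localization at $w\lambda$.

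Next I would invoke the hypothesis that $\lambda$ is regular and antidominant: by Corollary \ref{locdet} the functors $L\Delta_{w\lambda}$ and $LI_w \circ \Delta_\lambda$ are isomorphic, hence $L\Delta_{w\lambda}(D(V)) \cong LI_w(D(\Delta_\lambda(V)))$, and the right-hand side of the formula becomes $H^{-p}(LT_x(LI_w(D(\Delta_\lambda(V)))))$. The remaining task is therefore to identify the composite functor $LT_x \circ LI_w$ on $D^b(\mathcal D_\lambda)$.

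For this I would carry out the geometric unwinding sketched above: write $LT_x = Li_x^+$, insert the geometric description $LI_w(\mathcal V^\cdot) = p_{1+}(\mathcal T_w \otimes_{\mathcal O_{Z_w}} p_2^+(\mathcal V^\cdot))$, and apply base change to the Cartesian square with corner $Y = \{x\} \times_X Z_w$ to obtain $Li_x^+ \circ p_{1+} = q_{1+} \circ Lj^+$. Pushing $Lj^+$ through the tensor product---legitimate because $p_2^+$ is exact---leaves $q_{1+}(j^*(\mathcal T_w) \otimes_{\mathcal O_Y} Lq_2^+(\mathcal V^\cdot))$. The one nontrivial input here, and the step I regard as the crux, is the triviality $j^*(\mathcal T_w) \cong \mathcal O_Y$: since $Y$ is the fiber $p_1^{-1}(\{x\})$ it is a single orbit of the unipotent group $N_x$, and an $N_x$-equivariant invertible sheaf on an orbit of a unipotent group is trivial.

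Finally I would identify $Y$ with the Bruhat cell $C(w)$ via $y \mapsto (x,y)$; under this isomorphism $q_2$ becomes the inclusion $i_w : C(w) \hookrightarrow X$ while $q_1$ becomes the structure map $\pi_w : C(w) \to \{pt\}$, giving $(LT_x \circ LI_w)(\mathcal V^\cdot) = \pi_{w,+}(Li_w^+(\mathcal V^\cdot))$. Substituting $\mathcal V^\cdot = D(\Delta_\lambda(V))$ into this equality and combining with the first two steps yields $H_p(\mathfrak n_x,V)_{(w\lambda + \rho)} = H^{-p}(\pi_{w,+}(Li_w^+(D(\Delta_\lambda(V)))))$, as asserted. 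Apart from the base change identity and the triviality of $j^*(\mathcal T_w)$---the genuine geometric content---the argument is purely formal.
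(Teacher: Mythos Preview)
Your proposal is correct and follows essentially the same route as the paper: the argument there is exactly the chain you describe---Beilinson--Bernstein's fiber formula, Corollary~\ref{locdet}, base change along $Y = \{x\}\times_X Z_w$, the triviality of $j^*(\mathcal T_w)$ via $N_x$-equivariance, and the identification of $Y$ with $C(w)$. You have also correctly singled out the triviality of $j^*(\mathcal T_w)$ as the one step with genuine content, which the paper isolates as a separate lemma.
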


\section{Closed $K$-orbits in the equal rank case}
\label{closed_orbits}
Let $G_0$ be a connected semisimple Lie group with finite center. Let
$K_0$ be a maximal compact subgroup of $G_0$. In this section we
assume we are in the equal rank case; i.e., that $\rank G_0 = \rank
K_0$. Let $T_0$ be a maximal torus in $K_0$. Then $T_0$ is a compact
Cartan subgroup of $G_0$. Let $\mathfrak g$, $\mathfrak k$ and
$\mathfrak t$ be the complexified Lie algebras of $G_0$, $K_0$ and
$T_0$ respectively. Denote by $K$ the complexification of $K_0$. Then
the connected algebraic group $K$ acts naturally on the flag variety
$X$ of $\mathfrak g$. In this section we describe the structure of
closed $K$-orbits in $X$, and their stratification given by a Bruhat
stratification of $X$.

\subsection{Description of closed orbits}
Denote by $\sigma$ the Cartan involution of $\mathfrak g$ determined
by $\mathfrak k$. Then $\sigma$ fixes $\mathfrak k$. Therefore, it
also fixes $\mathfrak t$. Let $R$ be the root system in $\mathfrak
t^*$ of the pair $(\mathfrak g, \mathfrak t)$. Let $\alpha \in R$, and
$\xi$ in the root subspace $\mathfrak g_\alpha$. Then for any $\eta
\in \mathfrak t$, we have
$$
\alpha(\eta) \sigma(\xi) = \sigma(\alpha(\eta) \xi) = \sigma([\eta, \xi]) =
[\eta, \sigma(\xi)].
$$ Hence $\sigma(\xi)$ is also in $\mathfrak g_\alpha$. In particular,
it has to be proportional to $\xi$. Hence, $\sigma$ is either $1$ or
$-1$ on $\mathfrak g_\alpha$. A root $\alpha \in R$ is {\em compact
  (imaginary)} in the first case, and {\em noncompact (imaginary)} in
the second case. Denote by $R_c \subset R$ the set of all compact
roots. Since
$$
\mathfrak k = \mathfrak t \oplus \bigoplus_{\alpha \in R_c} \mathfrak g_\alpha,
$$
the set $R_c$ is naturally identified with the root system of
$(\mathfrak k,\mathfrak t)$. The complement $R - R_c = R_n$ is the set
of all noncompact roots.

It is well known that the group $K$ acts on $X$ with finitely many
orbits (see, for example, \cite[Ch. IV, Prop.~2.2]{book}). Since we
want to describe the $K$-orbits in the flag variety $X$ of $\mathfrak
g$, without any loss of generality we can assume that $K$ is a
subgroup of the group $\Int(\mathfrak g)$.

Let $\Omega$ be the subvariety of all $\sigma$-stable Borel
subalgebras in $X$.  Clearly, $\Omega$ is a union of $K$-orbits. More
precisely, by \cite[Lem.~6.16]{hmsw2}, $\Omega$ is the union of all
closed $K$-orbits in $X$. Hence, the closed $K$-orbits are the
connected components of $\Omega$.

Fix a closed $K$-orbit $Q$ in $X$ and a point $x \in Q$. Let $S_x$ be
the stabilizer of $x$ in $K$. Its Lie algebra $\mathfrak s_x$ is equal
to $\mathfrak k \cap \mathfrak b_x$. Hence, it is a solvable Lie
algebra. On the other hand, since $Q$ is a closed $K$-orbit, $Q$ is a
projective variety. Hence $S_x$ must be a parabolic subgroup of
$K$. It follows that $S_x$ is a Borel subgroup of $K$ and $\mathfrak
s_x = \mathfrak k \cap \mathfrak b_x$ is a Borel subalgebra of
$\mathfrak k$. Therefore the orbit map $K \ni k \longmapsto k \cdot x
\in Q$ factors through the flag variety $X_K$ of $\mathfrak k$. The
induced map $X_K \longrightarrow Q$ is an isomorphism which is the
inverse of the map $Q \ni y \longmapsto \mathfrak k \cap \mathfrak b_y
\in X_K$.

We have proved the following result.

\begin{thm}
  \label{closed}
  \begin{enumerate}
    \item[(i)] The variety $\Omega$ is the disjoint union of all
      closed $K$-orbits in the flag variety $X$.
    \item[(ii)] Let $Q$ be a closed $K$-orbit in $X$. Then the map
      $\mathfrak b_x \longmapsto \mathfrak b_x \cap \mathfrak k$
      defines a $K$-equivariant isomorphism of $Q$ with the flag
        variety $X_K$ of $\mathfrak k$.
     \end{enumerate}
\end{thm}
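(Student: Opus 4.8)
The plan is to reprise the argument assembled just before the statement, supplying the two standard structural facts it rests on.

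\emph{Part (i).} First I would note that since $K$ fixes $\mathfrak k$ pointwise it commutes with the Cartan involution $\sigma$, so $K$ carries $\sigma$-stable Borel subalgebras to $\sigma$-stable Borel subalgebras and $\Omega$ is $K$-stable; moreover the condition $\sigma(\mathfrak b_x) = \mathfrak b_x$ is Zariski-closed on $X$, so $\Omega$ is a closed subvariety and hence a finite (disjoint) union of $K$-orbits. The substantive point — that every $K$-orbit meeting $\Omega$ is already closed, and conversely that every closed $K$-orbit is contained in $\Omega$ — is exactly \cite[Lem.~6.16]{hmsw2}, which I would simply invoke. Granting this, $\Omega$ is the disjoint union of the closed $K$-orbits, and (as a finite disjoint union of closed sets, each an orbit of the connected group $K$) these closed orbits are precisely the connected components of $\Omega$.

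\emph{Part (ii).} Fix a closed $K$-orbit $Q$ and a point $x \in Q$. Since we may assume $K \subseteq \Int(\mathfrak g)$, the stabilizer $S_x$ of $x$ in $K$ is $K \cap B_x$, with Lie algebra $\mathfrak s_x = \mathfrak k \cap \mathfrak b_x$; this is solvable, being contained in the Borel subalgebra $\mathfrak b_x$. Because $Q$ is closed in the projective variety $X$ it is itself projective, so $K/S_x$ is complete and $S_x$ is a parabolic subgroup of $K$. A parabolic subgroup with solvable Lie algebra is a Borel subgroup (its Lie algebra, lying in a Borel subalgebra of $\mathfrak k$ and being solvable, equals that Borel subalgebra by maximality, forcing equality of the connected groups), so $S_x$ is a Borel subgroup of $K$ and $\mathfrak k \cap \mathfrak b_x$ a Borel subalgebra of $\mathfrak k$. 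Hence the orbit map $K \ni k \longmapsto k\cdot x \in Q$ factors through $X_K = K/S_x$, giving a $K$-equivariant morphism $X_K \longrightarrow Q$. By $K$-equivariance $\mathfrak k \cap \mathfrak b_y$ is a Borel subalgebra of $\mathfrak k$ for every $y \in Q$, so $y \longmapsto \mathfrak k \cap \mathfrak b_y$ is a well-defined $K$-equivariant morphism $Q \longrightarrow X_K$, and the two maps are mutually inverse; in particular they are isomorphisms.

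\emph{Where the difficulty lies.} The only genuinely non-formal ingredient is \cite[Lem.~6.16]{hmsw2} in part (i): the identification of $\Omega$ with the union of \emph{closed} orbits, rather than merely with some union of orbits, is the point that requires real work and that I would quote rather than reprove. Everything else is the standard dictionary between closed orbits of a connected group in a projective variety and parabolic stabilizers. In part (ii) the one place needing a word beyond bookkeeping is the passage from "bijective equivariant morphism" to "isomorphism," which here is handled cleanly by writing down the explicit inverse $y \longmapsto \mathfrak k \cap \mathfrak b_y$ (alternatively one could invoke smoothness of both varieties in characteristic zero).
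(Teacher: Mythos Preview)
Your proposal is correct and follows essentially the same approach as the paper: invoke \cite[Lem.~6.16]{hmsw2} for part~(i), and for part~(ii) argue that the stabilizer of a point in a closed orbit is parabolic with solvable Lie algebra, hence Borel, so the orbit map factors through $X_K$ with explicit inverse $y \mapsto \mathfrak k \cap \mathfrak b_y$. You have added a few justifying details (closedness of $\Omega$, why parabolic with solvable Lie algebra forces Borel) that the paper leaves implicit, but the argument is the same.
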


Now we remark that any closed $K$-orbit $Q$ contains a point $x$ such
that $\mathfrak b_x$ contains the Cartan subalgebra $\mathfrak t$. Let
$y$ be a point in $Q$. Then $\mathfrak b_y \cap \mathfrak k$ is a Borel
subalgebra of $\mathfrak k$. Let $\mathfrak c$ be a Cartan subalgebra
of $\mathfrak k$ contained in $\mathfrak b_y$. Then, since all Cartan
subalgebras of $\mathfrak k$ are $K$-conjugate, $\mathfrak c$ is
$K$-conjugate of $\mathfrak t$. It follows that a $K$-conjugate
$\mathfrak b_x$ of $\mathfrak b_y$ contains $\mathfrak t$ and $x
\in Q$.

Let $\mathfrak b$ be a Borel subalgebra of $\mathfrak k$ containing
$\mathfrak t$. Then any Borel subalgebra of $\mathfrak g$ containing
$\mathfrak b$ contains the Cartan subalgebra $\mathfrak t$ and it is
$\sigma$-stable. Therefore, it is in $\Omega$. Moreover, by Theorem
\ref{closed}, the number of closed $K$-orbits is equal to the
cardinality of the set $\{ x \in X \mid \mathfrak b_x \cap \mathfrak k
= \mathfrak b \}$. This number is equal to the number of sets of
positive roots $R^+$ in $R$ which contain a fixed set of positive
roots $R_c^+$ in $R_c$. It follows that it is equal to the number of
Weyl chambers of $R$ contained in a fixed Weyl chamber of $R_c$.  Let
$W_K$ be the Weyl group of $\mathfrak k$. By specialization at $x$, it
can be identified with a subgroup of $W$. This immediately implies
that the number of closed $K$-orbits is equal to $\Card (W/W_K) =
\Card(W)/\Card(W_K)$.

\begin{pro}
  \label{count_closed}
The number of closed $K$-orbits in $X$ is equal to $\Card(W/W_K)$.
\end{pro}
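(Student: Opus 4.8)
The plan is to reduce the enumeration of closed $K$-orbits to an enumeration of Weyl chambers, using Theorem \ref{closed} together with the observation already recorded above that a Borel subalgebra of $\mathfrak g$ containing $\mathfrak t$ is automatically $\sigma$-stable. First I fix once and for all a Borel subalgebra $\mathfrak b$ of $\mathfrak k$ containing $\mathfrak t$; equivalently a positive system $R_c^+$ of $R_c$, equivalently a Weyl chamber $\bar C_0$ of $R_c$ in $\mathfrak t^*$. I claim the assignment $Q \mapsto x_Q$, where $x_Q$ is the unique point of $Q$ with $\mathfrak b_{x_Q} \cap \mathfrak k = \mathfrak b$, is a bijection from the set of closed $K$-orbits onto $F := \{x \in X : \mathfrak b_x \cap \mathfrak k = \mathfrak b\}$. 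Indeed, existence and uniqueness of $x_Q$ follow from Theorem \ref{closed}(ii), since $y \mapsto \mathfrak b_y \cap \mathfrak k$ is an isomorphism of $Q$ onto $X_K$ and $\mathfrak b$ is a point of $X_K$; injectivity of $Q \mapsto x_Q$ is clear because distinct closed orbits are disjoint; and surjectivity holds since $x \in F$ forces $\mathfrak t \subseteq \mathfrak b \subseteq \mathfrak b_x$, so $\mathfrak b_x$ is $\sigma$-stable, so $x \in \Omega$, so $x$ lies in a closed $K$-orbit by Theorem \ref{closed}(i), which is then the orbit mapping to $x$.

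Next I translate $F$ into combinatorial data. Every $x \in F$ satisfies $\mathfrak b_x \supseteq \mathfrak t$ (as $\mathfrak t \subseteq \mathfrak b \subseteq \mathfrak b_x$), hence $\mathfrak b_x = \mathfrak t \oplus \bigoplus_{\alpha \in R^+} \mathfrak g_\alpha$ for a unique positive system $R^+$ of $R$, and $x \mapsto R^+$ identifies $F$ with a set of positive systems of $R$. Because $\sigma$ preserves each root subspace $\mathfrak g_\alpha$, one has $\mathfrak b_x \cap \mathfrak k = \mathfrak t \oplus \bigoplus_{\alpha \in R^+ \cap R_c} \mathfrak g_\alpha$, so the condition $\mathfrak b_x \cap \mathfrak k = \mathfrak b$ is precisely $R^+ \cap R_c = R_c^+$. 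Passing to chambers, $F$ is thereby identified with the set of Weyl chambers of $R$ contained in $\bar C_0$: a chamber of $R$ lies in $\bar C_0$ iff every root of $R_c^+$ is positive on it, iff its positive system meets $R_c$ in exactly $R_c^+$.

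Finally I count those chambers. Via specialization at a point of $F$, identify $W = W(\Sigma)$ with $W(R)$ acting on $\mathfrak t^*$; the reflections in the compact roots then generate the subgroup $W(R_c) = W_K$, so $W_K$ is a subgroup of $W$. The group $W(R)$ permutes the chambers of $R$ simply transitively, so there are $\Card(W)$ of them. The ``coarsening'' map sending a chamber of $R$ to the unique chamber of $R_c$ containing it is $W_K$-equivariant, and $W_K$ permutes the chambers of $R_c$ simply transitively, so there are $\Card(W_K)$ of those. Since $W_K$ acts transitively on the target, all fibers of the coarsening map have the same size, which must therefore be $\Card(W)/\Card(W_K)$; this common value is the number of chambers of $R$ inside $\bar C_0$, hence $\Card F$, hence, by the first paragraph, the number of closed $K$-orbits, which is thus $\Card(W)/\Card(W_K) = \Card(W/W_K)$.

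I do not expect a serious obstacle here, since the substantive geometry has been packaged into Theorem \ref{closed}. The two points that want a little care are the surjectivity in the first paragraph — which is exactly where Theorem \ref{closed}(ii), through the identification of $Q$ with $X_K$, is used — and, in the last paragraph, the fact that the fibers of the coarsening map are equinumerous because $W_K$ acts transitively on the $R_c$-chambers, rather than through any free action of $W_K$ on the $R$-chambers.
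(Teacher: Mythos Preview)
Your argument is correct and follows essentially the same route as the paper: fix a Borel $\mathfrak b$ of $\mathfrak k$ containing $\mathfrak t$, use Theorem~\ref{closed} to biject closed $K$-orbits with $\{x\in X:\mathfrak b_x\cap\mathfrak k=\mathfrak b\}$, translate this into positive systems $R^+\supset R_c^+$ (equivalently, Weyl chambers of $R$ inside a fixed chamber of $R_c$), and count via $\Card(W)/\Card(W_K)$. You have simply made explicit the steps the paper leaves to the reader, in particular the bijection $Q\mapsto x_Q$ and the equinumerosity of the fibers of the coarsening map.
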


\subsection{A stratification of closed $K$-orbits}
Let $\mathfrak b$ be a Borel subalgebra containing $\mathfrak t$.
Then it is $\sigma$-stable and therefore determines a point in a
closed $K$-orbit in $X$. Moreover, $\mathfrak b \cap \mathfrak k$ is a
Borel subalgebra of $\mathfrak k$ containing $\mathfrak t$. We
consider the specialization $\mathfrak h^* \longrightarrow \mathfrak
t^*$ determined by $\mathfrak b$. This defines an isomorphism of $W$
with the Weyl group of the root system $R$ in $\mathfrak
t^*$. Moreover, it identifies $W_K$ with a subgroup of $W$ generated
by reflections with respect to compact roots. The set of positive roots
$\Sigma^+$ determines a set of compact positive roots. This set
determines a set of simple roots in the root system of compact roots.
Corresponding reflections generate $W_K$ and define the length
function $\ell_K$ on $W_K$.

Let $B$ be the Borel subgroup of $\Int(\mathfrak g)$ corresponding to
$\mathfrak b$. Denote by $C(w)$ the $B$-orbit in $X$ corresponding the
element $w \in W$. Let $B_K$ be the Borel subgroup of
$K$ corresponding to $\mathfrak b \cap \mathfrak k$. Then $B_K$
is a subgroup of $B$.

We denote by $C_K(w)$, $w \in W_K$, the Bruhat cells in the flag
variety $X_K$ of $\mathfrak k$ corresponding to the action of
$B_K$. As we proved in Theorem \ref{closed}.(ii), for any closed $K$-orbit
$Q$, the map $x \longmapsto \mathfrak b_x \cap \mathfrak k$ is a
$K$-equivariant isomorphism of $Q$ onto $X_K$. Clearly, for any $w \in
W_K$, it induces by restriction an isomorphism of a $B_K$-orbit
$D_Q(w)$ in $Q$ onto $C_K(w)$. Therefore, $Q$ is the union of
$\Card(W_K)$ $B_K$-orbits. By Theorem \ref{closed}.(i) and Proposition
\ref{count_closed}, we see that $\Omega$ is the union of $\Card(W)$
$B_K$-orbits. Clearly, the Borel subalgebras of $\mathfrak g$
containing $\mathfrak t$ represent all Bruhat cells $C(w)$, $w \in W$,
in $X$ with respect to the action of $B$. Therefore, the intersection
$\Omega \cap C(w)$ is nonempty for any $w \in W$.  Moreover, for any
$w \in W$, the intersection $\Omega \cap C(w)$ is $B_K$-invariant and
therefore a union of $B_K$-orbits. This in turn implies that $\Omega
\cap C(w)$ is a $B_K$-orbit for any $w \in W$.

Let $Q$ be a closed $K$-orbit. By the above discussion and Theorem
\ref{closed}, we see that there exists a unique point $x \in Q$ which
is fixed by $B_K$. Therefore, there exists a unique $u \in W$ such that
$Q \cap C(u) = \{x\}$. We summarize this discussion in the following
lemma.

\begin{lem}
  \label{clorb_strat}
  Let $Q$ be a closed $K$-orbit in $X$.
 \begin{enumerate}
\item[(i)]
  There is a unique $u \in
  W$ such that $Q \cap C(u)$ is a point. 
\item[(ii)]
  The orbit $Q$ intersects Bruhat cell $C(v)$, $v \in W$, if
  and only if $v = w u$ for some $w \in W_K$.
\item[(iii)]
  The variety $Q \cap C(wu)$ is a $B_K$-orbit in $Q$ for
    any $w \in W_K$. More precisely, we have $D_Q(w) = Q \cap  C(wu)$
    for all $w \in W_K$.
  \item[(iv)] We have $\dim D_Q(w) = \ell_K(w)$.
  \end{enumerate} 
\end{lem}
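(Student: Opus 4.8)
The plan is to build on the discussion immediately preceding the lemma, which already proves (i): via Theorem \ref{closed}.(ii) the orbit $Q$ is $K$-equivariantly isomorphic to the flag variety $X_K$ of $\mathfrak k$, on which the Borel subgroup $B_K$ has a unique fixed point; transporting this back gives a unique $B_K$-fixed point $x \in Q$, which under the isomorphism $Q \cong X_K$, $y \mapsto \mathfrak b_y \cap \mathfrak k$, corresponds to the reference Borel, so that $\mathfrak b_x \cap \mathfrak k = \mathfrak b \cap \mathfrak k$ and hence $\mathfrak b_x \supseteq \mathfrak t$; thus $x$ is the unique point of its Bruhat cell whose Borel subalgebra contains $\mathfrak t$, and if $u \in W$ is the index of that cell then $Q \cap C(u) = B_K \cdot x = \{x\}$. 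Any other $v$ with $Q \cap C(v)$ a single point would have that point fixed by $B_K$ (since $B_K$ preserves $Q \cap C(v)$), hence equal to $x$, hence $v = u$. I also record, as in the discussion: for every $v \in W$ the Borel subalgebra $\mathfrak b_v \supseteq \mathfrak t$ lying in $C(v)$ belongs to $\Omega$, so $\Omega \cap C(v) = B_K \cdot \mathfrak b_v$ is a single $B_K$-orbit; being connected it lies in one connected component of $\Omega$, i.e. in one closed $K$-orbit, and since the closed $K$-orbits are disjoint we get, for a fixed $Q$, that $Q \cap C(v) \ne \emptyset$ iff $\mathfrak b_v \in Q$, and then $Q \cap C(v) = \Omega \cap C(v) = B_K \cdot \mathfrak b_v$ is a single $B_K$-orbit of $Q$. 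This gives the first clause of (iii); and (iv) is immediate, since Theorem \ref{closed}.(ii) carries $D_Q(w)$ isomorphically onto the Bruhat cell $C_K(w)$ of $X_K$, an affine space of dimension $\ell_K(w)$.

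For (ii) I would first note that $\{v \in W : \mathfrak b_v \in Q\}$ has exactly $\Card(W_K)$ elements: the sets $\Omega \cap C(v)$, $v \in W$, are pairwise disjoint and exhaust $\Omega$, hence are precisely the $B_K$-orbits of $\Omega$, and the ones contained in $Q$ are precisely the $B_K$-orbits of $Q$, of which there are $\Card(W_K)$ by Theorem \ref{closed}.(ii). Since $x = \mathfrak b_u \in Q$, the element $u$ lies in this set, so it suffices to prove $\mathfrak b_{wu} \in Q$ for every $w \in W_K$; then $W_K u$ is contained in this $\Card(W_K)$-element set and therefore equals it. For this I use that $W_K$, as a subgroup of $W$, is realized by $N_K(\mathfrak t) \subseteq N_{\Int(\mathfrak g)}(\mathfrak t)$: fixing a representative $n \in N_K(\mathfrak t)$ of $w$, and using the indexing convention in which $\mathfrak b_{w'v} = \widetilde{w'}(\mathfrak b_v)$ for compatible representatives, I get $\mathfrak b_{wu} = n(\mathfrak b_u) = n(x) \in K \cdot x = Q$. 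This is (ii), and combined with the first paragraph it also yields $Q \cap C(wu) = B_K \cdot \mathfrak b_{wu}$ for every $w \in W_K$, giving $\Card(W_K)$ distinct $B_K$-orbits of $Q$ — hence all of them.

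It remains to match them correctly, i.e. to show $D_Q(w) = Q \cap C(wu)$, which is the only genuinely new point. Under the isomorphism $Q \cong X_K$, $y \mapsto \mathfrak b_y \cap \mathfrak k$, of Theorem \ref{closed}.(ii), the orbit $D_Q(w)$ is carried onto $C_K(w)$, which (by the same indexing convention, now for $\mathfrak k$ and the same $n$) contains the point $n(\mathfrak b \cap \mathfrak k)$. Since $x$ is the $B_K$-fixed point of $Q$ we have $\mathfrak b_x \cap \mathfrak k = \mathfrak b \cap \mathfrak k$; and since $n \in K$ preserves $\mathfrak k$ and $\mathfrak b_{wu} = n(x)$, we get $\mathfrak b_{wu} \cap \mathfrak k = n(\mathfrak b_x \cap \mathfrak k) = n(\mathfrak b \cap \mathfrak k)$. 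Hence $\mathfrak b_{wu}$ is the point of $Q$ sent to the distinguished point of $C_K(w)$, so $\mathfrak b_{wu} \in D_Q(w)$; since $\mathfrak b_{wu}$ also lies in $Q \cap C(wu) = B_K \cdot \mathfrak b_{wu}$, and both $D_Q(w)$ and $Q \cap C(wu)$ are single $B_K$-orbits of $Q$, they coincide, and $\dim D_Q(w) = \ell_K(w)$ follows as noted. The main obstacle is exactly this identification — everything else is bookkeeping with connectedness and cardinalities — and the one place care is needed is the convention for indexing Bruhat cells by $W$, which is precisely what makes the answer the left coset $W_K u$ rather than a right coset.
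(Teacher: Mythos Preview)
Your argument is correct and follows the same counting approach as the paper's discussion preceding the lemma: both establish that each $\Omega \cap C(v)$ is a single $B_K$-orbit and use the unique $B_K$-fixed point of $Q$ to pin down $u$. The paper simply states the lemma as a ``summary'' of that discussion, leaving (ii) and the precise matching $D_Q(w) = Q \cap C(wu)$ in (iii) implicit; you supply these explicitly via representatives $n \in N_K(\mathfrak t)$ and the equivariance of the isomorphism $Q \cong X_K$, which is exactly the right way to close the gap, and your caution about the indexing convention (left coset $W_K u$) is well placed.
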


\section{Calculation of $\mathfrak n$-homology for the discrete series}
\label{nhomology}
Using Theorem \ref{formula}, we can prove a result of Wilfried Schmid on
$\mathfrak n$-homology of discrete series representations
\cite{l2coho}. To illustrate the simplicity of our argument, we first
treat the special case of irreducible finite-dimensional
representations of a connected compact semisimple Lie groups due to
Bertram Kostant \cite{kostant}. The method of the proof in both cases
is essentially identical.

\subsection{Kostant's theorem}
Let $\mathfrak b$ be a Borel subalgebra of $\mathfrak g$. Let
$\mathfrak n$ be the nilpotent radical of $\mathfrak b$. Let $F$ be an
irreducible finite-dimensional representation with lowest weight
$\lambda$. Then, by the Borel-Weil theorem, we have $F = \Gamma(X, \mathcal
O(\lambda))$ \cite[Ch.~II, Thm. 5.1]{book}. Moreover, $\mathcal
O(\lambda)$ is a $\mathcal D_{\lambda - \rho}$-module. As we remarked
before, since $\lambda - \rho$ is regular, we have
$$
H_p(\mathfrak n,F) 
= \bigoplus_{w \in W} H_p(\mathfrak n, F)_{(w(\lambda - \rho)+\rho)}.
$$ 
Moreover, we have $\Delta_{\lambda - \rho}(F) = \mathcal
O(\lambda)$.  Therefore, it follows that $Li_w^+(D(\mathcal O(\lambda)))
= D(\mathcal O_{C(w)})$. 
This implies that 
$$ 
H_p(\mathfrak n, F)_{(w(\lambda - \rho)+\rho)} =
H^{-p}(\pi_{w,+}(D(\mathcal O_{C(w)}))).
$$

In addition, we recall the following simple result.
\begin{lem}
\label{deRham}
Let $Y$ be an affine space. Let $\pi : Y \longrightarrow \{ pt\}$ be
the map of $Y$ into the point. Then we have
$$
\pi_+(D(\mathcal O_Y)) = D(\mathbb C)[\dim Y].
$$
\end{lem}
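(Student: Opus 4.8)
The plan is to compute the $\mathcal{D}$-module direct image $\pi_+$ for the map to a point directly, using the fact that it is computed by (a suitable shift of) the algebraic de Rham complex. Since both sides of the asserted identity are invariant under isomorphisms of $Y$, I would fix an isomorphism of $Y$ with $\mathbb{A}^n$, where $n = \dim Y$, and assume $Y = \mathbb{A}^n$ with coordinate ring $\mathbb{C}[t_1,\dots,t_n]$. In the conventions used above (see \cite{dmod}, \cite{book}), $\pi_+(D(\mathcal{O}_Y))$ is represented by the de Rham complex $\Omega_Y^\bullet$ of $Y$, with $\Omega_Y^i$ placed in cohomological degree $i - n$ and differential the usual de Rham differential; in particular this complex lives in degrees $-n$ through $0$.

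The heart of the matter is then the algebraic Poincar\'e lemma: the de Rham cohomology of $\mathbb{A}^n$ is $\mathbb{C}$ in degree $0$ and vanishes otherwise. For $n = 1$ this is immediate, since the two-term complex with $\mathbb{C}[t]$ in degrees $-1$ and $0$ and differential $d/dt$ has kernel the constants and is surjective because every polynomial has a polynomial primitive. The general case follows by induction on $n$, either by invoking the K\"unneth formula for direct images (compatibility of $\pi_+$ with exterior tensor products) or by presenting the de Rham complex of $\mathbb{A}^n$ as the total complex of $n$ copies of the $\mathbb{A}^1$-complex. Taking the degree shift by $n$ into account, this gives $H^{-n}(\pi_+(D(\mathcal{O}_Y))) \cong \mathbb{C}$ and $H^p(\pi_+(D(\mathcal{O}_Y))) = 0$ for $p \neq -n$, that is, $\pi_+(D(\mathcal{O}_Y)) = D(\mathbb{C})[\dim Y]$, where $\mathbb{C}$ denotes the unique one-dimensional $\mathcal{D}$-module on a point.

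There is no genuine obstacle here; the only delicate point is the bookkeeping with the normalization of the direct-image functor, so that the single surviving cohomology group ends up in degree $-\dim Y$ rather than in degree $0$ or $+\dim Y$. It is precisely this shift that produces the factor $[\dim Y]$, and it is exactly what is needed so that the subsequent application (to $Y = C(w)$, with $\dim C(w) = \ell(w)$) reproduces Kostant's degree-$\ell(w)$ statement for $\mathfrak{n}$-homology.
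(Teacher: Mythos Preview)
Your proposal is correct and is essentially what the paper does: the paper's proof is simply a citation to \cite[Ch.~I, Thm.~11.2 and Lem.~11.4]{dmod}, which are precisely the statements that $\pi_+$ is computed by the (shifted) de Rham complex and the algebraic Poincar\'e lemma for affine space that you have spelled out. Your only addition is unpacking the reference, and your remark about the degree conventions is on point.
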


\begin{proof}
This follows from \cite[Ch.~I, Thm.~11.2 and Lem. 11.4]{dmod}.
\end{proof}

This implies that $H_p(\mathfrak n, F)_{(w(\lambda - \rho)+\rho)} =
0$ if $p \ne \ell(w)$ and $H_{\ell(w)}(\mathfrak n,F)_{(w(\lambda -
  \rho) + \rho)} = \mathbb C$.

Putting this all together, we get
$$
H_p(\mathfrak n,F) = \bigoplus_{w \in W(p)} \mathbb C_{w(\lambda - \rho)+\rho}
$$
where $W(p)$ is the subset of $W$ consisting of all elements $w$ such that
$\ell(w) = p$. We have proven the following result.

\begin{thm}[Kostant]
  \label{kostant}
Let $F$ be an irreducible finite-dimensional representation with
lowest weight $\lambda$. Then
$$
H_p(\mathfrak n,F) = \bigoplus_{w \in W(p)} \mathbb C_{w(\lambda - \rho)+\rho}
$$
for $p \in \mathbb Z_+$.
\end{thm}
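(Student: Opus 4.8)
The plan is to feed the representation $F$ into Theorem \ref{formula}. First, note that $\lambda-\rho$ is regular and antidominant, which is immediate from $\lambda$ being the lowest weight of a finite-dimensional representation (so $\alpha\check{\ }(\lambda)\le 0$ for all $\alpha\in\Sigma^+$, hence $\alpha\check{\ }(\lambda-\rho)<0$). By the Borel--Weil theorem $F=\Gamma(X,\mathcal O(\lambda))$ and $\mathcal O(\lambda)$ is a $\mathcal D_{\lambda-\rho}$-module; since for regular antidominant $\lambda-\rho$ the functors $\Gamma(X,-)$ and $\Delta_{\lambda-\rho}$ are mutually inverse equivalences, this forces $\Delta_{\lambda-\rho}(F)=\mathcal O(\lambda)$. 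Applying Theorem \ref{formula} with $V=F$ and $\theta=W(\lambda-\rho)$ then reduces the problem to computing, for each $w\in W$,
$$
H_p(\mathfrak n,F)_{(w(\lambda-\rho)+\rho)}=H^{-p}\bigl(\pi_{w,+}\bigl(Li_w^+(D(\mathcal O(\lambda)))\bigr)\bigr).
$$

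Next I would evaluate the right-hand side for each $w$. Because $\mathcal O(\lambda)$ is $\mathcal O$-coherent --- an invertible sheaf with flat connection --- its $\mathcal D$-module inverse image to the smooth locally closed subvariety $C(w)$ is concentrated in degree $0$; and since $C(w)$ is an orbit of a unipotent group for which $\mathcal O(\lambda)$ is equivariant, this inverse image is isomorphic to $\mathcal O_{C(w)}$, just as $j^*(\mathcal T_w)$ was identified with $\mathcal O_Y$ in the lemma above. Thus $Li_w^+(D(\mathcal O(\lambda)))=D(\mathcal O_{C(w)})$. As $C(w)$ is an affine space of dimension $\ell(w)$, Lemma \ref{deRham} applied to $\pi_w:C(w)\longrightarrow\{pt\}$ gives $\pi_{w,+}(D(\mathcal O_{C(w)}))=D(\mathbb C)[\ell(w)]$, a complex concentrated in degree $-\ell(w)$. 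Taking $H^{-p}$, we get $H_p(\mathfrak n,F)_{(w(\lambda-\rho)+\rho)}=\mathbb C$ when $p=\ell(w)$ and $0$ otherwise.

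Finally I would reassemble the homology from the Casselman--Osborne decomposition $H_p(\mathfrak n,F)=\bigoplus_{w\in W}H_p(\mathfrak n,F)_{(w(\lambda-\rho)+\rho)}$: only the summands with $\ell(w)=p$ survive, each a one-dimensional $\mathfrak h$-module of weight $w(\lambda-\rho)+\rho$, and their sum is precisely $\bigoplus_{w\in W(p)}\mathbb C_{w(\lambda-\rho)+\rho}$. The only step that deserves genuine attention is the identification $Li_w^+(D(\mathcal O(\lambda)))=D(\mathcal O_{C(w)})$, i.e.\ that restricting the twisted $\mathcal D$-module $\mathcal O(\lambda)$ to a Bruhat cell produces no higher terms and yields the structure sheaf; everything else --- in particular the elimination of the twist $\mathcal T_w$ and the reduction of $\mathfrak n$-homology to direct images of restrictions to Bruhat cells --- has already been carried out in the proof of Theorem \ref{formula}, so what remains here is bookkeeping.
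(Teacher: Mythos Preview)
Your proposal is correct and follows essentially the same route as the paper: localize $F$ to $\mathcal O(\lambda)$ via Borel--Weil, apply Theorem~\ref{formula}, identify $Li_w^+(D(\mathcal O(\lambda)))$ with $D(\mathcal O_{C(w)})$, and finish with Lemma~\ref{deRham}. Your added justifications (antidominance of $\lambda-\rho$, the unipotent-orbit argument for trivializing $i_w^*\mathcal O(\lambda)$) are exactly the points the paper leaves implicit.
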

This is Kostant's result mentioned above.

\subsection{Schmid's result}
\label{schmidhomo}
Now we discuss a generalization of Kostant's result corresponding to
the $\mathfrak n$-homology of discrete series representations. This
result has been proved by Schmid \cite[Thm.~4.1]{l2coho}.

We first recall the geometric version of the classification of
discrete series representations of a connected semisimple Lie group
$G_0$ with finite center \cite{hmsw2}. First, the discrete series
representations exist if and only if the rank of $G_0$ is equal to the
rank $K_0$. In this situation, we follow the notation from the
introduction to the preceding section.

Let $V$ be the Harish-Chandra module of a discrete series representation
of $G_0$. Assume that $V$ is in $\mathcal M(\mathcal U_\theta)$. Then
$\theta$ is a regular $W$-orbit of a unique real strongly antidominant
$\lambda \in \mathfrak h^*$; i.e., $\alpha\check{\ }(\lambda) < 0$ for
any $\alpha$ in $\Sigma^+$ (see \cite[Thm.~12.4]{hmsw2}). Moreover,
there exists a unique closed $K$-orbit $Q$ and a unique irreducible
$K$-equivariant connection on $Q$ compatible with $\lambda+\rho$ such
that $V = \Gamma(X,\mathcal I(Q,\tau))$ \cite[Thm.~12.5]{hmsw2}.

Let $\mathfrak b$ be a $\sigma$-stable Borel subalgebra of $\mathfrak
g$. Let $\mathfrak n = [\mathfrak b,\mathfrak b]$.

Denote by $i_Q : Q \longrightarrow X$ the natural immersion of $Q$,
and, for any $v \in W$, by $i_v : C(v) \longrightarrow X$ the natural
immersion of the Bruhat cell (with respect to $\mathfrak b$) $C(v)$
into $X$.  Since $Q$ is the support of $\mathcal I(Q,\tau)$, the
restriction of $\mathcal I(Q,\tau)$ to the complement of $Q$ is
$0$. Therefore $Li_v^+(\mathcal I(Q,\tau)) = 0$ for any $v$ such that
$C(v) \cap Q = \emptyset$. Hence, in this case, $H_p(\mathfrak
n,\Gamma(X,\mathcal I(Q,\tau)))_{(v\lambda + \rho)} = 0$ for $p \in
\mathbb Z_+$, by Theorem \ref{formula}.

Let $v \in W$ be such that $Q \cap C(v) \ne \emptyset$.  As we
explained in Lemma \ref{clorb_strat}, there exists a unique $u \in W$
such that $Q \cap C(u)$ is a point.  Then $v = w u$, for some $w \in
W_K$, and $D_Q(w)$ is a smooth subvariety of $Q$. Denote by $a :
D_Q(w) \longrightarrow C(wu)$ and $b : D_Q(w) \longrightarrow Q$ the
natural immersions. Then we have the commutative diagram
$$
\begin{CD}
D_Q(w) @>{a}>> C(wu)\\
@V{b}VV       @VV{i_{wu}}V\\
Q @>>{i_Q}> X
\end{CD}
$$
i.e., $D_Q(w)$ is the fiber product $Q \otimes_X C(wu)$. By base change 
\cite[Ch.~IV, Thm.~10.2]{dmod}, we have
$$
Ri_{wu}^! \circ i_{Q,+} = a_+ \circ R b^!.
$$ 
Moreover, we have $Li_{wu}^+ = Ri_{wu}^![\dim X - \dim C(wu)]$ and
$Lb^+ = Rb^![\dim Q - \dim D_Q(w)]$.  Therefore, we have
\begin{align*}
Li_{wu}^+(D(\mathcal I(Q,\tau)))& = Li_{wu}^+(D(i_{Q,+}(\tau))) =
 Li_{wu}^+(i_{Q,+}(D(\tau))) \\
&= Ri_{wu}^!(i_{Q,+}(D(\tau)))[\dim X - \ell(wu)]\\
&= a_+(R b^!(D(\tau)))[\dim X - \ell(wu)] \\
&= a_+(Lb^+(D(\tau)))[\dim X - \ell(wu)][-\dim Q + \dim D_Q(w)]\\
&= a_+(Lb^+(D(\tau)))[\dim X - \dim X_K - \ell(wu) + \ell_K(w)]
\end{align*}
by Lemma \ref{clorb_strat}.(iv).
Since $\tau$ is a connection on $Q$, we have
$$
Li_{wu}^+(D(\mathcal I(Q,\tau))) 
= a_+(D(b^+(\tau)))[\dim X - \dim X_K - \ell(wu) + \ell_K(w)].
$$
Denote by $c$ the map of $D_Q(w)$ into a point $\{pt\}$.
Using Lemma \ref{deRham} this leads to
\begin{align*}
\pi_{w,+}(Li_{wu}^+(D(\mathcal I(Q,\tau& )))) 
= \pi_{w,+}(a_+(D(b^+(\tau)))[\dim X - \dim X_K - \ell(wu) + \ell_K(w)] \\ 
&= (\pi_w \circ a)_+(D(\mathcal O_{D_Q(w)}))[\dim X - \dim X_K - \ell(wu) 
+ \ell_K(w)] \\
&= c_+(D(\mathcal O_{D_Q(w)}))[\dim X - \dim X_K - \ell(wu) + \ell_K(w)] \\
&=
D(\mathbb C)[\dim X - \dim X_K - \ell(wu) + \ell_K(w)][\ell_K(w)] \\
&= D(\mathbb C)[\dim X - \dim X_K - \ell(wu) + 2 \ell_K(w)].
\end{align*}
Finally, $\dim X$ is equal to the number of all positive roots; i.e.,
to half of the number of all roots. Analogously, $\dim X_K$ is equal
to the half of the number of all compact roots. Hence, the difference
$\dim X - \dim X_K$ is equal to the half of the number of all
noncompact roots, i.e., $ \frac 1 2 \dim(\mathfrak g/\mathfrak k)$.

Applying again Theorem \ref{formula}, we have
\begin{align*}
H_p(\mathfrak n,\Gamma(X,\mathcal I(Q,\tau)))_{(wu\lambda + \rho)} & =
H^{-p}(\pi_{w,+}(Li_{wu}^+(D(\mathcal I(Q,\tau))))) \\
& = \begin{cases}
  0  & \text{ if } p \ne  \frac 1 2 \dim(\mathfrak g/\mathfrak k) - \ell(wu) +
  2 \ell_K(w) ; \\
  \mathbb C & \text{ if } p =  \frac 1 2 \dim(\mathfrak g/\mathfrak k) -
  \ell(wu) + 2 \ell_K(w)
\end{cases}
\end{align*}
for any $w \in W_K$.

This proves the following result.\footnote{Clearly, if $G_0$ is
compact, this specializes to Kostant's result.}

\begin{thm}[Schmid]
  \label{schmid}
  Let $V$ be a discrete series representation such that $V =
  \Gamma(X,\mathcal I(Q,\tau))$.

Then, if $v \notin W_Ku$,
$$
H_p(\mathfrak n,V)_{(v \lambda + \rho)} = 0,
$$ 
for all $p \in \mathbb Z_+$. Moreover, for $w \in W_K$, we have
$$
H_p(\mathfrak n,V)_{(wu\lambda + \rho)}
= \begin{cases}
  0  & \text{ if } p \ne  \frac 1 2 \dim(\mathfrak g/\mathfrak k) - \ell(wu) +
  2 \ell_K(w) ; \\
  \mathbb C & \text{ if } p =  \frac 1 2 \dim(\mathfrak g/\mathfrak k) -
  \ell(wu) + 2 \ell_K(w).
\end{cases}
$$
\end{thm}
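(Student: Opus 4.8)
The plan is to derive Theorem \ref{schmid} as a direct application of Theorem \ref{formula}, following exactly the template established in the proof of Kostant's Theorem \ref{kostant}: compute the restriction $Li_v^+(D(\Delta_\lambda(V)))$ of the localization of $V$ to each Bruhat cell $C(v)$, push it forward to a point via $\pi_{v,+}$, and read off the $\mathfrak n$-homology weight components. The crucial geometric input is the localization statement $\Delta_\lambda(V) = \mathcal I(Q,\tau) = i_{Q,+}(\tau)$, which holds because $\lambda$ is regular antidominant (in fact strongly antidominant), $\tau$ is a connection on the smooth closed orbit $Q$, and $i_Q$ is a closed immersion so $i_{Q,+}$ is exact. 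Since $Q = \supp \mathcal I(Q,\tau)$, the restriction to any cell $C(v)$ disjoint from $Q$ vanishes, immediately giving $H_p(\mathfrak n,V)_{(v\lambda+\rho)} = 0$ for $v \notin W_K u$ by Theorem \ref{formula}; this uses Lemma \ref{clorb_strat}(ii), which tells us precisely which cells meet $Q$.

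For the cells $C(wu)$, $w \in W_K$, that do meet $Q$, the core computation is to identify $Li_{wu}^+(D(i_{Q,+}(\tau)))$ via base change along the fiber-product square with corners $D_Q(w)$, $C(wu)$, $Q$, and $X$. First I would convert the inverse image $Li_{wu}^+$ to the exceptional inverse image $Ri_{wu}^!$ by the standard shift $Li_{wu}^+ = Ri_{wu}^![\dim X - \dim C(wu)]$ (here $\dim C(wu) = \ell(wu)$ since Bruhat cells for $\mathfrak b$ are affine spaces of that dimension); then apply base change $Ri_{wu}^! \circ i_{Q,+} = a_+ \circ Rb^!$ from \cite[Ch.~IV, Thm.~10.2]{dmod}; then convert $Rb^!$ back to $Lb^+$ via $Lb^+ = Rb^![\dim Q - \dim D_Q(w)]$, using $\dim Q = \dim X_K$ and $\dim D_Q(w) = \ell_K(w)$ from Lemma \ref{clorb_strat}(iv). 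Because $\tau$ is a connection (an $\mathcal O$-coherent $\mathcal D$-module) and $b$ is a locally closed immersion of smooth varieties, $Lb^+(D(\tau)) = D(b^+(\tau))$ is again concentrated in degree $0$, and $b^+(\tau)$ is a connection on the affine space $D_Q(w) \cong C_K(w)$, hence $\mathcal O_{D_Q(w)}$-locally free of rank $1$; for the homology computation only its rank matters, so after pushing to $C(wu)$ via the closed immersion $a_+$ and composing with $\pi_{w,+}$ we get $c_+(D(\mathcal O_{D_Q(w)}))$ shifted by the accumulated degree shift $\dim X - \dim X_K - \ell(wu) + \ell_K(w)$.

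The final step is to apply Lemma \ref{deRham} to the map $c : D_Q(w) \to \{pt\}$ from an affine space of dimension $\ell_K(w)$, yielding $c_+(D(\mathcal O_{D_Q(w)})) = D(\mathbb C)[\ell_K(w)]$, so that the total shift becomes $\dim X - \dim X_K - \ell(wu) + 2\ell_K(w)$. Identifying $\dim X - \dim X_K$ with $\tfrac{1}{2}\dim(\mathfrak g/\mathfrak k)$ (half the number of noncompact roots) and invoking Theorem \ref{formula} once more, we conclude $H_p(\mathfrak n,V)_{(wu\lambda+\rho)}$ is $\mathbb C$ when $p = \tfrac12\dim(\mathfrak g/\mathfrak k) - \ell(wu) + 2\ell_K(w)$ and $0$ otherwise. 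I anticipate the main obstacle is bookkeeping: correctly tracking all the degree shifts through the conversions between $L(-)^+$ and $R(-)^!$ and through base change, and verifying that the relevant immersions really do form a Cartesian square of smooth varieties (so base change applies) — in particular that $D_Q(w) = Q \cap C(wu)$ really is $Q \times_X C(wu)$ as a smooth subvariety, which is exactly the content of Lemma \ref{clorb_strat}(iii). The conceptual content — that the localization is a pushforward from $Q$ and $Q \cong X_K$ — is all already in place from Section \ref{closed_orbits}, so no new idea beyond Theorem \ref{formula} is needed.
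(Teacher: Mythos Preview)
Your proposal is correct and follows essentially the same argument as the paper's own proof in Section~\ref{schmidhomo}: the same support argument for $v\notin W_Ku$, the same fiber-product square with $D_Q(w)$, the same base change $Ri_{wu}^!\circ i_{Q,+}=a_+\circ Rb^!$, the same shift conversions between $L(-)^+$ and $R(-)^!$, and the same invocation of Lemma~\ref{deRham} to produce the final shift $\dim X-\dim X_K-\ell(wu)+2\ell_K(w)$. Even your anticipated bookkeeping obstacle is exactly where the paper spends its effort.
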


\subsection{Kostant's result and the BGG resolution}
\label{bgg}
In this section we interpret the calculation of $\mathfrak n$-homology
of finite-dimensional representations using the BGG resolution. This gives
us a more precise version of the result, we see that each cohomology
class corresponds to a Bruhat cell in $X$.

Let $\lambda$ be an antidominant weight. Then, the $\mathcal
D_{\lambda - \rho}$-module $\mathcal O(\lambda)$ can be represented in
$D^b(\mathcal D_{\lambda-\rho})$ by its Cousin resolution
corresponding to the stratification of $X$ by Bruhat cells $C(w)$ as
explained in Appendix \ref{cousinres}.

Let $C(w)$ be a Bruhat cell in $X$ and $i_w : C(w) \longrightarrow X$
the natural inclusion. The cell $C(w)$ admits unique irreducible
$N$-equivariant connection $\mathcal O_{C(w)}$. Its direct image
$\mathcal I(w,\lambda - \rho) = i_{w,+}(\mathcal O_{C(w)})$ is a
{\em standard} $\mathcal D_{\lambda-\rho}$-module attached to $C(w)$
\cite[Ch.~V, Sec.~1]{book}.

The Cousin resolution (see Theorem \ref{twcousin} in Appendix) is a complex
$\mathcal C^\cdot$ such that
$$
\mathcal C^p = \bigoplus_{w \in W(\dim X - p)} \mathcal I(w,\lambda - \rho)
$$
for any $p \in \mathbb Z_+$, with explicitly given differentials.

Clearly, there is a natural monomorphism from $\mathcal O(\lambda)$ into
the standard $\mathcal D_{\lambda-\rho}$-module $\mathcal
I(w_0,\lambda - \rho)$ attached to the open Bruhat cell $C(w_0)$
(where $w_0$ is the longest element of the Weyl group $W$).
Therefore, there is a natural morphism $\epsilon$ of $D(\mathcal
O(\lambda))$ into $\mathcal C^\cdot$ in $D^b(\mathcal D_{\lambda -
  \rho})$.  Theorem \ref{twcousin} says that $\epsilon$ is
an isomorphism.

Let $\theta$ be the Weyl group orbit of $\lambda - \rho$. Since the
functor $\Gamma$ is exact for antidominant $\lambda$, $C^\cdot =
\Gamma(X,\mathcal C^\cdot)$ is isomorphic to $D(F)$ in $D^b(\mathcal
U_\theta)$ where $F$ is the finite-dimensional $\mathcal U(\mathfrak
g)$-module with lowest weight $\lambda$; i.e., we get a resolution of
$F$ by modules $C^p$, $p \in \mathbb Z_+$. By \cite[Ch.~V,
  1.14]{book}, we know that
$$
C^p = \bigoplus_{w \in W(\dim X - p)} I(w(\lambda - \rho))
$$ 
for all $p \in \mathbb Z_+$. Here $I(\mu)$ are the {\em duals} of the
Verma modules $M(\mu)$ in the category of highest weight modules.
This is the {\em BGG resolution} of $F$. \footnote{More precisely, this is the
dual of the original BGG resolution.}

By Theorem \ref{formula}, we have
$$
H_p(\mathfrak n,I(w(\lambda - \rho)))_{(v(\lambda-\rho)+\rho)} 
= H^{-p}(\pi_{v,+}(Li_v^+(D(\mathcal I(w,\lambda-\rho))))).
$$
By base change,
$$
Li_v^+(D(\mathcal I(w,\lambda-\rho))) = 0 
$$
if $v \ne w$. Hence, we have
$$
H_p(\mathfrak n,I(w(\lambda - \rho)))_{(v(\lambda-\rho)+\rho)}  = 0
$$ 
for all $p \in \mathbb Z_+$ if $v \ne w$.

On the other hand, if $v = w$ we have $ Li_w^+
= Ri_w^![\dim X - \ell(w)]$, and
$$ 
Li_w^+(D(\mathcal I(w,\lambda-\rho))) 
= Ri_w^!(D(\mathcal I(w,\lambda-\rho)))[\dim X - \ell(w)] = 
D(\mathcal O_{C(w)})[\dim X - \ell(w)]
$$
by Kashiwara's equivalence of categories.  By Lemma \ref{deRham},
this implies that
$$
\pi_{w,+}(Li_w^+(D(\mathcal I(w,\lambda-\rho)))) = D(\mathbb C)[\dim X].
$$
Hence, we conclude that
$$
H_p(\mathfrak n,I(w(\lambda - \rho)))_{(w(\lambda-\rho)+\rho)} = \begin{cases}
\mathbb C & \text{ if } p = \dim X; \\
0         & \text{ if } p \ne \dim X.
\end{cases}
$$
Therefore, we proved the following result.

\begin{lem}
Let $\lambda$ be an antidominant weight. Then 
$$
H_p(\mathfrak n,I(w(\lambda - \rho))) = \begin{cases}
\mathbb C_{w(\lambda-\rho)+\rho} & \text{ if } p = \dim X; \\
0         & \text{ if } p \ne \dim X.
\end{cases}
$$
\end{lem}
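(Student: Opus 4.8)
The plan is to exploit the weight decomposition of $\mathfrak n$-homology. Since $\lambda$ is antidominant, $\lambda - \rho$ is regular, so by the semisimplicity of $\mathfrak n$-homology as an $\mathfrak h$-module (Casselman--Osborne, \cite{casosb}) we have
$$
H_p(\mathfrak n, I(w(\lambda-\rho))) = \bigoplus_{v \in W} H_p(\mathfrak n, I(w(\lambda-\rho)))_{(v(\lambda-\rho)+\rho)},
$$
and it suffices to compute each graded piece. For this I would apply Theorem \ref{formula} with the antidominant representative $\lambda - \rho \in \theta$, which identifies the $v$-component with $H^{-p}(\pi_{v,+}(Li_v^+(D(\Delta_{\lambda-\rho}(I(w(\lambda-\rho)))))))$.

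The key geometric input is that the localization of the dual Verma module $I(w(\lambda-\rho))$ at $\lambda - \rho$ is the standard $\mathcal D_{\lambda-\rho}$-module $\mathcal I(w,\lambda-\rho) = i_{w,+}(\mathcal O_{C(w)})$ attached to the Bruhat cell $C(w)$ (this is the inverse of the equality $\Gamma(X,\mathcal I(w,\lambda-\rho)) = I(w(\lambda-\rho))$ used in Section \ref{bgg}, via \cite[Ch.~V]{book}). Its support is $\overline{C(w)}$, so by base change $Li_v^+(D(\mathcal I(w,\lambda-\rho))) = 0$ whenever $v \ne w$, which kills all of those weight components. For $v = w$, Kashiwara's equivalence identifies $Ri_w^!(D(\mathcal I(w,\lambda-\rho)))$ with $D(\mathcal O_{C(w)})$, and since $Li_w^+ = Ri_w^![\dim X - \ell(w)]$ we obtain $Li_w^+(D(\mathcal I(w,\lambda-\rho))) = D(\mathcal O_{C(w)})[\dim X - \ell(w)]$. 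Applying $\pi_{w,+}$ and Lemma \ref{deRham} with $\dim C(w) = \ell(w)$ then yields $\pi_{w,+}(Li_w^+(D(\mathcal I(w,\lambda-\rho)))) = D(\mathbb C)[\dim X]$.

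Putting these together, $H_p(\mathfrak n, I(w(\lambda-\rho)))_{(v(\lambda-\rho)+\rho)}$ vanishes for $v \ne w$ and, for $v = w$, equals $\mathbb C$ when $p = \dim X$ and $0$ otherwise; keeping track of the $\mathfrak h$-action shows that this line carries the weight $w(\lambda-\rho)+\rho$. Substituting into the first display gives the stated formula.

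The step I expect to require the most care is the identification $\Delta_{\lambda-\rho}(I(w(\lambda-\rho))) = \mathcal I(w,\lambda-\rho)$ together with the bookkeeping of the degree shifts in the base-change and Kashiwara steps --- making sure the shift from $Li_w^+$ is exactly $[\dim X - \ell(w)]$ and that $\pi_{w,+}$ contributes $[\ell(w)]$, so that the two combine to $[\dim X]$. Once these are pinned down, the remainder is purely formal.
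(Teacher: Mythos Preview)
Your proposal is correct and follows essentially the same route as the paper: apply Theorem~\ref{formula} with the localization $\Delta_{\lambda-\rho}(I(w(\lambda-\rho))) = \mathcal I(w,\lambda-\rho)$, use base change to kill the $v\ne w$ components, and for $v=w$ combine Kashiwara's equivalence with the shift $Li_w^+ = Ri_w^![\dim X - \ell(w)]$ and Lemma~\ref{deRham} to obtain $D(\mathbb C)[\dim X]$. One small wording point: the vanishing for $v\ne w$ is not a consequence of the support being $\overline{C(w)}$ but of base change applied to $i_{w,+}$ over the empty fiber product $C(v)\cap C(w)$; your invocation of base change is the right mechanism, the support remark is superfluous.
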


This implies the following result.

\begin{cor}
\label{ndv}
Let $\lambda$ be an antidominant weight. Then 
$$
H_p(\mathfrak n, C^q) = \begin{cases}
\bigoplus_{w \in W(\dim X - q)}\mathbb C_{w(\lambda-\rho)+\rho} & \text{ if } p = \dim X; \\
0         & \text{ if } p \ne \dim X.
\end{cases}
$$
\end{cor}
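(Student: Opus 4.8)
The plan is to deduce this immediately from the preceding Lemma using additivity of Lie algebra homology. First I would recall, via \cite[Ch.~V, 1.14]{book}, that $C^q = \Gamma(X,\mathcal C^q)$ decomposes as the finite direct sum
$$
C^q = \bigoplus_{w \in W(\dim X - q)} I(w(\lambda - \rho)).
$$
Next I would note that, for each fixed $p$, the functor $H_p(\mathfrak n, -)$ commutes with finite direct sums: it is the $p$-th homology of the Chevalley--Eilenberg complex $\Lambda^\cdot \mathfrak n \otimes_{\mathbb C} (-)$, which is additive in its module argument, and homology of a finite direct sum of complexes is the direct sum of the homologies. This decomposition is moreover compatible with the residual $\mathfrak h$-action, hence with the weight decomposition used to index the components. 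Consequently
$$
H_p(\mathfrak n, C^q) = \bigoplus_{w \in W(\dim X - q)} H_p(\mathfrak n, I(w(\lambda - \rho)))
$$
for every $p \in \mathbb Z_+$.

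Finally I would substitute, for each summand, the value supplied by the preceding Lemma: $H_p(\mathfrak n, I(w(\lambda - \rho)))$ equals $\mathbb C_{w(\lambda-\rho)+\rho}$ when $p = \dim X$ and vanishes otherwise. Summing over $w \in W(\dim X - q)$ gives precisely the two cases in the statement.

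There is no genuine obstacle here; this corollary is a purely formal consequence of the Lemma. The only point worth recording is that the $\mathfrak h$-weights $w(\lambda-\rho)+\rho$ attached to the distinct summands are pairwise distinct, since $\lambda - \rho$ is regular, so that the displayed formula exhibits $H_{\dim X}(\mathfrak n, C^q)$ canonically as a multiplicity-free sum of one-dimensional weight spaces indexed by $W(\dim X - q)$ — although the identity as written holds regardless of this.
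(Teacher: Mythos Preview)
Your argument is correct and is exactly the implicit reasoning the paper uses: the corollary is stated without proof, immediately after the Lemma, with only ``This implies the following result,'' relying on the direct-sum decomposition of $C^q$ and additivity of $H_p(\mathfrak n,-)$. Your added remark on distinctness of the weights is true but, as you note, not needed for the statement.
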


By the above discussion, the $\mathfrak n$-homology of $F$ is given
by the hypercohomology of the $\mathfrak n$-homology functor for the
complex $C^\cdot$. 

More precisely, the $\mathfrak n$-homology is the left derived functor
of the functor $V \longmapsto V/\mathfrak n V$ from $\mathcal
M(\mathcal U_\theta)$ into the category $\mathcal M_{ss}(\mathcal
U(\mathfrak h))$ of semisimple $\mathcal U(\mathfrak h)$-modules. We
can view this functor as an exact functor from $D^b(\mathcal
U_\theta)$ into $D^b(\mathcal M_{ss}(\mathcal U(\mathfrak h)))$.

Let $w \in W$. Then we can consider the composition of this functor
with the functor of taking the weight subspace of the weight
$w(\lambda - \rho) +\rho$. This is an exact functor $S$ from
$D^b(\mathcal U_\theta)$ into the bounded derived category
of vector spaces $D^b(\mathbb C)$. Clearly, we have
$$
H_p(\mathfrak n,V)_{(w(\lambda - \rho) + \rho)} = H_{-p}(S(D(V))).
$$
Therefore, we see that
\begin{align*} 
H_{-p}(S(D(C^q))) 
& = H_p(\mathfrak n,C^q)_{(w(\lambda - \rho) + \rho)} \\ 
& = \begin{cases}
0 & \text{ if $p \ne \dim X$ or $q \ne \dim X - \ell(w)$}; \\
\mathbb C & \text{ if $p = \dim X$ and $q = \dim X - \ell(w)$}.
\end{cases}
\end{align*}
It follows that 
$$
S(D(C^q)) = 0
$$
if $q \ne \dim X - \ell(w)$; and
$$
S(D(C^{\dim X - \ell(w)})) = D(\mathbb C)[\dim X].
$$

We interrupt the proof to prove a simple result in homological
algebra. An interested reader can easily supply an alternate argument
via spectral sequences.

Let $\mathcal A$ and $\mathcal B$ be two abelian categories. We denote
by $D^b(\mathcal A)$ and $D^b(\mathcal B)$ their bounded derived
categories.  Let $F : D^b(\mathcal A) \longrightarrow D^b(\mathcal B)$
be an exact functor between these triangulated categories.

\begin{lem}
\label{homol}
Let $C^\cdot$ be a complex in $D^b(\mathcal A)$. Assume that there
exists an integer $p_0$ such that $F(D(C^p)) = 0$ for all $p \ne p_0$.
Then 
$$
F(C^\cdot) = F(D(C^{p_0}))[-p_0].
$$
\end{lem}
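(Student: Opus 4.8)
The plan is to argue by induction on the length of the complex $C^\cdot$, i.e., on the number of indices $p$ for which $C^p \ne 0$. The key observation is that a bounded complex $C^\cdot$ fits, for any chosen index $q$, into a distinguished triangle relating $C^\cdot$ to the brutal truncations $\sigma_{\le q-1}C^\cdot$ and $\sigma_{\ge q}C^\cdot$; applying the exact (triangulated) functor $F$ yields a distinguished triangle in $D^b(\mathcal B)$, and the long exact cohomology sequence then lets us transfer the vanishing of $F$ on the individual $D(C^p)$ to the truncated subcomplexes.

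\smallskip

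First I would reduce to the case where $C^\cdot$ is concentrated in degrees $\ge a$ for some integer $a$ (shifting if necessary), and let $b$ be the top degree with $C^b \ne 0$. If $a = b$ the complex is $D(C^a)$ up to the trivial shift and there is nothing to prove, so assume $a < b$. If $a \ne p_0$, consider the brutal truncation $\sigma_{\ge a+1}C^\cdot$, which sits in a distinguished triangle
$$
D(C^a)[-a] \longrightarrow C^\cdot \longrightarrow \sigma_{\ge a+1}C^\cdot \longrightarrow D(C^a)[-a][1].
$$
Applying $F$ and using $F(D(C^a)) = 0$ gives $F(C^\cdot) \cong F(\sigma_{\ge a+1}C^\cdot)$, and the latter complex is strictly shorter, so the inductive hypothesis applies. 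If instead $a = p_0$, then since $a < b$ we have $b \ne p_0$, and the dual truncation $\sigma_{\le b-1}C^\cdot$ fits into a triangle
$$
D(C^b)[-b] \longrightarrow C^\cdot \longrightarrow \left(\sigma_{\le b-1}C^\cdot\right)[1] \longrightarrow D(C^b)[-b][1]
$$
(up to the standard sign and rotation conventions for the stupid filtration); applying $F$ and using $F(D(C^b)) = 0$ again yields $F(C^\cdot) \cong F(\sigma_{\le b-1}C^\cdot)$ with a strictly shorter complex, and we conclude by induction. In the base of the induction the complex reduces to $D(C^{p_0})$ placed in degree $p_0$, whence $F(C^\cdot) = F(D(C^{p_0}))[-p_0]$.

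\smallskip

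The main obstacle is purely bookkeeping: one must be careful that the stupid (brutal) truncation triangles are genuine distinguished triangles in $D^b(\mathcal A)$, which requires viewing $C^\cdot$ as a complex of objects of $\mathcal A$ rather than merely as an object of the derived category, and tracking the shifts and connecting maps correctly so that the iteration telescopes to exactly $[-p_0]$. Since $F$ is assumed exact as a functor of triangulated categories, it sends these triangles to triangles and commutes with shifts, so no further input is needed; the spectral-sequence alternative alluded to in the text would organize the same truncations into the hypercohomology spectral sequence of the stupid filtration, which degenerates immediately because its $E_1$-page has a single nonzero column.
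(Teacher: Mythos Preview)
Your approach is essentially the paper's own: induction on the length of $C^\cdot$ using the distinguished triangles of stupid truncations, with the only cosmetic difference that you peel off one extremal term at a time whereas the paper splits at an interior index $s$ and handles the two halves by induction (one vanishing, one carrying the shift). Be aware that the orientation of your triangles is off---for the bottom end the correct sequence is $\sigma_{\ge a+1}C^\cdot \to C^\cdot \to D(C^a)[-a] \to {}$, and for the top end $D(C^b)[-b] \to C^\cdot \to \sigma_{\le b-1}C^\cdot \to {}$ with no extra $[1]$---but since you only use that the vanishing of one vertex forces the other two to be isomorphic, your conclusions and the final shift $[-p_0]$ are unaffected.
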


\begin{proof}
We use freely the results on stupid truncations from \cite[Ch.~III,
  4.4 and 4.5]{dercat}.  

For any complex $X^\cdot$ in $D^b(\mathcal A)$ and an integer $s$ we
denote by $\sigma_{\ge s}(X^\cdot)$ and $\sigma_{\le s}(X^\cdot)$ its
stupid truncations. Then we have the distinguished triangle of stupid
truncations
$$
\xymatrix@R=60pt{
& \sigma_{\le s-1}(X^\cdot) \ar[dl]^{[1]} \\
\sigma_{\ge s}(X^\cdot) \ar[rr] & & X^\cdot 
\ar[ul]
} \quad .
$$

The proof is by induction on the length $\ell(C^\cdot)$. First we
remark that if the statement holds for complex $C^\cdot$, then for
such complex $F(D(C^p)) = 0$ for all $p \in \mathbb Z$ implies
$F(C^\cdot) = 0$.

The proof is by induction on the length of $C^\cdot$. If $C^p = 0$,
for all $p \in \mathbb Z$, the claim is obvious. If $\ell(C^\cdot) =
1$, we have $C^\cdot = D(C^q)[-q]$ for some $q \in \mathbb
Z$. Therefore, we have
$$
F(C^\cdot) = F(D(C^q)[-q]) = F(D(C^q))[-q]
$$
and the statement holds.

If $\ell(C^\cdot) > 1$, there exists $s \in \mathbb Z$ such that the
lengths of $\sigma_{\le s-1}(C^\cdot)$ and $\sigma_{\ge s}(C^\cdot)$
are strictly less than $\ell(C^\cdot)$. Moreover, either $p_0 < s$ or
$p_0 \ge s$. In the first case, by the induction assumption, $F(\sigma_{\le
  s-1}(C^\cdot)) = F(D(C^{p_0}))[-p_0]$. Moreover, by the above claim
$F(\sigma_{\ge s}(C^\cdot)) = 0$. From the distinguished triangle of
stupid truncations we conclude that $F(C^\cdot) = F(\sigma_{\le
  s-1}(C^\cdot)) = F(D(C^{p_0}))[-p_0]$.

In the second case, by the induction assumption, $F(\sigma_{\ge
  s}(C^\cdot)) = F(D(C^{p_0}))[-p_0]$. Moreover, by the above claim
$F(\sigma_{\le s-1}(C^\cdot)) = 0$. From the distinguished triangle of
stupid truncations we conclude that $F(C^\cdot) = F(\sigma_{\ge
  s}(C^\cdot)) = F(D(C^{p_0}))[-p_0]$.
\end{proof}

Now we go back to the discussion of $\mathfrak n$-homology. By Lemma
\ref{homol} it follows that
$$ 
S(C^\cdot) = D(\mathbb C)[\dim X][-\dim X +
  \ell(w)] = D(\mathbb C)[\ell(w)].
$$
Since $D(F)$ is isomorphic to its BGG resolution $C^\cdot$ in 
$D^b(\mathcal U_\theta)$, we see that $S(D(F)) =
D(\mathbb C)[\ell(w)]$. This implies that
\begin{align*}
H_p(\mathfrak n,F)_{(w(\lambda - \rho) + \rho)} &= H_{-p}(S(D(F))) 
= H_{-p}(S(C^\cdot))\\ 
&= H_{-p}(D(\mathbb C)[\ell(w)]) 
 = \begin{cases}
0 & \text{ if $p \ne \ell(w)$}; \\
\mathbb C & \text{ if $p = \ell(w)$}.
\end{cases}
\end{align*}
This finally leads to
$$
H_p(\mathfrak n,F)
= \bigoplus_{w \in W} H_p(\mathfrak n,F)_{(w(\lambda - \rho) + \rho)}
= \bigoplus_{w \in W(p)} \mathbb C_{w(\lambda - \rho) + \rho} ,
$$
i.e., Kostant's theorem (Theorem \ref{kostant}).

Therefore, this calculation shows that each dual Verma module in the
BGG resolution of $F$ gives exactly one cohomology class in $\mathfrak
n$-homology of $F$.

\subsection{Schmid's result and the Trauber resolution}
In this section we interpret the calculation of $\mathfrak n$-homology
of discrete series representations using the Trauber resolution
\cite{zuck}. This argument is very similar to that in Section
\ref{bgg}.

Let $\lambda$ be antidominant and regular. Fix a closed $K$-orbit
$Q$. Then, the $\mathcal D_\lambda$-module $\mathcal I(Q,\tau)$ can be
represented in $D^b(\mathcal D_\lambda)$ by its Cousin resolution
corresponding to the stratification of $Q$ by $B_K$-orbits $D_Q(w)$ as
explained in the Appendix \ref{itwcousin}.

Let $D_Q(w)$, $w \in W_K$, be a $B_K$-orbit in $Q$. Denote by $N_K$
the unipotent radical of $B_K$. Then $D_Q(w)$ admits unique
irreducible $N_K$-equivariant connection $\mathcal O_{D_Q(w)}$. Its
direct image $\mathcal J(w,\lambda)$ is a standard $\mathcal
D_\lambda$-module attached to $D_Q(w)$.

The Cousin resolution of $\mathcal I(Q,\tau)$ from Theorem
\ref{itwcousin} is a complex $\mathcal D^\cdot$ such that
$$
\mathcal D^p = \bigoplus_{w \in W_K(\dim Q - p)} \mathcal J(w,\lambda)
$$
for any $p \in \mathbb Z_+$, with explicitly given differentials.

Clearly, there is a natural monomorphism $\mathcal I(Q,\tau)$ into the
standard $\mathcal D_\lambda$-module $\mathcal J(w_0,\lambda)$
attached to the $B_K$-orbit $D_Q(w_0)$ open in $Q$ (here $w_0$ is the
longest element of the Weyl group $W_K$ now).  Therefore, there is a
natural morphism $\epsilon$ of $D(\mathcal I(Q,\tau))$ into $\mathcal
D^\cdot$ in $D^b(\mathcal D_\lambda)$.  The main result of
\ref{itwcousin} says that $\epsilon$ is an isomorphism.

Let $\theta$ be the Weyl group orbit of $\lambda$. Since the functor
$\Gamma$ is exact for antidominant $\lambda$, $D^\cdot =
\Gamma(X,\mathcal D^\cdot)$ is isomorphic to $D(\Gamma(X,\mathcal
I(Q,\tau)))$ in $D^b(\mathcal U_\theta)$, i.e., we get a resolution of
$\Gamma(X,\mathcal I(Q,\tau))$ by modules $D^p$, $p \in \mathbb Z_+$.
This is the {\em Trauber resolution} of the discrete series
$\Gamma(X,\mathcal I(Q,\tau))$ \cite{zuck}.

We put
$$
J(w,\lambda) = \Gamma(X,\mathcal J(w,\lambda))
$$
for any $w \in W_K$.

By Theorem \ref{formula}, we have
$$
H_p(\mathfrak n,J(w,\lambda))_{(v\lambda + \rho)} 
= H^{-p}(\pi_{v,+}(Li_v^+(D(\mathcal J(w,\lambda)))))
$$ for any $v \in W$.  By base change, we get
$$
Li_v^+(D(\mathcal J(w,\lambda))) = 0 
$$
if $v \ne wu$.

Hence, if $v \ne wu$, we have
$$
H_p(\mathfrak n,J(w,\lambda))_{(v\lambda+\rho)}  = 0
$$ 
for all $p \in \mathbb Z_+$.

On the other hand, if $v = wu$ we have 
$$ 
Li_{wu}^+(D(\mathcal J(w,\lambda))) 
= Ri_{wu}^!(D(\mathcal J(w,\lambda)))[\dim X - \ell(wu)].  
$$
Denote by $j_w$ the immersion of $D_Q(w)$ into $Q$. Then we have
$$
D(\mathcal J(w,\lambda)) = i_{Q,+}(j_{w,+}(D(\mathcal O_{D_Q(w)}))),
$$
since $D_Q(w)$ is affine and $Q$ is affinely imbedded. Therefore,
using the commutative diagram from Section \ref{schmidhomo}, we have
\begin{align*}
Ri_{wu}^!(D(\mathcal J(w,\lambda))) 
& = Ri_{wu}^!(i_{Q,+}(j_{w,+}(D(\mathcal O_{D_Q(w)}))))\\
& = a_+(Rb^!(j_{w,+}(D(\mathcal O_{D_Q(w)})))) = a_+(D(\mathcal O_{D_Q(w)}))
\end{align*}
by base change and Kashiwara's equivalence of categories. If we denote $c =
\pi_{wu} \circ a$ by Lemma \ref{deRham}, this implies that
\begin{align*}
\pi_{wu,+}(Li_{wu}^+(D(\mathcal J(w,\lambda)))) 
& = \pi_{wu,+}(a_+(D(\mathcal O_{D_Q(w)})))[\dim X - \ell(wu)]\\
& = c_+(D(\mathcal O_{D_Q(w)}))[\dim X - \ell(wu)] \\
& = D(\mathbb C)[\dim X - \ell(wu)][\ell_K(w)]\\
&= D(\mathbb C)[\dim X - \ell(wu) + \ell_K(w)].
\end{align*}
Hence, we conclude that
$$
H_p(\mathfrak n,J(w,\lambda))_{(wu\lambda+\rho)} = \begin{cases}
\mathbb C & \text{ if } p = \dim X - \ell(wu) + \ell_K(w); \\
0         & \text{ if } p \ne \dim X - \ell(wu) + \ell_K(w).
\end{cases}
$$
Therefore, we proved the following result.

\begin{lem}
  Let $\lambda$ be an antidominant weight and $w \in W_K$. Then
$$
H_p(\mathfrak n,J(w,\lambda)) = \begin{cases}
\mathbb C_{wu\lambda+\rho} & \text{ if } p = \dim X - \ell(wu) + \ell_K(w) ; \\
0         & \text{ if } p \ne \dim X- \ell(wu) + \ell_K(w) .
\end{cases}
$$
\end{lem}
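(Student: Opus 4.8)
The plan is to combine Theorem~\ref{formula} with the weight-space decomposition of $\mathfrak n$-homology. Since $\lambda$ is regular, for any $\mathcal U_\theta$-module $V$ one has $H_p(\mathfrak n, V) = \bigoplus_{v \in W} H_p(\mathfrak n, V)_{(v\lambda+\rho)}$, so it is enough to compute each weight component separately. Because $\lambda$ is antidominant and regular, $\Gamma$ and $\Delta_\lambda$ are mutually inverse equivalences, hence $\Delta_\lambda(J(w,\lambda)) = \mathcal J(w,\lambda)$, and Theorem~\ref{formula} gives $H_p(\mathfrak n, J(w,\lambda))_{(v\lambda+\rho)} = H^{-p}(\pi_{v,+}(Li_v^+(D(\mathcal J(w,\lambda)))))$ for every $v \in W$. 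So the whole computation reduces to understanding the restriction $Li_v^+$ of the standard module $\mathcal J(w,\lambda)$ to each Bruhat cell $C(v)$, followed by direct image to a point.

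For $v \ne wu$ the component vanishes. Indeed $\mathcal J(w,\lambda) = i_{Q,+}(j_{w,+}(\mathcal O_{D_Q(w)}))$, where $j_w : D_Q(w) \hookrightarrow Q$, so it is supported on the closure of $D_Q(w)$ inside the closed $K$-orbit $Q$. By Lemma~\ref{clorb_strat}, $Q$ meets $C(v)$ only when $v \in W_K u$, and $D_Q(w) = Q \cap C(wu)$; thus the fiber product $D_Q(w) \times_X C(v)$ is empty unless $v = wu$. Base change along $i_v$ then forces $Li_v^+(D(\mathcal J(w,\lambda))) = 0$, hence $H_p(\mathfrak n, J(w,\lambda))_{(v\lambda+\rho)} = 0$ for all $p$.

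For $v = wu$ one runs the computation already carried out above: write $Li_{wu}^+ = Ri_{wu}^![\dim X - \ell(wu)]$, apply base change through the commutative square relating $a : D_Q(w) \to C(wu)$, $b : D_Q(w) \to Q$, $i_Q$ and $i_{wu}$, and use Kashiwara's equivalence to identify $Rb^!(j_{w,+}(D(\mathcal O_{D_Q(w)})))$ with $D(\mathcal O_{D_Q(w)})$. Pushing forward to a point with $\pi_{wu}$ and invoking Lemma~\ref{deRham} (using $\dim D_Q(w) = \ell_K(w)$ from Lemma~\ref{clorb_strat}(iv)) yields $\pi_{wu,+}(Li_{wu}^+(D(\mathcal J(w,\lambda)))) = D(\mathbb C)[\dim X - \ell(wu) + \ell_K(w)]$, i.e.\ $\mathbb C$ concentrated in homological degree $\dim X - \ell(wu) + \ell_K(w)$, with $\mathfrak h$ acting by the weight $wu\lambda + \rho$.

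Assembling the two cases via the weight decomposition gives the asserted formula, the single nonzero piece being the line $\mathbb C_{wu\lambda+\rho}$ in degree $\dim X - \ell(wu) + \ell_K(w)$. The only real subtlety is the bookkeeping of the cohomological shifts: the shift $[\dim X - \dim C(wu)]$ converting $Li^+$ into $Ri^!$ on the bottom leg, the shift $[\dim Q - \dim D_Q(w)]$ relating $Rb^!$ and $Lb^+$, and the de Rham shift $[\dim D_Q(w)]$ from Lemma~\ref{deRham} must be tracked so that they combine to the stated degree; everything else is formal once Theorem~\ref{formula} and the base-change and Kashiwara machinery are in hand.
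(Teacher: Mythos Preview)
Your proof is correct and follows essentially the same route as the paper: you invoke Theorem~\ref{formula}, kill the $v \ne wu$ components by base change (using that $D_Q(w) = Q \cap C(wu)$ so the fiber product with any other Bruhat cell is empty), and for $v = wu$ you run exactly the paper's computation via $Li_{wu}^+ = Ri_{wu}^![\dim X - \ell(wu)]$, base change along the $(a,b,i_Q,i_{wu})$ square, the identification $Rb^!\,j_{w,+} = \id$, and Lemma~\ref{deRham}. One minor remark: in your closing commentary the shift $[\dim Q - \dim D_Q(w)]$ relating $Rb^!$ and $Lb^+$ is not actually used here (it appeared in the earlier proof of Theorem~\ref{schmid}, not in this lemma), so only the two shifts $[\dim X - \ell(wu)]$ and $[\ell_K(w)]$ enter.
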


By the above discussion, the $\mathfrak n$-homology of
$\Gamma(X,\mathcal I(Q,\tau))$ is given by the hypercohomology of the
$\mathfrak n$-homology functor for the complex $D^\cdot$.

Let $v \in W$. As in Section \ref{bgg}, we consider the exact functor
$S$ from $D^b(\mathcal U_\theta)$ into bounded derived category of
vector spaces $D^b(\mathbb C)$.

Therefore, we see that
\begin{multline*} 
H_{-p}(S(D(D^q))) 
= H_p(\mathfrak n,D^q)_{(v\lambda + \rho)} \\ 
= \begin{cases}
0 & \text{ if $v \ne wu$, $w \in W_K$, or $p \ne \dim X - \ell(wu) + \ell_K(w)$
 or $q \ne \dim Q - \ell_K(w)$}; \\
\mathbb C & \text{ if $v = wu$, $w \in W_K$, $p = \dim X- \ell(wu) + \ell_K(w)$
 and $q = \dim Q - \ell_K(w)$}.
\end{cases}
\end{multline*}

If $v \notin W_K u$, we see that $S(D(D^q)) = 0$. By Lemma \ref{homol} it
follows that $S(D^\cdot) = 0$. Since $D(\Gamma(X,\mathcal I(Q,\tau)))$
is isomorphic to its Trauber resolution $D^\cdot$ in $D^b(\mathcal
U_\theta)$, we see that $S(D(\Gamma(X,\mathcal I(Q,\tau)))) = 0$.
Therefore, we see that $H_p(\mathfrak n, \Gamma(X,\mathcal
I(Q,\tau)))_{(v\lambda + \rho)} = 0$ if $v \notin W_K u$ for all $p \in
\mathbb Z_+$.

Assume that $v = wu$, $w \in W_K$. It follows that 
$$
S(D(D^q)) = 0
$$
if $q \ne \dim Q - \ell_K(w)$; and
$$
S(D(D^{\dim Q - \ell_K(w)})) = D(\mathbb C)[\dim X - \ell(wu) + \ell_K(w)].
$$
By Lemma \ref{homol} it follows that
\begin{align*} 
S(D^\cdot) &= D(\mathbb C)[\dim X - \ell(wu) + \ell_K(w)][-\dim Q +
\ell_K(w)]\\
&= D(\mathbb C)[\dim X - \dim X_K -  \ell(wu) + 2 \ell_K(w)] \\
&=
D(\mathbb C)[ \tfrac 1 2 \dim (\mathfrak g/\mathfrak k)
  - \ell(wu) + 2 \ell_K(w)].
\end{align*} 
Since $D(\Gamma(X,\mathcal I(Q,\tau)))$ is isomorphic to its
Trauber resolution $D^\cdot$ in $D^b(\mathcal U_\theta)$, we see that
$S(D(\Gamma(X,\mathcal I(Q,\tau)))) = D(\mathbb C)
[\frac 1 2 \dim (\mathfrak g/\mathfrak k) - 
\ell(wv) + 2 \ell_K(w)]$. This implies that
\begin{align*}
H_p(\mathfrak n,\Gamma(X,\mathcal I(Q,\tau)))_{(wu\lambda + \rho)} 
&= H_{-p}(S(D(\Gamma(X,\mathcal I(Q,\tau))))) 
= H_{-p}(S(D^\cdot))\\ 
&= H_{-p}(D(\mathbb C) [ \tfrac 1 2 \dim (\mathfrak g/\mathfrak k)
  -  \ell(wu) + 2 \ell_K(w) ])\\ 
&= \begin{cases}
  0 & \text{ if $p \ne \frac 1 2 \dim(\mathfrak g/\mathfrak k) -  \ell(wu)
    + 2 \ell_K(w)$}; \\
\mathbb C & \text{ if $p = \tfrac 1 2 \dim(\mathfrak g/\mathfrak k)
  -  \ell(wu) + 2 \ell_K(w)$}.
\end{cases}
\end{align*}
This is exactly Schmid's theorem (Theorem \ref{schmid}).

Therefore, this calculation shows that each module $J(w,\lambda)$ in the
Trauber resolution contributes exactly one cohomology class in $\mathfrak
n$-homology of $\Gamma(X,\mathcal I(Q,\tau))$.

\section{Character formulas}
\label{character_formulas}
In this section we calculate the characters of discrete series
representations $\Gamma(X, \mathcal I(Q,\tau))$ on the elliptic set.
We start with the trivial (and well-known) example of a compact Lie
group $G_0$. In this case, our result is just the well-known Weyl
character formula.

\subsection{The case of compact groups}
Let $G_0$ be a connected compact semisimple Lie group with
complexified Lie algebra $\mathfrak g$. We fix a maximal tours $T_0$
in $G_0$.  Let $\mathfrak t \subset \mathfrak g$ be its complexified
Lie algebra. Fix a set of positive roots $R^+$ in the root system $R$
of $(\mathfrak g,\mathfrak t)$. Then, $\mathfrak t$ and the root
subspaces of $\mathfrak g_\alpha$ corresponding to roots $\alpha \in
R^+$ span a Borel subalgebra $\mathfrak b$. We put $\mathfrak n =
[\mathfrak b,\mathfrak b]$.

Let $F$ be the irreducible finite-dimensional representation of $G_0$
with lowest weight $\lambda \in \mathfrak h^*$. We want to determine
the character $\ch(F)$ of $F$. Since all conjugacy classes in $G_0$
intersect $T_0$, it is enough to give a formula for $\ch(F)$ on
$T_0$.

Clearly, $F$ and the standard complex $C^\cdot(\mathfrak n,F)$, with
$C^{-p} (\mathfrak n,F) = \bigwedge^p \mathfrak n \otimes F$, $p \in
\mathbb Z$, have natural structures of finite-dimensional
representations of $T$. Therefore, their characters $\ch_T$ are
well-defined. By Euler's principle, we have
$$
\sum_{p = 0}^n (-1)^p \ch_T(C^{-p}(\mathfrak n,F))
= \sum_{p = 0}^n (-1)^p \ch_T(H_p(\mathfrak n,F))
$$
where $n = \dim \mathfrak n$.

Moreover, we have
$$
\ch_T(C^{-p}(\mathfrak n,F)) = \ch_T\left(\bigwedge^p \mathfrak n \otimes
F\right) =
\ch_T\left(\bigwedge^p \mathfrak n\right) \cdot \ch(F)
$$
on $T_0$.  Hence, it follows that
\begin{align*}
\sum_{p = 0}^n (-1)^p \ch_T(H_p(\mathfrak n,F))
&= \sum_{p = 0}^n (-1)^p \ch_T(C^{-p}(\mathfrak n,F))\\
& =
\ch(F) \cdot
\sum_{p = 0}^n (-1)^p \ch_T\left(\bigwedge^p \mathfrak n\right).
\end{align*}
This finally implies the following formula
$$
\ch(F) = \frac { \sum_{p=0}^n (-1)^p \ch_T(H_p(\mathfrak n,F))}
  {\sum_{p=0}^n (-1)^p \ch_T(\bigwedge^p \mathfrak n)} 
$$ on $T_0$.
  
Let $e^\mu : T \longrightarrow \mathbb C^*$ be the morphism with the
differential corresponding under specialization to $\mu \in \mathfrak
h^*$. Then the weights in $\bigwedge^p \mathfrak n$ are the sums of
all sets of $p$ different roots from $R^+$; i.e, we have
$$
\ch_T \left(\bigwedge^p \mathfrak n\right)
= \sum_{P \subset R^+, \ \Card(P) = p} \left(\prod_{\alpha \in P} e^\alpha \right).
$$
Therefore, we have
\begin{align*}
\sum_{p=0}^n (-1)^p \ch_T\left(\bigwedge^p \mathfrak n \right)
&= \sum_{p=0}^n (-1)^p \left(\sum_{P \subset R^+, \ \Card(P) = p}
\left(\prod_{\alpha \in P} e^\alpha \right)\right)\\
&= \sum_{p=0}^n \left(\sum_{P \in R^+, \ \Card(P) = p}
\left(\prod_{\alpha \in P} (- e^\alpha)\right) \right) \\
&= \sum_{P \subset R^+} \left(\prod_{\alpha \in P} (- e^\alpha)\right)
= \prod_{\alpha \in R^+} (1 - e^\alpha).
\end{align*}

Hence, we have
$$
\ch(F) = \frac { \sum_{p=0}^n (-1)^p
\ch_T(H_p(\mathfrak n,F_\lambda))}
  {\prod_{\alpha \in R^+} (1 - e^\alpha)} 
  $$
  on $T_0$.  Using Kostant's theorem \ref{kostant}, we get
  $$
\ch_T(H_p(\mathfrak n,F))= \sum_{w \in W(p)} e^{w(\lambda-\rho)+\rho} 
  $$
and finally
$$
\sum_{p=0}^n (-1)^p \ch_T(H_p(\mathfrak n,F)) =
 \sum_{w \in W} (-1)^{\ell(w)} e^{w(\lambda-\rho)+\rho}.
 $$
 Putting everything together, we conclude that
 $$
 \ch(F) = \frac { \sum_{w \in W} (-1)^{\ell(w)} e^{w(\lambda -\rho) + \rho}}
 {\prod_{\alpha \in R^+} (1 - e^\alpha) }.
 $$
  This finally implies the following result.

  \begin{thm}[Weyl]
    Let $F$ be the irreducible finite-dimensional representation of
    $\mathfrak g$ with lowest weight $\lambda$. Then
    $$
\ch(F) = \frac { \sum_{w \in W} (-1)^{\ell(w)} e^{w(\lambda -\rho) + \rho}}
   {\prod_{\alpha \in R^+} (1 - e^\alpha) }
   $$
 on the maximal torus $T_0$ in $G_0$.
  \end{thm}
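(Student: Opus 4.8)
The plan is to reduce the computation of $\ch(F)$ to an identity on the maximal torus $T_0$ and then to read the formula off the Euler characteristic of the standard (Koszul) complex computing $\mathfrak n$-homology. First I would use that every conjugacy class in the connected compact group $G_0$ meets $T_0$, so it suffices to compute $\ch(F)$ on $T_0$. Since $\mathfrak n$ and $F$ are $\mathfrak t$-stable, the terms $\bigwedge^p\mathfrak n\otimes F$ of the standard complex $C^\cdot(\mathfrak n,F)$ carry compatible actions of the torus $T$, the differentials are $T$-equivariant, and so are the homology groups $H_p(\mathfrak n,F)$. Euler's principle then gives
$$
\sum_{p=0}^{\dim\mathfrak n}(-1)^p\ch_T\left(\bigwedge^p\mathfrak n\otimes F\right)=\sum_{p=0}^{\dim\mathfrak n}(-1)^p\ch_T\left(H_p(\mathfrak n,F)\right),
$$
and, because $\ch_T(\bigwedge^p\mathfrak n\otimes F)=\ch_T(\bigwedge^p\mathfrak n)\cdot\ch(F)$ on $T_0$, this expresses $\ch(F)$ as the quotient of the right-hand side by $\sum_p(-1)^p\ch_T(\bigwedge^p\mathfrak n)$, once the latter is seen to be nonzero.

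Next I would compute the denominator. The weights of $\bigwedge^p\mathfrak n$ are precisely the sums $\sum_{\alpha\in P}\alpha$ over $p$-element subsets $P\subseteq R^+$, so by expanding the product over $R^+$ one gets
$$
\sum_{p}(-1)^p\ch_T\left(\bigwedge^p\mathfrak n\right)=\sum_{P\subseteq R^+}\prod_{\alpha\in P}(-e^\alpha)=\prod_{\alpha\in R^+}(1-e^\alpha),
$$
which in particular is a nonzero element of the representation ring, legitimizing the division above.

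For the numerator I would invoke Kostant's theorem (Theorem \ref{kostant}), which identifies $H_p(\mathfrak n,F)=\bigoplus_{w\in W(p)}\mathbb C_{w(\lambda-\rho)+\rho}$ as a $T$-module; hence $\ch_T(H_p(\mathfrak n,F))=\sum_{w\in W(p)}e^{w(\lambda-\rho)+\rho}$, and summing with signs over $p$,
$$
\sum_{p}(-1)^p\ch_T\left(H_p(\mathfrak n,F)\right)=\sum_{w\in W}(-1)^{\ell(w)}e^{w(\lambda-\rho)+\rho}.
$$
Combining this with the denominator computation yields the asserted formula on $T_0$.

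The main obstacle is essentially already discharged: the substantive input is Kostant's theorem, proved geometrically above via Theorem \ref{formula}, so what remains is routine bookkeeping — the $T$-equivariance of the Koszul complex and of its homology, the identification of the weights of the exterior powers of $\mathfrak n$, and the standard fact that all conjugacy classes of a connected compact group meet a fixed maximal torus. I would also note that, since $\lambda$ here is the \emph{lowest} weight, the term $w_0(\lambda-\rho)+\rho$ (with $w_0\in W$ the longest element) recovers the usual highest-weight normalization, so no sign or convention discrepancy with the classical statement arises.
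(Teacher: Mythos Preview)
Your proposal is correct and follows essentially the same argument as the paper: reduce to $T_0$, apply Euler's principle to the standard complex $\bigwedge^\cdot\mathfrak n\otimes F$, compute the denominator as $\prod_{\alpha\in R^+}(1-e^\alpha)$ from the weights of $\bigwedge^p\mathfrak n$, and read off the numerator from Kostant's theorem. The only additions are your remark on the nonvanishing of the denominator and the lowest/highest weight reconciliation, both of which are harmless and correct.
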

  
  \subsection{Discrete series}
  The formal Euler characteristic argument in the case of compact
  $G_0$ clearly does not work in the case of the discrete series of a
  noncompact $G_0$ since they are infinite-dimensional. It is replaced
  by an argument based on the Osborne formula
  \cite{osborne}.\footnote{Actually, we need only a special case for
  compact Cartan subgroups \cite[7.27]{osborne}, which is already
  implicit in \cite{l2coho}.}

 Let $G_0$ be a connected semisimple Lie group and $K_0$ its maximal
 compact subgroup. Let $C_0^\infty(G_0)$ be the space of
 complex-valued smooth functions on $G_0$ with compact support,
 equipped with the usual topology. The continuous dual of that space
 is the space of distributions on $G_0$. Let $V$ be a Harish-Chandra
 module of finite length, then the Harish-Chandra character $\ch(V)$
 of $V$ is a distribution on $G_0$
 \cite{hc_char}.\footnote{Harish-Chandra defines the character for a
 representation of $G_0$ on a Hilbert space. By \cite[Proposition
   8.23]{cm}, every Harish-Chandra module $V$ of finite length is the
 module of $K_0$-finite vectors of a subrepresentation of a
 representation of $G_0$ induced by a finite dimensional
 representation of a minimal parabolic subgroup $P_0$ of
 $G_0$. Therefore, Harish-Chandra's construction applies to the closure
 of $V$. Moreover, the character is the sum of $K_0$-finite matrix
 coefficients which are completely determined by the Harish-Chandra
 module \cite[Thm. 8.7]{cm}.}

 The characters are invariant under inner automorphisms of $G_0$,
 therefore they are {\em invariant} distributions on $G_0$. The center
 $\mathcal Z(\mathfrak g)$ of $\mathcal U(\mathfrak g)$ is naturally
 identified with the invariant differential operators on $G_0$. If $V$
 is an irreducible Harish-Chandra module in $\mathcal M(\mathcal
 U_\theta)$, its character $\ch(V)$ is annihilated by the maximal
 ideal $J_\theta$ in $\mathcal Z(\mathfrak g)$; i.e., it is an {\em
   invariant eigendistribution} on $G_0$. We denote the space of all
 invariant eigendistributions annihilated by $J_\theta$ by $\mathcal
 E(G_0, \theta)$. Harish-Chandra has shown that any distribution $T$ in
 $\mathcal E(G_0,\theta)$ is given by
$$
C_0^\infty (G_0) \ni f \longmapsto T(f) = \int_{G_0} f(g) \Theta_T(g) \, d \mu(g)
$$
where $\mu$ is a fixed Haar measure on $G_0$ and $\Theta_T$ is a
locally integrable real analytic function, constant on conjugacy
classes, on the set $G_0'$ of all regular elements in $G_0$
\cite[Thm.~2]{hc_reg_char}.

Therefore, for any irreducible Harish-Chandra module $V$ the character
$\ch(V)$ is given by
$$
C_0^\infty (G_0) \ni f \longmapsto \ch(V)(f) = \int_{G_0} f(g) \Theta_V(g) \,
d \mu(g)
$$
for a locally integrable real analytic function $\Theta_V$ on
$G_0'$ constant on conjugacy classes.

From now on, we assume that ranks of $G_0$ and $K_0$ are equal.  Let
$T_0$ be a maximal torus in $K_0$. An element $g \in G_0$ is {\em
  elliptic} if its adjoint action $\Ad(g)$ on $\mathfrak g$ is
semisimple and its eigenvalues are complex numbers of absolute value
$1$. Denote by $E$ the set of all regular elliptic elements in $G_0$.
Also denote by $T_0'$ the set of regular elements in $T_0$. Clearly,
$E$ is an open set in $G_0$, invariant under conjugation by elements
of $G_0$, and every conjugacy class in $E$ intersects $T_0$.

The restriction to $E$ of the function $\Theta_V$ for an irreducible
Harish-Chandra module is a real analytic function on $E$ constant on
conjugacy classes. The Osborne formula describes this function
explicitly. Let $\mathfrak b$ be a Borel subalgebra in $X$ such that
$\mathfrak t \subset \mathfrak b$ and $\mathfrak n = [\mathfrak b,
  \mathfrak b]$. Then, we have
$$
\Theta_V|_{T_0'} = \frac { \sum_{p=0}^n (-1)^p \ch_T(H_p(\mathfrak n,V))}
  {\prod_{\alpha \in R^+} (1 - e^\alpha)}
  $$
  on the regular elements in a compact Cartan subgroup $T_0$ of
  $G_0$. This is the special case of the Osborne formula we alluded to
  above. It is a generalization of the formula we used in calculations
  for compact group $G_0$ in the preceding section.

Now, we are going to apply this formula to the case of the discrete series
representation $V = \Gamma(X,\mathcal I(Q,\tau))$ as discussed in
Section \ref{nhomology}. Assume that $\mathfrak b$ is a Borel
subalgebra corresponding to a point in $Q$.
  
Let $\epsilon : W \longrightarrow \{\pm 1\}$ be the character of the
Weyl group $W$ given by $\epsilon(w) = \det (w)$, $w \in W$. Then,
$\epsilon(s) = -1$ for any reflection $s$ in $W$. Therefore, we have
$\epsilon(w) = (-1)^{\ell(w)}$ for any $w \in W$. In addition, since
$\epsilon(s) = -1$ for any reflection $s$ with respect to a compact
root, the restriction of $\epsilon$ to $W_K$ is equal to the
corresponding character of Weyl group $W_K$. 
First, by Theorem \ref{schmid}, the numerator in
the Osborne formula is equal to
\begin{align*}
  \sum_{p \in \mathbb Z_+} (-1)^p \ch_T(H_p(\mathfrak n, & \Gamma(X,
  \mathcal I(Q,\tau))) \\
& = \sum_{p \in \mathbb Z_+} (-1)^p \Big(\sum_{w \in W_K}
\ch_T (H_p(\mathfrak n,\Gamma(X, \mathcal I(Q,\tau)))_{(w\lambda + \rho)})\Big)\\
& =\sum_{w \in W_K} \Big(\sum_{p \in \mathbb Z_+} (-1)^p
\ch_T (H_p(\mathfrak n,\Gamma(X, \mathcal I(Q,\tau)))_{(w\lambda + \rho)}\Big)\\
& = \sum_{w \in W_K} (-1)^{\frac 1 2 \dim(\mathfrak g /\mathfrak k) - \ell(w)
  +2 \ell_K(w)} e^{w\lambda+\rho}\\
& = (-1)^{\frac 1 2 \dim(\mathfrak g/\mathfrak k)} \sum_{w \in W_K} (-1)^{\ell(w)}
e^{w  \lambda + \rho}.
\end{align*}
This finally implies the following result.

\begin{thm}
  \label{disc_char}
On the regular elements of the compact Cartan subgroup $T_0$ the
character of the discrete series representation $V = \Gamma(X,\mathcal
I(Q,\tau))$ is given by
$$ \Theta_V|_{T_0'}
= (-1)^{\frac 1 2 \dim(\mathfrak
  g/\mathfrak k)} \frac { \sum_{w \in W_K} (-1)^{\ell_K(w)}
  e^{w \lambda + \rho}} {\prod_{\alpha \in R^+}(1 - e^\alpha)} \quad .
  $$
\end{thm}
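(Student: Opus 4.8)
The plan is to combine the Osborne formula (in the special case of a compact Cartan subgroup, stated just above) with the $\mathfrak n$-homology computation already carried out in Theorem \ref{schmid}. First I would pin down the $\sigma$-stable Borel subalgebra $\mathfrak b$. Since $Q$ is a closed $K$-orbit it contains a point $x$ with $\mathfrak t \subset \mathfrak b_x$, and for such an $x$ the stabilizer $S_x$ of $x$ in $K$ is exactly the Borel subgroup $B_K$ of $K$ attached to $\mathfrak b_x \cap \mathfrak k$; hence $x$ is fixed by $B_K$, so $Q \cap C(e) = \{x\}$, and by the uniqueness in Lemma \ref{clorb_strat}.(i) the element $u \in W$ of that lemma is $u = e$. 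Taking $\mathfrak b = \mathfrak b_x$ and $\mathfrak n = [\mathfrak b,\mathfrak b]$ the hypotheses of the Osborne formula are met ($\mathfrak t \subset \mathfrak b$), so on the regular set $T_0'$ we have $\Theta_V|_{T_0'} = \bigl(\sum_{p}(-1)^p \ch_T(H_p(\mathfrak n,V))\bigr)\big/\prod_{\alpha\in R^+}(1-e^\alpha)$, and the whole problem reduces to evaluating the Euler characteristic in the numerator.

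Next I would feed Theorem \ref{schmid} into that numerator, using $u = e$. It says that $H_p(\mathfrak n,V)_{(v\lambda+\rho)}$ vanishes unless $v = w$ for some $w \in W_K$, and that for $w \in W_K$ the space $H_p(\mathfrak n,V)_{(w\lambda+\rho)}$ is one-dimensional precisely when $p = p_w := \tfrac12 \dim(\mathfrak g/\mathfrak k) - \ell(w) + 2\ell_K(w)$ and is zero otherwise. Therefore $H_p(\mathfrak n,V) = \bigoplus_{w\in W_K} H_p(\mathfrak n,V)_{(w\lambda+\rho)}$ as a $T$-module, and summing over $p$ with signs gives $\sum_p (-1)^p \ch_T(H_p(\mathfrak n,V)) = \sum_{w\in W_K} (-1)^{p_w} e^{w\lambda+\rho}$. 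Since $(-1)^{2\ell_K(w)} = 1$ and $(-1)^{-\ell(w)} = (-1)^{\ell(w)}$, the sign $(-1)^{p_w}$ equals $(-1)^{\frac12\dim(\mathfrak g/\mathfrak k)}(-1)^{\ell(w)}$, so the numerator is $(-1)^{\frac12\dim(\mathfrak g/\mathfrak k)}\sum_{w\in W_K}(-1)^{\ell(w)} e^{w\lambda+\rho}$.

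Finally I would replace $(-1)^{\ell(w)}$ by $(-1)^{\ell_K(w)}$ for $w \in W_K$, which is the observation recorded in the preamble to the theorem: the sign character $\epsilon(w) = \det(w) = (-1)^{\ell(w)}$ of $W$ takes the value $-1$ on every reflection with respect to a simple compact root, and these reflections generate $W_K$, so $\epsilon|_{W_K}$ coincides with the sign character $w \mapsto (-1)^{\ell_K(w)}$ of $W_K$. Substituting the resulting numerator back into the Osborne formula yields $\Theta_V|_{T_0'} = (-1)^{\frac12\dim(\mathfrak g/\mathfrak k)}\bigl(\sum_{w\in W_K}(-1)^{\ell_K(w)} e^{w\lambda+\rho}\bigr)\big/\prod_{\alpha\in R^+}(1-e^\alpha)$, which is the asserted identity.

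As for the main obstacle: the genuinely deep ingredient is the Osborne formula itself — a special case of the Osborne conjecture, used here as a black box exactly as it replaces the finite-dimensional Euler-characteristic argument of the compact case. Within the present framework nothing is really hard; the only point requiring a little care is the compatibility of the two demands on $\mathfrak b$, namely that it contain $\mathfrak t$ (needed for Osborne) and that it mark the $B_K$-fixed point of $Q$ (which forces $u = e$ and makes the weights of Theorem \ref{schmid} appear simply as $w\lambda+\rho$, $w\in W_K$), together with the small sign bookkeeping in passing from $\ell$ to $\ell_K$.
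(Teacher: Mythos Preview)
Your proposal is correct and follows essentially the same route as the paper: choose $\mathfrak b$ to correspond to a point of $Q$ containing $\mathfrak t$ (so that $u=e$), plug Theorem~\ref{schmid} into the Osborne formula, and simplify the signs using $\epsilon|_{W_K}=(-1)^{\ell_K(\cdot)}$. If anything, you are slightly more explicit than the paper in justifying $u=e$ via Lemma~\ref{clorb_strat}; the paper simply asserts ``Assume that $\mathfrak b$ is a Borel subalgebra corresponding to a point in $Q$'' and then writes the weights as $w\lambda+\rho$ without further comment.
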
 

\subsection{Relation with Harish-Chandra parametrization}
In this section, we relate the information from \ref{disc_char} with
Harish-Chandra's results on discrete series characters.

In \cite{ds2}, Harish-Chandra introduced the Schwartz space $\mathcal
C(G_0)$ consisting of complex-valued rapidly decreasing smooth
functions on $G_0$. It contains $C_0^\infty(G_0)$ as a dense subspace.
A distribution on $G_0$ is {\em tempered} if it extends to a
continuous linear form on $\mathcal C(G_0)$. We denote by $\mathcal
E_{temp}(G_0,\theta)$ the subspace of $\mathcal E(G_0,\theta)$
consisting of tempered invariant eigendistributions.

If $G_0$ and $K_0$ have the same rank, the set of regular elliptic
elements $E$ is open in $G_0$ and one can consider the restriction map
from the space of invariant eigendistributions on $G_0$ to the space
of invariant eigendistributions on $E$.

Harish-Chandra proved the following result:\footnote{An equivalent
result was proven later by Atiyah and Schmid in \cite{atyiah-schmid}
by different methods.}

\begin{thm}
  \label{restriction_cc}
Let $\theta$ be a $W$-orbit in $\mathfrak h^*$ consisting of regular
real elements. Then the restriction map to $E$ is injective on
$\mathcal E_{temp}(G_0,\theta)$.
\end{thm}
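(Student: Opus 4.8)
The plan is to reduce the injectivity statement on $E$ to a statement about the behavior of invariant eigendistributions near the regular elements of the compact Cartan subgroup $T_0$, and then to exploit the matching conditions for invariant eigendistributions together with tempered growth to eliminate contributions from other Cartan subgroups. First I would recall that, by Harish-Chandra's regularity theorem, each $T$ in $\mathcal E_{temp}(G_0,\theta)$ is represented by a locally integrable, conjugation-invariant, real analytic function $\Theta_T$ on the regular set $G_0'$. Since every conjugacy class in $E$ meets $T_0$, the restriction of $\Theta_T$ to $E$ is determined by its restriction to $T_0'$, so it suffices to show: if $T \in \mathcal E_{temp}(G_0,\theta)$ and $\Theta_T|_{T_0'} = 0$, then $T = 0$.

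The heart of the argument is to control $\Theta_T$ on the other (noncompact) Cartan subgroups of $G_0$. On each such Cartan subgroup $H_0$, the eigendistribution equation forces $\Theta_T$ (suitably normalized by the Weyl denominator) to satisfy a system of differential equations whose solutions are spanned by exponentials $e^{w\mu}$, $w \in W$; the coefficients are locally constant on the chambers of $H_0'$. The temperedness hypothesis restricts which exponentials can occur and bounds the growth of the coefficient functions, while Harish-Chandra's jump/matching conditions across the singular hypersurfaces relate the coefficient functions on adjacent chambers, and also relate the behavior near a singular point of $H_0$ to the behavior on a Cartan subgroup of smaller split rank "adjacent" to $H_0$. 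Running this descent inductively on the split rank, starting from the maximally split Cartan and moving toward $T_0$, one shows that if $\Theta_T$ vanishes near $T_0'$ then all the jump data vanish, hence $\Theta_T$ vanishes on every Cartan subgroup, hence $T=0$ as a distribution (since $\Theta_T$ is locally $L^1$ and the union of the conjugates of the regular sets of the Cartan subgroups is dense).

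Concretely, I would organize the induction as follows. Order the conjugacy classes of Cartan subgroups by increasing dimension of their split part, $T_0 = H_0^{(0)}, H_0^{(1)}, \dots, H_0^{(r)}$. Suppose inductively that $\Theta_T$ vanishes on $H_0^{(j)}$ for all $j < k$ in a neighborhood of the relevant singular locus; examine a semiregular element $h$ lying in the closure of a chamber of $(H_0^{(k)})'$ and in the closure of the regular set of some $H_0^{(j)}$, $j<k$. The matching condition expresses the jump of the coefficient functions of $\Theta_T$ across the singular wall through $h$ in terms of the value of $\Theta_T$ on the smaller Cartan $H_0^{(j)}$, which vanishes by the inductive hypothesis; hence the coefficient functions of $\Theta_T$ on $H_0^{(k)}$ extend across that wall. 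Combining this over all walls with the temperedness bound (which forbids the surviving "constant" solution on the split directions from being nonzero unless it is genuinely $W$-harmonic and tempered, which near $T_0$ it is not), one concludes $\Theta_T = 0$ on $H_0^{(k)}$, completing the induction.

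The main obstacle is the precise bookkeeping in the inductive step: making the matching/jump conditions quantitative enough that "vanishing near $T_0'$" really propagates through every semiregular wall, and simultaneously using temperedness to kill the exponential solutions that the pure eigendistribution equation would otherwise allow on the split Cartans. This is exactly the technical core of \cite[\S I]{ds1}; in a geometric treatment one would prefer to replace it by an argument that the distributions in $\mathcal E_{temp}(G_0,\theta)$ are spanned (via Theorem \ref{disc_char} and a dimension count against $\Card(W/W_K)$) by discrete series characters, each of which is manifestly determined by its restriction to $E$ — but proving that spanning statement without circularity is itself delicate, so I would present the descent argument above as the primary route, citing \cite{hc_reg_char} and \cite{ds1} for the analytic estimates on the Weyl-denominator-normalized eigendistributions and their jumps.
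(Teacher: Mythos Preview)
The paper does not give its own proof of this theorem: it is stated as a result of Harish-Chandra (with a footnote crediting an alternative proof to Atiyah and Schmid) and then used as a black box. In the introduction the authors explicitly say that this injectivity statement ``is a straightforward application of Harish-Chandra's `matching conditions'\,''. Your proposal is precisely a sketch of that Harish-Chandra argument --- regularity theorem, expansion of the Weyl-denominator-normalized $\Theta_T$ on each Cartan as a chamber-wise sum of exponentials, matching conditions at semiregular walls, and temperedness to kill the remaining exponentials --- so you are reconstructing exactly the proof the paper is pointing to rather than one it supplies.

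Two small comments. First, your description of the induction direction is internally inconsistent: you write ``starting from the maximally split Cartan and moving toward $T_0$'', but your concrete organization (correctly) starts at $T_0=H_0^{(0)}$ and propagates vanishing \emph{outward} to Cartans of larger split rank, since the matching condition expresses the jump on the more-split Cartan in terms of the value on the less-split one. Second, you are right that the alternative route via Theorem~\ref{disc_char} and a dimension count against $\Card(W/W_K)$ would be circular here: in the paper's logic, Theorem~\ref{restriction_cc} is an input used to upgrade Theorem~\ref{disc_char} to the uniqueness statement and to deduce that discrete series characters span $\mathcal E_{temp}(G_0,\theta)$, not a consequence of it.
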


Since the characters of discrete series are tempered invariant
eigendistributions \cite[Lem.~76]{ds2}, they are completely determined
by their restrictions to $E$. In particular, we have the following
version of \ref{disc_char}:

\begin{thm}
The character of the discrete series representation $V =
\Gamma(X,\mathcal I(Q,\tau))$ is the unique tempered invariant
eigendistribution $\ch(V)$ satisfying
$$
\Theta_V|_{T_0'}
= (-1)^{\frac 1 2 \dim(\mathfrak
  g/\mathfrak k)} \frac { \sum_{w \in W_K} (-1)^{\ell_K(w)}
  e^{w \lambda + \rho}} {\prod_{\alpha \in R^+}(1 - e^\alpha)} \quad .
  $$
\end{thm}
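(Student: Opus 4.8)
The plan is to combine the explicit formula of Theorem~\ref{disc_char} with the injectivity statement of Theorem~\ref{restriction_cc}, so that essentially nothing remains to be computed. First I would recall that, by \cite[Lem.~76]{ds2}, the Harish-Chandra character $\ch(V)$ of the discrete series representation $V = \Gamma(X,\mathcal I(Q,\tau))$ is a tempered invariant eigendistribution on $G_0$; since $V$ belongs to $\mathcal M(\mathcal U_\theta)$ for the regular real $W$-orbit $\theta = W\lambda$ attached to $V$ in Section~\ref{schmidhomo}, this says $\ch(V) \in \mathcal E_{temp}(G_0,\theta)$. Theorem~\ref{disc_char} then produces precisely the asserted expression for $\Theta_V|_{T_0'}$, so $\ch(V)$ does satisfy the stated identity and only uniqueness is at issue.

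For uniqueness I would argue as follows. Every conjugacy class in $E$ meets $T_0'$ (elements of $E$ are regular), and the density $\Theta_V$ is constant on conjugacy classes on $E$; hence the restriction of $\ch(V)$ to $E$ is completely determined, pointwise, by $\Theta_V|_{T_0'}$. Now let $T$ be any tempered invariant eigendistribution with $\Theta_T|_{T_0'}$ equal to the same expression. The exponents $e^{w\lambda+\rho}$, $w \in W_K$, occurring in the numerator, together with the linear independence of the characters $e^\mu$ on $T_0$ and the description of $\ker\varphi_\mu$ recalled in Section~\ref{geo_formula}, force the infinitesimal character of $T$ to be $J_\theta$, so $T \in \mathcal E_{temp}(G_0,\theta)$. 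By the same reasoning as for $\ch(V)$, the restrictions of $T$ and of $\ch(V)$ to $E$ agree. Theorem~\ref{restriction_cc} asserts that the restriction map to $E$ is injective on $\mathcal E_{temp}(G_0,\theta)$, whence $T = \ch(V)$.

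The substantive ingredients are thus entirely borrowed: the temperedness of discrete series characters and the injectivity of the restriction-to-$E$ map; everything else is bookkeeping about which $W$-orbit and which Cartan subgroup are in play. I expect the only step that needs a word of care is the reduction from ``agrees on the regular set $T_0'$ of the compact Cartan'' to ``agrees on all of $E$'' --- one uses that $E$ is open, conjugation-invariant, saturated by the $G_0$-conjugates of $T_0'$, and that the relevant densities are real analytic and constant on conjugacy classes --- but all of this has already been recorded in the paragraph preceding Theorem~\ref{restriction_cc}, so I do not anticipate a genuine obstacle.
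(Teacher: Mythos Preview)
Your proposal is correct and matches the paper's own argument, which amounts to exactly the combination you describe: Theorem~\ref{disc_char} gives the formula on $T_0'$, temperedness of discrete series characters is quoted from \cite[Lem.~76]{ds2}, and Theorem~\ref{restriction_cc} supplies uniqueness. Your extra step---reading the infinitesimal character $\theta$ off from the exponents in the numerator so that uniqueness holds among \emph{all} tempered invariant eigendistributions rather than only those in $\mathcal E_{temp}(G_0,\theta)$---is a reasonable elaboration the paper leaves implicit; just note that the clean justification for it is Harish-Chandra's description of invariant eigendistributions on a Cartan (the Weyl-denominator times $\Theta_T|_{T_0'}$ solves the radial system, hence is a combination of the $e^{\mu+\rho}$ with $\mu$ in a single $W$-orbit), not merely linear independence of characters.
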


This is equivalent to the Harish-Chandra formula for the characters of
discrete series \cite[Theorem 16]{ds2}. This establishes a precise
connection between the geometric parametrization and Harish-Chandra's
parametrization of the discrete series.

Moreover, this proves the existence of tempered invariant
distributions $\Theta_\lambda$ in \cite[Thm.~3]{ds1} which is the central
result of that paper.

In addition, it implies that for any $W$-orbit $\theta$
in $\mathfrak h^*$ consisting of regular real elements, the space of
all tempered invariant eigendistributions on $G_0$ is spanned by
the characters of the discrete series which are in $\mathcal M(\mathcal
U_\theta)$.

\section{The Blattner Conjecture}
\label{blattner_conj}
As before, let $G_0$ be a connected semisimple Lie group with finite
center and $K_0$ its maximal compact subgroup. Also, we assume that
the ranks of $G_0$ and $K_0$ are equal, so $G_0$ admits discrete
series representations. The formula for the multiplicity of a
finite-dimensional irreducible representations of $K_0$ in a discrete
series representation of $G_0$ was conjectured by Robert Blattner. His
conjecture was proved by Hecht and Schmid in
\cite{blattner}.\footnote{Their proof assumes that the group $G_0$ is
linear. Our argument removes that condition. } We shall give a very
simple proof of that formula using the geometric realization of
discrete series.

\subsection{Filtration by normal degree}
First we recall a natural filtration of a $\mathcal D$-module direct image for
a closed immersion. Let $Y$ be a smooth algebraic variety and $Z$ a
closed smooth subvariety of $Y$.  Denote by $i : Z \longrightarrow Y$
the inclusion morphism of $Z$ into $Y$. Let $\mathcal T_Y$
(resp.~$\mathcal T_Z$) be the tangent sheaf to $Y$ (resp.~Z). Denote by
$i^*(\mathcal T_Y)$ the $\mathcal O$-module inverse image of $\mathcal
T_Y$.  Define the {\em normal sheaf} to $Z$ as $\mathcal N_{Z|Y} =
i^*(\mathcal T_Y)/\mathcal T_Z$. Moreover, we denote by $\omega_Y$
(resp.~$\omega_Z$) the invertible $\mathcal O$-module of sections top
degree differential forms on $Y$ (resp. $Z$). Also, we put
$\omega_{Z|Y} = i^*(\omega_Y) \otimes_{\mathcal O_Z} \omega_Z^{-1}$.

Let $\mathcal D$ be a twisted sheaf of differential operators on
$Y$. Let $\mathcal D^i$ be the corresponding twisted sheaf of
differential operators on $Z$ \cite{book}. Let $i_\bullet$ be the
sheaf direct image functor and $i_*$ the $\mathcal O$-module direct
image functor. Then the $\mathcal D$-module direct image functor
$$
i_+(\mathcal V) = i_\bullet(\mathcal V \otimes_{\mathcal D^i}
\mathcal D_{Z \rightarrow Y})
  $$
  is an exact functor from the category of right $\mathcal
  D^i$-modules into the category of right $\mathcal D$-modules. As
  explained in the Appendix to \cite{hmsw1}, one defines a natural
  filtration of $\mathcal D_{Z \rightarrow Y}$ by left $\mathcal D^i$- and
  $i^{-1} \mathcal O_Y$-modules. By tensoring, we get a natural
  $\mathcal O_Y$-module filtration of $i_+(\mathcal V)$ which we call
  the filtration by {\em normal degree}. The corresponding graded module
  is
  $$ \Gr i_+(\mathcal V) = i_\bullet(\mathcal V \otimes_{\mathcal O_Z}
  S(\mathcal N_{Z|Y})) = i_*(\mathcal V \otimes_{\mathcal O_Z}
  S(\mathcal N_{Z|Y}))
  $$
(here $S(\mathcal N_{Z|Y})$ is the symmetric algebra of $\mathcal N_{Z|Y}$).
  Since the opposite sheaf of rings of $\mathcal D$ is also a sheaf
  of twisted differential operators, we can easily adapt this
  construction to left $\mathcal D$-modules as explained in
  \cite{bebe2}.  Going to left modules contributes a twist by
  $\omega_{Z|Y}^{-1}$ in above formula and we get that
$$
  \Gr i_+(\mathcal V) = i_*(\mathcal V \otimes_{\mathcal O_Z} \omega_{Z|Y}^{-1}
  \otimes_{\mathcal O_Z} S(\mathcal N_{Z|Y})).
  $$  

\subsection{Proof of the Blattner formula}
Let $T_0$ be a Cartan subgroup of $K_0$. Denote, as before, by $K$
and $T$ the complexifications of $K_0$ and $T_0$. Denote by $\mathfrak
k$ and $\mathfrak t$ the Lie algebras of $K$ and $T$. 

Let $Q$ be a closed $K$-orbit in the flag variety $X$ of $\mathfrak
g$. Let $R$ be the root system of $(\mathfrak g,\mathfrak t)$ in
$\mathfrak t^*$. We fix a set of positive roots $R_c^+$ of the system
of compact roots $R_c$ in $R$. Then, by the discussion in Section
\ref{closed_orbits}, there exists a unique set of positive roots $R^+$
in $R$ such that $R_c^+ = R_c \cap R^+$ and the Borel subalgebra
$\mathfrak b$ spanned by $\mathfrak t$ and root subspaces $\mathfrak
g_\alpha$ corresponding to roots $\alpha \in R^+$ is in $Q$. Let
$\rho$ (resp.~$\rho_c$) be the half-sum of roots in $R^+$
(resp.~$R_c^+$).  Put $\rho_n = \rho - \rho_c$.

Let $x$ be a point in $Q$ corresponding to the Borel subalgebra
$\mathfrak b$. The tangent space $T_x(X)$ of $X$ at $x$ is identified
with $\mathfrak g/\mathfrak b$.  This isomorphism identifies the
tangent space $T_x(Q)$ to the orbit $Q$ at $x$ with the image of
$\mathfrak k$ in $\mathfrak g/\mathfrak b$ which is isomorphic to
$\mathfrak k/(\mathfrak k \cap \mathfrak b)$ as a representation of
$B_K$. This implies the following result.

\begin{lem}
  \label{blattner-weights}
  \begin{enumerate}
    \item[(i)]
The $K$-equivariant $\mathcal O_Q$-module $\omega_{Q|X}$ is isomorphic
to $\mathcal O_Q(2\rho_n)$.
\item[(ii)]
  The weights of the representation of $T$ on the
  geometric fiber of $\mathcal N_{Q|X}$ are negative noncompact roots.
\end{enumerate}
  \end{lem}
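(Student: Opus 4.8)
The plan is to work entirely at the chosen base point $x \in Q$ and read off everything from the identification of tangent spaces with quotients of $\mathfrak g$. First I would record the basic identification: $T_x(X) = \mathfrak g/\mathfrak b_x$ as a $B_x$-module (equivalently as a $B_K$-module by restriction), and $T_x(Q) = \mathfrak k/(\mathfrak k \cap \mathfrak b_x)$, the latter sitting inside the former via the inclusion $\mathfrak k \hookrightarrow \mathfrak g$. Since $\mathfrak b_x$ contains $\mathfrak t$, all of these are $\mathfrak t$-modules with weight decompositions indexed by roots: $\mathfrak g/\mathfrak b_x$ has weights the negative roots $-R^+$ (each with multiplicity one), and $\mathfrak k/(\mathfrak k\cap\mathfrak b_x)$ has weights the negative compact roots $-R_c^+$. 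The geometric fiber of the normal sheaf $\mathcal N_{Q|X}$ at $x$ is by definition $T_x(X)/T_x(Q)$, so as a $\mathfrak t$-module its weights are exactly $-R^+ \setminus (-R_c^+) = -R_n^+$, the negative noncompact roots; since $x$ was an arbitrary point of $Q$ fixed by $B_K$ and the construction is $K$-equivariant, this computes part (ii).

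For part (i), I would use the standard fact that for a closed immersion the relative canonical bundle satisfies $\omega_{Q|X} = i_Q^*(\omega_X)\otimes\omega_Q^{-1}$, and at the level of geometric fibers at $x$ this is $\det(T_x(X))\otimes\det(T_x(Q))^{-1} = \det\bigl(T_x(X)/T_x(Q)\bigr) = \det\bigl(T_x(\mathcal N_{Q|X})\bigr)$. By part (ii) the last determinant has $\mathfrak t$-weight $\sum_{\alpha\in R_n^+}(-\alpha) = -2\rho_n$. The sign convention in the paper has $\mathcal O_Q(\mu)$ defined so that the relevant geometric fiber carries weight... here one must be careful: the $K$-equivariant line bundle on $Q \cong X_K$ whose fiber at the $B_K$-fixed point has $\mathfrak t$-weight $-2\rho_n$ is $\mathcal O_Q(2\rho_n)$ in the normalization used for the Borel--Weil theorem earlier in the paper (recall $F$ with lowest weight $\lambda$ equals $\Gamma(X,\mathcal O(\lambda))$). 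So $\omega_{Q|X} \cong \mathcal O_Q(2\rho_n)$ as claimed. This identification is $K$-equivariant because every sheaf in sight is canonically $K$-equivariant and they agree on the dense orbit.

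The main obstacle, and really the only delicate point, is bookkeeping of signs and conventions: matching the $\mathfrak t$-weight of a geometric fiber at the distinguished point against the paper's normalization of $\mathcal O_Q(\mu)$ (which is tied to the lowest-weight convention in Borel--Weil and the choice of $\Sigma^+$ making the root subspaces span $\mathfrak n_x$), and correctly handling whether $\omega_{Q|X}$ or its inverse appears. Everything else is a one-line weight count. I would therefore state the conventions explicitly at the start of the proof, reduce both assertions to computing the $T$-action on $T_x(\mathcal N_{Q|X})$, and invoke the tangent-space identification together with $R^+ = R_c^+ \sqcup R_n^+$ to finish.
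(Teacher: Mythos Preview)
Your approach is exactly the paper's: it simply records the identifications $T_x(X)\cong\mathfrak g/\mathfrak b$ and $T_x(Q)\cong\mathfrak k/(\mathfrak k\cap\mathfrak b)$ as $B_K$-modules and declares the lemma an immediate consequence, so your more detailed weight count is the intended argument.

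One concrete correction to your bookkeeping: since $\omega_Y$ is the sheaf of top-degree \emph{differential forms}, the fiber of $\omega_{Q|X}=i_Q^*(\omega_X)\otimes\omega_Q^{-1}$ at $x$ is $\det(T_x^*X)\otimes\det(T_x^*Q)^{-1}\cong\det(\mathcal N_{Q|X,x})^{-1}$, i.e.\ the top power of the \emph{conormal} space, not of the normal space as you wrote. Its $\mathfrak t$-weight is therefore $-\sum_{\alpha\in R_n^+}(-\alpha)=2\rho_n$ directly, and in the paper's convention $\mathcal O_Q(\mu)$ has geometric fiber of weight $\mu$ (this is what ``compatible with $\mu$'' means, cf.\ $\tau=\mathcal O_Q(\lambda+\rho)$), so no further sign flip is needed. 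Your two sign discrepancies happened to cancel, but it is cleaner to get each step right.
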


  Let $\tau$ be an irreducible $K$-homogeneous connection on $Q$
  compatible with $\lambda + \rho$. If we consider the
  normal degree filtration of the standard Harish-Chandra sheaf
  $\mathcal I(Q,\tau)$, we get the short exact sequence
  $$
  0 \rightarrow \F_{p-1} \mathcal I(Q,\tau) \rightarrow \F_p \mathcal I(Q,\tau)
  \rightarrow
  i_\bullet(\omega_{Q|X}^{-1} \otimes_{\mathcal O_Q} \tau \otimes_{\mathcal O_Q}
  S^p(\mathcal N_{Q|X})) \rightarrow 0.
  $$ The last sheaf is an $\mathcal O_X$-module and by the Leray
  spectral sequence
\begin{align*}
H^p(X, i_\bullet(\omega_{Q|X}^{-1} \otimes_{\mathcal O_Q} \tau
\otimes_{\mathcal O_Q} S^p(\mathcal N_{Q|X}))) & = H^p(X,
i_*(\omega_{Q|X}^{-1} \otimes_{\mathcal O_Q} \tau \otimes_{\mathcal O_Q}
S^p(\mathcal N_{Q|X})))\\
&= H^p(Q, \omega_{Q|X}^{-1} \otimes_{\mathcal O_Q} \tau \otimes_{\mathcal O_Q}
S^p(\mathcal N_{Q|X}))
\end{align*}
since $i$ is an affine morphism.

 The $K$-equivariant coherent $\mathcal O_Q$-module $\omega_{Q|X}^{-1}
 \otimes_{\mathcal O_Q} \tau \otimes_{\mathcal O_Q} S^p(\mathcal
 N_{Q|X})$ has finite-dimensional cohomology; i.e., $H^p(Q,
 \omega_{Q|X}^{-1} \otimes_{\mathcal O_Q} \tau \otimes_{\mathcal O_Q}
 S^p(\mathcal N_{Q|X}))$ are finite-dimensional algebraic
 representations of $K$ for any $p \in \mathbb Z_+$. By induction on
 $p$, from the long exact sequence of cohomology associated to the
 above short exact sequence, we conclude that $H^q(X,\F_p \mathcal
 I(Q,\tau))$ are finite-dimensional algebraic representations of
 $K$. Moreover, since $K$ is reductive, all representations in this
 long exact sequence are semisimple.
  
Let $\nu$ be a $R_c^+$-antidominant weight and $F_\nu$ the irreducible
finite-dimensional representation with lowest weight $\nu$. By
applying the functor $\Hom_K(F_\nu, - )$ we get again a long exact
sequence of finite-dimensional vector spaces. Using the Euler
principle, we conclude that
\begin{multline*}
  \sum_{q \in \mathbb Z} (-1)^q \dim \Hom_K(F_\nu, H^q(X,\F_p
  \mathcal I(Q,\tau))) \\ = \sum_{q \in \mathbb Z} (-1)^q \dim
  \Hom_K(F_\nu, H^q(X,\F_{p-1} \mathcal I(Q,\tau))) \\ + \sum_{q \in
    \mathbb Z} (-1)^q \dim \Hom_K(F_\nu, H^q(Q, \omega_{Q|X}^{-1} \otimes_{\mathcal O_Q}
  \tau \otimes_{\mathcal O_Q} S^p(\mathcal N_{Q|X}))).
\end{multline*}
Hence, by induction on $p$, we conclude that
\begin{multline*}
 \sum_{q \in \mathbb Z} (-1)^q \dim \Hom_K(F_\nu, H^q(X,\F_p
  \mathcal I(Q,\tau))) \\ \sum_{s=0}^p \sum_{q \in
    \mathbb Z} (-1)^q \dim \Hom_K(F_\nu, H^q(Q, \omega_{Q|X}^{-1} \otimes_{\mathcal O_Q}
  \tau \otimes_{\mathcal O_Q} S^p(\mathcal N_{Q|X}))).
\end{multline*}

The geometric fiber $T_x(S^p(\mathcal N_{Q|X}))$ is a algebraic
representation of $B_K$. Since $B_K$ is solvable, by Lie's theorem,
there exists a full flag of $B_K$-invariant subspaces in
$T_x(S^p(\mathcal N_{Q|X}))$. By Lemma \ref{blattner-weights}.(ii),
the weights of that representation are the sums of $p$ negative
noncompact roots.  The flag of $T_x(S^p(\mathcal N_{Q|X}))$ defines a
filtration of $S^p(\mathcal N_{Q|X})$ by $K$-equivariant $\mathcal
O_Q$-modules such that the graded $\mathcal O_Q$-module is the direct
sum of all $\mathcal O_Q( - \kappa)$ where $\kappa$ is a sum of $p$
positive momcompact roots. By tensoring with $\omega_{Q|X}^{-1}
\otimes_{\mathcal O_Q} \tau$ we get a filtration of $\omega_{Q|X}^{-1}
\otimes_{\mathcal O_Q} \tau \otimes_{\mathcal O_Q} S^p(\mathcal
N_{Q|X})$ by $K$-equivariant $\mathcal O_Q$-modules such that the
corresponding graded module is the direct sum of all $\omega_{Q|X}^{-1}
\otimes_{\mathcal O_Q} \tau \otimes_{\mathcal O_Q} \mathcal
O_Q(-\kappa)$ where $\kappa$ is a sum of $p$ positive noncompact
roots.

As above, using the Euler principle, by induction on the length of
this filtration, we conclude that
\begin{multline*}
\sum_{q \in \mathbb Z} (-1)^q \dim \Hom_K(F_\nu, H^q(Q, \omega_{Q|X}^{-1}
\otimes_{\mathcal O_Q} \tau \otimes_{\mathcal O_Q}
S^p(\mathcal N_{Q|X}))) \\
= \sum_\kappa \sum_{q \in \mathbb Z} (-1)^q \dim \Hom_K(F_\nu, H^q(Q,\omega_{Q|X}^{-1}
\otimes_{\mathcal O_Q} \tau \otimes_{\mathcal O_Q} \mathcal
O_Q(-\kappa)))
\end{multline*}
where the sum goes over all sums $\kappa$ of $p$ noncompact positive roots.

Clearly, we have $\tau = \mathcal O_Q(\lambda + \rho)$.  Moreover, by
Lemma \ref{blattner-weights}.(i), we have
  $$
  \omega_{Q|X}^{-1} \otimes_{\mathcal O_Q} \tau \otimes_{\mathcal O_Q}
  \mathcal O_Q(-\kappa) = \mathcal O_Q(\lambda + \rho_c - \rho_n - \kappa).
  $$
  So, we finally get
\begin{multline*}
\sum_{q \in \mathbb Z} (-1)^q \dim \Hom_K(F_\nu, H^q(Q, \omega_{Q|X}^{-1}
\otimes_{\mathcal O_Q} \tau \otimes_{\mathcal O_Q}
S^p(\mathcal N_{Q|X}))) \\
= \sum_\kappa \sum_{q \in \mathbb Z} (-1)^q \dim \Hom_K(F_\nu,
H^q(Q,\mathcal O_Q(\lambda + \rho_c - \rho_n -\kappa)))
\end{multline*}

The right hand side can be calculated using the Borel-Weil-Bott
theorem. The cohomology of $\mathcal O_Q(\lambda + \rho_c - \rho_n -
\kappa)$ vanishes if $\lambda - \rho_n - \kappa$ is not regular with
respect to $R_c$. If $\lambda - \rho_n - \kappa$ is regular with
respect to $R_c$, only one cohomology group $H^q(Q, \mathcal
O_Q(\lambda + \rho_c - \rho_n -\kappa))$, $q \in \mathbb Z$, can be
nonzero. More precisely, if $\mu$ is an antidominant weight with
respect to $R_c^+$, $H^q(Q, \mathcal O_Q(\lambda + \rho_c - \rho_n -
\kappa)) = F_\mu$ if and only if $\lambda - \rho_n - \kappa = w(\mu -
\rho_c)$ for $w \in W_K$ such that $\ell_K(w) = q$.  Therefore,
$\Hom_K(F_\nu, H^q(Q, \mathcal O_Q(\lambda + \rho_c - \rho_n -
\kappa))) = \mathbb C$ if and only if $\kappa = \lambda - \rho_n -
w(\nu - \rho_c)$ is a sum of $p$ positive noncompact roots and $w \in
W_K$ is such that $\ell_K(w) = q$.

Let $P$ be the function on $\mathfrak h^*$ defined in the following
way: For any $\mu$ we set $P(\mu)$ to be the number of ways one can
represent $\mu$ as a sum of positive noncompact roots. Also, set $P_p$
to be the function on $\mathfrak h^*$ defined in the following way:
For any $\mu$, $P_p(\mu)$ is the number of ways one can represent
$\mu$ as a sum of $p$ positive noncompact roots. Clearly, we have $P =
\sum_{p = 0}^\infty P_p$. 

Therefore, we have
\begin{multline*}
\sum_{q \in \mathbb Z} (-1)^q \dim \Hom_K(F_\nu, H^q(Q, \omega_{Q|X}^{-1}
\otimes_{\mathcal O_Q} \tau \otimes_{\mathcal O_Q}
S^p(\mathcal N_{Q|X}))) \\
= \sum_{w \in W_K} (-1)^{\ell_K(w)} P_p(\lambda - \rho_n
- w(\nu - \rho_c)).
\end{multline*}
By the above formula, it follows that
\begin{multline*}
\sum_{q \in \mathbb Z} (-1)^q \dim \Hom_K(F_\nu, H^q(X,\F_p \mathcal I(Q,\tau)))\\
=
 \sum_{s=0}^p \sum_{w \in W_K} (-1)^{\ell_K(w)} P_s(\lambda - \rho_n
- w(\nu - \rho_c)).
\end{multline*}
Since cohomology commutes with direct limits, by taking the limit
as $p \rightarrow \infty$, we get
\begin{multline*}
\sum_{q \in \mathbb Z} (-1)^q \dim \Hom_K(F_\nu, H^q(X,\mathcal I(Q,\tau)))\\  =
\sum_{w \in W_K} (-1)^{\ell_K(w)} P(\lambda - \rho_n
- w(\nu - \rho_c)).
\end{multline*}

Finally, since $\lambda$ is antidominant, higher cohomologies of the
standard Harish-Chandra sheaf $\mathcal I(Q,\tau)$ vanish and we see
that
$$
\dim \Hom_K(F_\nu, \Gamma(X,\mathcal I(Q,\tau))) =
\sum_{w \in W_K} (-1)^{\ell_K(w)} P(\lambda - \rho_n
- w(\nu - \rho_c)).
$$
This completes the proof of the following result.

\begin{thm}[Hecht-Schmid]
  Let $\nu$ be an antidominant weight for the root system $R_c$ and
  $F_\nu$ the finite-dimensional irreducible algebraic representation
  of $K$ with lowest weight $\nu$. Then the multiplicity of the
  representation $F_\nu$ in the restriction of the discrete series
  representation $V = \Gamma(X,\mathcal I(Q,\tau))$ to $K$ is equal to
  $$
\sum_{w \in W_K} (-1)^{\ell_K(w)} P(\lambda - \rho_n - w(\nu - \rho_c)).
  $$
\end{thm}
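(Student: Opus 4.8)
The plan is to read off the $K$-multiplicities of the discrete series module $V = \Gamma(X,\mathcal I(Q,\tau))$ directly from its geometric realization, using the normal-degree filtration of the direct image $\mathcal I(Q,\tau) = i_{Q,+}(\tau)$ recalled above, where $i_Q : Q \hookrightarrow X$ is the closed immersion. The first step is to write down, from that filtration, the short exact sequences
$$
0 \to \F_{p-1}\mathcal I(Q,\tau) \to \F_p \mathcal I(Q,\tau) \to i_\bullet\big(\omega_{Q|X}^{-1}\otimes_{\mathcal O_Q}\tau\otimes_{\mathcal O_Q} S^p(\mathcal N_{Q|X})\big) \to 0
$$
and to observe that, since $i_Q$ is affine and $Q$ is projective (indeed $Q \cong X_K$), all cohomology groups $H^q(X,\F_p\mathcal I(Q,\tau))$ are finite-dimensional algebraic representations of $K$, hence semisimple because $K$ is reductive; moreover $H^q(X, i_\bullet(\,-\,)) = H^q(Q, \omega_{Q|X}^{-1}\otimes\tau\otimes S^p(\mathcal N_{Q|X}))$ since $i_Q$ is affine.

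Next I would fix an $R_c^+$-antidominant weight $\nu$ with associated irreducible $K$-module $F_\nu$, apply $\Hom_K(F_\nu,-)$ to the long exact cohomology sequences, and run the Euler-characteristic argument: the alternating sums $\sum_q (-1)^q \dim\Hom_K(F_\nu, H^q(X,\F_p\mathcal I(Q,\tau)))$ telescope, so by induction on $p$ the problem reduces to evaluating $\sum_q (-1)^q \dim\Hom_K(F_\nu, H^q(Q, \omega_{Q|X}^{-1}\otimes\tau\otimes S^s(\mathcal N_{Q|X})))$ for $s \le p$. To evaluate these I would invoke Lemma \ref{blattner-weights}: the geometric fiber of $\mathcal N_{Q|X}$ is a $B_K$-module whose weights are the negative noncompact roots, so by Lie's theorem $S^s(\mathcal N_{Q|X})$ carries a $K$-equivariant filtration with graded pieces $\mathcal O_Q(-\kappa)$, $\kappa$ ranging over sums of $s$ positive noncompact roots; and $\omega_{Q|X}\cong\mathcal O_Q(2\rho_n)$, $\tau = \mathcal O_Q(\lambda+\rho)$, whence $\omega_{Q|X}^{-1}\otimes\tau\otimes\mathcal O_Q(-\kappa) = \mathcal O_Q(\lambda + \rho_c - \rho_n - \kappa)$. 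A second telescoping over this filtration then replaces $S^s(\mathcal N_{Q|X})$ by a sum over such $\kappa$ of single line bundles on $Q$.

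The computation is finished by Borel--Weil--Bott on $Q \cong X_K$: the cohomology of $\mathcal O_Q(\lambda + \rho_c - \rho_n - \kappa)$ vanishes unless $\lambda - \rho_n - \kappa$ is $R_c$-regular, and in the regular case exactly one $H^q$ is nonzero with $\Hom_K(F_\nu, H^q) = \mathbb C$ precisely when $\kappa = \lambda - \rho_n - w(\nu - \rho_c)$ for the unique $w \in W_K$ with $\ell_K(w) = q$. Bookkeeping the number of such partitions $\kappa$ of fixed size produces the partial noncompact partition function $P_s$, and summing the filtration degrees gives $\sum_q(-1)^q\dim\Hom_K(F_\nu, H^q(X,\F_p\mathcal I(Q,\tau))) = \sum_{s=0}^{p}\sum_{w\in W_K}(-1)^{\ell_K(w)}P_s(\lambda - \rho_n - w(\nu - \rho_c))$. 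Finally I would let $p \to \infty$: cohomology commutes with direct limits and the $\F_p\mathcal I(Q,\tau)$ exhaust $\mathcal I(Q,\tau)$, so the left side becomes $\sum_q(-1)^q\dim\Hom_K(F_\nu, H^q(X,\mathcal I(Q,\tau)))$ and $\sum_s P_s$ becomes the full noncompact partition function $P$; then, since $\lambda$ is antidominant, the higher cohomology of $\mathcal I(Q,\tau)$ vanishes (exactness of $\Gamma$ in Beilinson--Bernstein), leaving only $q=0$ and hence the asserted formula for $\dim\Hom_K(F_\nu, \Gamma(X,\mathcal I(Q,\tau)))$.

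I expect the main obstacle to be the careful management of the two nested Euler-characteristic inductions: tracking the signs $(-1)^{\ell_K(w)}$ that come from the Bott cohomological degree, confirming that every $\Hom_K$ space and cohomology group along the (infinite but effectively finite) filtrations is finite-dimensional so that additivity holds, and justifying the passage to the limit $p\to\infty$. The geometric inputs --- the identification $\omega_{Q|X}\cong\mathcal O_Q(2\rho_n)$ and the weights of $\mathcal N_{Q|X}$ --- are transparent in principle but are where a sign slip would be easiest to introduce.
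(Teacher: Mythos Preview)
Your proposal is correct and follows essentially the same route as the paper: the normal-degree filtration of $\mathcal I(Q,\tau)$, the two nested Euler-characteristic inductions using Lemma~\ref{blattner-weights} and Lie's theorem, Borel--Weil--Bott on $Q\cong X_K$, the passage to the limit $p\to\infty$, and the final vanishing of higher cohomology for antidominant $\lambda$. The paper's argument matches yours step for step, including the identification $\omega_{Q|X}^{-1}\otimes\tau\otimes\mathcal O_Q(-\kappa)=\mathcal O_Q(\lambda+\rho_c-\rho_n-\kappa)$ and the bookkeeping via $P_s$ and $P$.
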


This is Blattner's formula.

\appendix
\section{The Cousin resolution}
\label{cousinres}
In this appendix, for the convenience of the reader, we prove some
well-known results about Cousin resolutions in $\mathcal D$-module
setting (for references, see \cite{rd}, \cite{zuck}).

\subsection{Stratifications}
Let $X$ be a smooth algebraic variety\footnote{Our varieties do not
 have to be connected.} over $\mathbb C$. A subvariety $Y$ of $X$ is
{\em affinely imbedded} if for any affine open set $U$ in $X$ the set
$U \cap Y$ is affine. Clearly, if $Y$ is an
affinely imbedded subvariety and $U$ an open set in $X$, then $Y \cap
U$ is affinely imbedded in $U$.

Closed subvarieties of $X$ are affinely imbedded in $X$. Also, affine
subvarieties are affinely imbedded in $X$. The canonical inclusions of
affinely imbedded subvarieties are affine morphisms.

Let
$$
X = F_0 \supset F_1 \supset F_2 \supset \dots \supset F_n \supset F_{n+1} 
= \emptyset
$$ 
be a decreasing filtration of $X$ by closed algebraic subvarieties
satisfying the following condition:
\begin{enumerate}
\item[(S)] $F_p - F_{p+1}$ is a nonempty smooth affinely imbedded subvariety 
of $X$ of dimension $\dim X - p$ for $0 \le p \le n$.
\end{enumerate}
Then we call $(F_p ; 0 \le p \le n+1)$ a {\em stratification} of $X$
of {\em length} $n$.  We call the smooth affinely imbedded subvariety
$S_p = Z_p - Z_{p+1}$ the $(\dim X - p)$-dimensional {\em stratum} of $X$.

Let $0 \le q \le n$ and $X_q = X-F_{q+1}$. Then $X_q$ is smooth
subvariety of $X$ and $(F_p - F_{q+1}; 0 \le p \le q+1)$ is a
stratification of $X_q$ of length $q$.

\subsection{Local cohomology}
Let $\mathcal D_X$ the sheaf of differential operators in $X$. Let $Y$
be a closed smooth subvariety of $X$. Let $U = X - Y$ be the
complement of $Y$.  Let $i : Y \longrightarrow X$ and $j : U
\longrightarrow X$ be the canonical inclusions.

Denote by $\mathcal M(\mathcal D_X)$, $\mathcal M(\mathcal
D_Y)$ and $\mathcal M(\mathcal D_U)$ the corresponding categories
of quasicoherent $\mathcal D$-modules, and by $D^b(\mathcal D_X)$,
$D^b(\mathcal D_Y)$ and $D^b(\mathcal D_U)$ the corresponding derived
categories. Then the direct image functor $i_+ :\mathcal
M(\mathcal D_Y) \longrightarrow \mathcal M(\mathcal D_X)$ is
exact and lifts to an exact functor $i_+ : D^b(\mathcal D_Y)
\longrightarrow D^b(\mathcal D_X)$. Also, the direct image functor
$j_+ : \mathcal M(\mathcal D_U) \longrightarrow \mathcal
M(\mathcal D_X)$ is the ordinary sheaf-theoretic direct image
functor $j_\bullet$, it is left exact and its derived functor is 
$R j_\bullet : D^b(\mathcal D_U) \longrightarrow D^b(\mathcal D_X)$.

Let $\varGamma_Y : \mathcal M(\mathcal D_X) \longrightarrow \mathcal
M(\mathcal D_X)$ be the functor which associates to each
quasicoherent $\mathcal D_X$-module $\mathcal F$ the subsheaf of all
local sections $\varGamma_Y(\mathcal F)$ with support in $Y$. Let $R
\varGamma_Y : D^b(\mathcal D_X) \longrightarrow D^b(\mathcal D_X)$ the
corresponding functor of local cohomology.

Let $\mathcal F^\cdot$ be an object in $D^b(\mathcal D_X)$. Then we
have the canonical distinguished triangle
$$
\xymatrix@R=60pt{
& Rj_\bullet(\mathcal F^\cdot|_U) \ar[dl]^{[1]} \\
R\varGamma_Y(\mathcal F^\cdot) \ar[rr] & & \mathcal F^\cdot \ar[ul]
} 
$$
by \cite[VI.8.3]{bdm}. In addition, we have
$$
R\varGamma_Y(\mathcal F^\cdot) = i_+(Ri^!(\mathcal F^\cdot)).
$$ 
If $m = \dim X - \dim Y$, we have
$$
R^p i^!(\mathcal O_X) =
\begin{cases}
0 & \text{ for } p \ne m; \\
\mathcal O_Y & \text{ for } p = m;
\end{cases}
$$
i.e.,
$$
R\varGamma_Y(D(\mathcal O_X)) = D(i_+(\mathcal O_Y))[-m].
$$
Specializing the above distinguished triangle for $\mathcal F^\cdot
= D(\mathcal O_X)$ we get the distinguished triangle
$$
\xymatrix@R=60pt{
& Rj_\bullet(D(\mathcal O_U)) \ar[dl]^{[1]} \\
D(i_+(\mathcal O_Y))[-m] \ar[rr] & & D(\mathcal O_X) \ar[ul]
} 
$$ 
Assume that $Y$ is of codimension $1$ in $X$; i.e., $m = 1$. Then,
applying the cohomology functor to this distinguished triangle, we
get the long exact sequence:
\begin{multline*}
\dots \longrightarrow 0 \longrightarrow \mathcal O_X 
\longrightarrow j_\bullet(\mathcal O_U) 
\longrightarrow i_+(\mathcal O_Y) \longrightarrow 0 \longrightarrow \dots \\
 \dots\longrightarrow 0 \longrightarrow R^p j_\bullet(\mathcal O_U) 
\longrightarrow 0 \longrightarrow \dots .
\end{multline*}
Therefore, we get the exact sequence
$$
0 \longrightarrow \mathcal O_X 
\longrightarrow j_\bullet(\mathcal O_U) 
\longrightarrow i_+(\mathcal O_Y) \longrightarrow 0
$$
and $R^p j_\bullet(\mathcal O_Y) = 0$ for $p \ge 1$. 

Analogously, if $m > 1$, we get that 
$$
R^p j_\bullet(\mathcal O_U) = 
\begin{cases}
\mathcal O_X & \text{ for } p = 0 ; \\
i_+(\mathcal O_Y) & \text{ for } p = m-1; \\
0 & \text{ otherwise} .
\end{cases}
$$
Hence, we proved the following result.

\begin{lem}
\label{loccoh}
Let $m = \dim X - \dim Y$. Then:
\begin{enumerate}
\item[(i)]
If $m = 1$, we have the exact sequence
$$
0 \longrightarrow \mathcal O_X 
\longrightarrow j_\bullet(\mathcal O_U) 
\longrightarrow i_+(\mathcal O_Y) \longrightarrow 0
$$
and $R^p j_\bullet(\mathcal O_U) = 0$ for $p > 0$.
\item[(ii)]
If $m > 1$, we have
$$
R^p j_\bullet(\mathcal O_U) = 
\begin{cases}
\mathcal O_X & \text{ for } p = 0 ; \\
i_+(\mathcal O_Y) & \text{ for } p = m-1; \\
0 & \text{ otherwise.} 
\end{cases}
$$
\end{enumerate}
\end{lem}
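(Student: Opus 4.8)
The plan is to deduce both cases of the lemma from the single distinguished triangle
$$
R\varGamma_Y(\mathcal F^\cdot) \longrightarrow \mathcal F^\cdot \longrightarrow Rj_\bullet(\mathcal F^\cdot|_U) \xrightarrow{\ [1]\ }
$$
recalled above, specialized to $\mathcal F^\cdot = D(\mathcal O_X)$, by reading off the cohomology sheaves of each vertex degree by degree.

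First I would identify the first vertex. Using $R\varGamma_Y = i_+ \circ Ri^!$, the local cohomology computation $R^p i^!(\mathcal O_X) = 0$ for $p \ne m$ and $R^m i^!(\mathcal O_X) = \mathcal O_Y$, and the exactness of the $\mathcal D$-module direct image $i_+$ for the closed immersion $i$, one gets $R\varGamma_Y(D(\mathcal O_X)) = D(i_+(\mathcal O_Y))[-m]$, so that the triangle becomes
$$
D(i_+(\mathcal O_Y))[-m] \longrightarrow D(\mathcal O_X) \longrightarrow Rj_\bullet(D(\mathcal O_U)) \xrightarrow{\ [1]\ }.
$$
Then I would pass to the associated long exact sequence of cohomology sheaves. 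Its first term is concentrated in degree $m$ with value $i_+(\mathcal O_Y)$, its second term is concentrated in degree $0$ with value $\mathcal O_X$, and the cohomology sheaves of the third term are by definition the $R^p j_\bullet(\mathcal O_U)$. When $m = 1$ the two active degrees $0$ and $1$ are adjacent, and the long exact sequence collapses to the short exact sequence $0 \to \mathcal O_X \to j_\bullet(\mathcal O_U) \to i_+(\mathcal O_Y) \to 0$ together with $R^p j_\bullet(\mathcal O_U) = 0$ for $p > 0$. When $m > 1$ the active degrees $0$ and $m$ are separated, so the first three terms of the sequence give $R^0 j_\bullet(\mathcal O_U) = \mathcal O_X$, the connecting map in degree $m-1$ gives an isomorphism $R^{m-1} j_\bullet(\mathcal O_U) \cong i_+(\mathcal O_Y)$, and in every remaining degree $R^p j_\bullet(\mathcal O_U)$ is squeezed between two zero sheaves and hence vanishes. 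This is exactly the two assertions of the lemma.

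The one genuinely substantive ingredient — the step I would be most careful about — is the local cohomology computation $Ri^!(\mathcal O_X) = \mathcal O_Y[-m]$ for a smooth closed subvariety of codimension $m$, recorded above. It is a Zariski-local question on $X$, so one reduces to the model case in which $Y$ is the zero locus of a regular sequence of length $m$ in a smooth affine variety, where it becomes a Koszul-complex (purity) computation showing that the local cohomology is concentrated in the single degree $m$ with value $\mathcal O_Y$. Granting that, the rest is pure bookkeeping of degrees and connecting morphisms in the long exact sequence, together with the elementary split into the cases $m = 1$ and $m > 1$.
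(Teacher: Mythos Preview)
Your proof is correct and follows essentially the same route as the paper: specialize the local cohomology distinguished triangle to $D(\mathcal O_X)$, use $R\varGamma_Y = i_+ \circ Ri^!$ together with $Ri^!(\mathcal O_X) = D(\mathcal O_Y)[-m]$, and then read off the long exact sequence of cohomology sheaves in the two cases $m=1$ and $m>1$. The only difference is that you add a sketch (Koszul/purity) of the local cohomology computation $Ri^!(\mathcal O_X) = \mathcal O_Y[-m]$, which the paper simply records as a known fact.
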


\subsection{The Cousin resolution}
Let $X$ be a smooth algebraic variety with a stratification $(F_p ; 0
\le p \le n+1)$. Denote by $i_p : S_p \longrightarrow X$, $0 \le p \le
n$, the canonical inclusions of strata into $X$.

\begin{thm}
\label{cousin}
There exists a canonical exact sequence
$$
0 \rightarrow \mathcal O_X \rightarrow i_{0,+}(\mathcal O_{S_0}) 
\rightarrow i_{1,+}(\mathcal O_{S_1})  \rightarrow \dots 
\rightarrow i_{n,+}(\mathcal O_{S_n}) \rightarrow 0.
$$
\end{thm}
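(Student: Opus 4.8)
The plan is to establish the equivalent claim that the complex $\mathcal C^\bullet$ with $\mathcal C^p = i_{p,+}(\mathcal O_{S_p})$ placed in degree $p$ for $0 \le p \le n$, together with the augmentation $\mathcal O_X \to i_{0,+}(\mathcal O_{S_0})$ given by the adjunction unit (note $S_0 = X - F_1$ is open and affinely imbedded, so $i_{0,+}(\mathcal O_{S_0})$ is a single module and this makes sense), is a resolution of $\mathcal O_X$. Rewriting the resulting isomorphism $D(\mathcal O_X) \cong \mathcal C^\bullet$ in $D^b(\mathcal D_X)$ as an exact sequence is then precisely the statement. This is the $\mathcal D$-module Cousin complex (compare \cite{rd}, \cite{zuck}); I would build it by an induction that removes the strata one at a time using the local cohomology triangles of the previous subsections.

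For $0 \le q \le n$ set $X_q = X - F_{q+1}$ and let $u_q : X_q \hookrightarrow X$ be the open inclusion, so $X_n = X$ and $X_0 = S_0$. By condition (S), $S_q = F_q - F_{q+1}$ is a smooth closed subvariety of $X_q$ whose codimension in $X_q$ is $q$, with open complement $X_{q-1} = X_q - S_q$; write $\alpha_q : X_{q-1}\hookrightarrow X_q$ and $\beta_q : S_q \hookrightarrow X_q$ for the inclusions, so that $u_q\beta_q = i_q$ and $u_q\alpha_q = u_{q-1}$. Put $\mathcal A_q = R(u_q)_\bullet(D(\mathcal O_{X_q})) \in D^b(\mathcal D_X)$; then $\mathcal A_n = D(\mathcal O_X)$, and since $i_0 = u_0$ is an affine morphism ($S_0$ being affinely imbedded) one has $\mathcal A_0 = D(i_{0,+}\mathcal O_{S_0})$. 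The first step is to produce, for each $q \ge 1$, a distinguished triangle
$$
D(i_{q,+}\mathcal O_{S_q})[-q] \longrightarrow \mathcal A_q \longrightarrow \mathcal A_{q-1} \longrightarrow D(i_{q,+}\mathcal O_{S_q})[1-q],
$$
in which the middle arrow is $R(u_q)_\bullet$ applied to an adjunction unit. I would obtain this by applying the triangulated functor $R(u_q)_\bullet$ to the canonical local cohomology triangle of $\mathcal O_{X_q}$ along the smooth closed subvariety $S_q\subset X_q$: its right-hand term $R(u_q)_\bullet R(\alpha_q)_\bullet D(\mathcal O_{X_{q-1}})$ is $\mathcal A_{q-1}$ because $u_q\alpha_q = u_{q-1}$; its left-hand term is $R(u_q)_\bullet R\varGamma_{S_q}(D(\mathcal O_{X_q})) = R(u_q)_\bullet(D(\beta_{q,+}\mathcal O_{S_q}))[-q]$ by the computation $R^ki^!(\mathcal O)=\mathcal O_Y$, concentrated in the single degree equal to the codimension, recalled just before Lemma \ref{loccoh}; and finally $R(u_q)_\bullet(D(\beta_{q,+}\mathcal O_{S_q})) = D(i_{q,+}\mathcal O_{S_q})$ is a single module placed in degree $0$ because $i_q = u_q\beta_q$ is an affine morphism.

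The second step is an induction on $q$ showing that $\mathcal A_q$ is represented by the complex $[\,i_{0,+}\mathcal O_{S_0}\to i_{1,+}\mathcal O_{S_1}\to\cdots\to i_{q,+}\mathcal O_{S_q}\,]$ in degrees $0,\dots,q$, with $\mathcal A_q\to\mathcal A_{q-1}$ the canonical projection onto its stupid truncation $\sigma_{\le q-1}$. The base case $q=0$ is the identification $\mathcal A_0 = D(i_{0,+}\mathcal O_{S_0})$ above. For the inductive step, the connecting morphism of the triangle is a map in $D^b(\mathcal D_X)$ from a complex concentrated in degrees $[0,q-1]$ into the single module $i_{q,+}\mathcal O_{S_q}$ placed in degree $q-1$; such a map is represented by an honest morphism of complexes, i.e.\ by a $\mathcal D_X$-linear map $i_{q-1,+}\mathcal O_{S_{q-1}}\to i_{q,+}\mathcal O_{S_q}$ killing the incoming differential, and its mapping fibre is exactly the complex in degrees $0,\dots,q$ asserted above (this also defines the differential $\mathcal C^{q-1}\to\mathcal C^q$). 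Taking $q=n$ gives $D(\mathcal O_X)=\mathcal A_n\cong\mathcal C^\bullet$; chasing the composite of the projections $\mathcal A_n\to\mathcal A_{n-1}\to\cdots\to\mathcal A_0$, each the pushforward of an adjunction unit, identifies the induced map $\mathcal O_X=H^0(\mathcal C^\bullet)\to\mathcal C^0$ with the augmentation $\mathcal O_X\to i_{0,+}\mathcal O_{S_0}$.

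The main obstacle I anticipate is the bookkeeping in the inductive step: verifying that the derived-category gluing morphisms are realized by genuine chain maps, so that the successive mapping cones assemble into an honest complex of $\mathcal D_X$-modules (rather than a mere object of $D^b(\mathcal D_X)$) with well-defined differentials and augmentation, and checking that all the identifications are natural. The remaining points are routine but rely on the hypothesis ``affinely imbedded'': it is precisely what makes each $i_q:S_q\to X$ an affine morphism, hence guarantees that $i_{q,+}\mathcal O_{S_q}$ is a single $\mathcal D_X$-module and that $R(u_q)_\bullet$ introduces no spurious higher cohomology on sheaves supported on $S_q$, while the codimension count $\codim_{X_q}S_q=q$ is what forces the local cohomology of $\mathcal O_{X_q}$ along $S_q$ to be concentrated in a single degree.
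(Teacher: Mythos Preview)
Your proposal is correct and follows the same inductive strategy as the paper: peel off strata one at a time using the local cohomology triangle, with the codimension computation forcing the cohomology to sit in a single degree. The organization differs slightly, and the paper's version happens to sidestep exactly the obstacle you flag.

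The paper stays entirely in the abelian category $\mathcal M(\mathcal D_X)$. Its induction is on the length $n$: assume the Cousin resolution exists on $U=X_{n-1}$, then push forward the whole resolution by $j_\bullet$ where $j:U\hookrightarrow X$. Because each $S_p$ is affinely imbedded in $X$, the terms $j_{p,+}(\mathcal O_{S_p})$ are $j_\bullet$-acyclic, so the pushed-forward complex already computes $Rj_\bullet(D(\mathcal O_U))$ as an honest complex of $\mathcal D_X$-modules. Lemma~\ref{loccoh}(ii) (with $m=n$) then says this complex has $H^0=\mathcal O_X$, $H^{n-1}=i_{n,+}(\mathcal O_{S_n})$, and is exact in between; appending the cokernel map gives the Cousin resolution on $X$. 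No derived-category gluing is needed, so the ``bookkeeping'' problem of realizing connecting morphisms as genuine chain maps never arises: the differentials are literally $j_\bullet$ of the differentials on $U$, plus one new quotient map at the end.

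Your route builds the same object via iterated mapping fibres in $D^b(\mathcal D_X)$, which is fine (and your observation that a morphism from a complex in degrees $[0,q-1]$ to a module in degree $q-1$ is represented by an honest map out of the top term is correct, since $\Hom_{D^b}(E^\bullet,D(M))=\Hom(H^0(E^\bullet),M)$ for $E^\bullet$ in nonpositive degrees). But the paper's packaging is shorter and makes the ``honest complex with well-defined differentials'' structure automatic rather than something to be verified.
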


Therefore, the complex
$$
\dots \rightarrow 0 \rightarrow i_{0,+}(\mathcal O_{S_0}) 
\rightarrow i_{1,+}(\mathcal O_{S_1})  \rightarrow \dots 
\rightarrow i_{n,+}(\mathcal O_{S_n}) \rightarrow 0 \rightarrow \dots
$$ 
is a resolution of $\mathcal O_X$ in $\mathcal M(\mathcal
D_X)$. This resolution is called the {\em the Cousin resolution} of
$\mathcal O_X$.

We prove the theorem by induction in $n$. If $n = 1$, we have $F_0 =
X$ and $F_1 = Y$ is a smooth closed subvariety in $X$. Moreover, $S_0
= X - Y = U$ and $S_1 = Y$.  Therefore, the Cousin resolution reduces
to Lemma \ref{loccoh}.(i).

Assume that $n > 1$ and that the statement holds for $n-1$. Let $U =
X_{n-1}$ and denote by $j : U \longrightarrow X$ the natural
inclusion. Then $(F_p - F_n; 0 \le p \le n)$ define a stratification
of $U$. Denote by $j_p : S_p \longrightarrow U$ the natural
inclusions. By the induction assumption, we have the exact sequence
$$
0 \rightarrow \mathcal O_U \rightarrow j_{0,+}(\mathcal O_{S_0}) 
\rightarrow j_{1,+}(\mathcal O_{S_1})  \rightarrow \dots 
\rightarrow j_{n-1,+}(\mathcal O_{S_{n-1}}) \rightarrow 0 \quad .
$$ 
Therefore, we have a quasiisomorphism of $D(\mathcal O_U)$ into the
complex
$$
\dots \rightarrow 0 \rightarrow j_{0,+}(\mathcal O_{S_0}) 
\rightarrow j_{1,+}(\mathcal O_{S_1})  \rightarrow \dots 
\rightarrow j_{n-1,+}(\mathcal O_{S_{n-1}}) \rightarrow 0 
\rightarrow \dots \quad .
$$ 
By our assumption $j_p : S_p \longrightarrow U$ and $i_p:
S_p \longrightarrow X$ are affine morphisms. Therefore, the functors
$j_{p,+} : \mathcal M(\mathcal D_{S_p}) \longrightarrow
\mathcal M(\mathcal D_U)$ and $i_{p,+} : \mathcal
M(\mathcal D_{S_p}) \longrightarrow \mathcal M(\mathcal
D_X)$ are exact. Since $Rj_\bullet \circ j_{p,+} = i_{p,+}$, it
follows that the modules $j_{p,+}(\mathcal V)$ are acyclic for
$j_\bullet$ for any $\mathcal D_{S_p}$-module $\mathcal V$. Therefore,
by acting on the above complex by $j_\bullet$, we get the complex
$$
\dots \rightarrow 0 \rightarrow i_{0,+}(\mathcal O_{S_0}) 
\rightarrow i_{1,+}(\mathcal O_{S_1})  \rightarrow \dots 
\rightarrow i_{n-1,+}(\mathcal O_{S_{n-1}}) \rightarrow 0 
\rightarrow \dots \quad
$$ 
which computes $Rj_\bullet(D(\mathcal O_U))$. Since $U = X - S_n$,
by Lemma \ref{loccoh}.(ii), the kernel of the morphism
$i_{0,+}(\mathcal O_{S_0}) \rightarrow i_{1,+}(\mathcal O_{S_1})$
has to be $\mathcal O_X$, and the cokernel of the morphism
$i_{n-2,+}(\mathcal O_{S_{n-2}}) \rightarrow
i_{n-1,+}(\mathcal O_{S_{n-1}})$ has to be $i_{n,+}(\mathcal
O_{S_n})$. This clearly establishes the induction step.

\subsection{Variants}
In the main text we use the ``twisted'' versions of Theorem \ref{cousin}.

First, let $\mathcal L$ be an invertible $\mathcal O_X$-module on $X$
and $\mathcal D_{\mathcal L}$ the sheaf of differential operators on
$\mathcal L$. By tensoring by $\mathcal L$, we get immediately the
following version of Theorem \ref{cousin}.

\begin{thm}
\label{twcousin}
There exists a canonical exact sequence
$$
0 \rightarrow \mathcal L \rightarrow i_{0,+}(i_0^*(\mathcal L)) 
\rightarrow i_{1,+}(i_1^*(\mathcal L))  \rightarrow \dots 
\rightarrow i_{n,+}(i_n^*(\mathcal L)) \rightarrow 0
$$
in $\mathcal M(\mathcal D_{\mathcal L})$.
\end{thm}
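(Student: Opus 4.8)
The plan is to derive Theorem \ref{twcousin} from Theorem \ref{cousin} by applying the twist‑by‑$\mathcal L$ functor. Since $\mathcal L$ is an invertible $\mathcal O_X$‑module, the sheaf of differential operators $\mathcal D_{\mathcal L}$ on $\mathcal L$ is canonically $\mathcal L \otimes_{\mathcal O_X} \mathcal D_X \otimes_{\mathcal O_X} \mathcal L^{-1}$, so the functor $T_{\mathcal L} : \mathcal F \longmapsto \mathcal L \otimes_{\mathcal O_X} \mathcal F$ is an exact equivalence of $\mathcal M(\mathcal D_X)$ with $\mathcal M(\mathcal D_{\mathcal L})$, with quasiinverse $\mathcal G \longmapsto \mathcal L^{-1} \otimes_{\mathcal O_X} \mathcal G$; in particular $T_{\mathcal L}(\mathcal O_X) = \mathcal L$. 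The same discussion applies on each stratum: for the affine immersion $i_p : S_p \longrightarrow X$ the compatible twisted sheaf of differential operators $\mathcal D_{\mathcal L}^{i_p}$ on $S_p$ is $\mathcal D_{i_p^*\mathcal L}$, and $i_p^*(\mathcal L)$ is its natural structure sheaf.

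The first step is the identification
$$
T_{\mathcal L}(i_{p,+}(\mathcal O_{S_p})) \cong i_{p,+}(i_p^*(\mathcal L)),
$$
where on the left $i_{p,+}$ is the $\mathcal D_X$‑module direct image and on the right it is the direct image for the twisted sheaves of differential operators. This is an instance of the projection formula: for any $\mathcal D_{S_p}$‑module $\mathcal V$ there is a canonical isomorphism $\mathcal L \otimes_{\mathcal O_X} i_{p,+}(\mathcal V) \cong i_{p,+}(i_p^*(\mathcal L) \otimes_{\mathcal O_{S_p}} \mathcal V)$, and one specializes to $\mathcal V = \mathcal O_{S_p}$. Because $i_p$ is an affine morphism, $i_{p,+}$ is computed by tensoring with a transfer bimodule over $\mathcal D_{S_p}$ and then applying $i_{p,\bullet}$, and the claimed isomorphism may be checked locally on $X$: over an affine open on which $\mathcal L$ is trivial it is immediate, since tensoring with a free $\mathcal O$‑module of rank one commutes with all the functors involved.

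The second step is to apply the exact functor $T_{\mathcal L}$ to the canonical exact sequence of Theorem \ref{cousin}. Exactness is preserved, and the terms become $\mathcal L$ and the $i_{p,+}(i_p^*(\mathcal L))$ by the previous step; the differentials of the new complex are the images under $T_{\mathcal L}$ of the canonical differentials of the Cousin resolution of $\mathcal O_X$, so the resulting sequence in $\mathcal M(\mathcal D_{\mathcal L})$ is again canonical. This is precisely the asserted exact sequence.

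The main obstacle is bookkeeping rather than substance: one must keep straight that the two occurrences of $i_{p,+}$ in the projection formula refer to direct images for different but compatible twisted sheaves of differential operators, and confirm that the twist on $S_p$ induced by $\mathcal D_{\mathcal L}$ is indeed $\mathcal D_{i_p^*\mathcal L}$. Once $\mathcal L$ is trivialized on an affine cover this becomes transparent, so no genuine difficulty arises — which is why the twisted version follows at once from Theorem \ref{cousin}.
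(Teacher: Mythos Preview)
Your proposal is correct and follows exactly the paper's approach: the paper derives Theorem~\ref{twcousin} from Theorem~\ref{cousin} simply ``by tensoring by $\mathcal L$,'' and you have spelled out in detail what that one-line remark entails, including the projection-formula identification $\mathcal L \otimes_{\mathcal O_X} i_{p,+}(\mathcal O_{S_p}) \cong i_{p,+}(i_p^*(\mathcal L))$ and the bookkeeping about the induced twisted sheaves on the strata.
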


Finally, let $Y$ be a smooth closed subvariety of $X$. Denote by $i :
Y \longrightarrow X$ the canonical inclusion and by $\mathcal D^i$ the
twisted sheaf of differential operators on $Y$ induced by the twisted
sheaf of differential operators $\mathcal D$ on
$X$ \cite[1.1]{book}. Let $\mathcal L$ be an invertible $\mathcal
O_Y$-module on $Y$. Assume that $\mathcal D$ is the twisted sheaf of
differential operators on $X$ such that $\mathcal D^i = \mathcal
D_{\mathcal L}$.

Let $(F_p ; 0 \le p \le n+1)$ be a stratification of $Y$. Denote by
$i_p : S_i \longrightarrow X$ and $k_p: S_p \longrightarrow Y$
the natural inclusions of strata $S_p$, $0 \le p \le n$, into $X$
(resp.~$Y$). Then, applying the Kashiwara equivalence of categories to
closed immersion $i$, from Theorem \ref{twcousin} we get the following result.

\begin{thm}
\label{itwcousin}
There exists a canonical exact sequence 
$$
0 \rightarrow i_+(\mathcal L) \rightarrow i_{0,+}(k_0^*(\mathcal L)) 
\rightarrow i_{1,+}(k_1^*(\mathcal L))  \rightarrow \dots 
\rightarrow i_{n,+}(k_n^*(\mathcal L)) \rightarrow 0
$$
in $\mathcal M(\mathcal D)$.
\end{thm}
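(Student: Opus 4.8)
The plan is to reduce the statement to the twisted Cousin resolution already established \emph{on $Y$ itself} (Theorem \ref{twcousin}) and then transport it to $X$ along the closed immersion $i \colon Y \longrightarrow X$, using Kashiwara's equivalence of categories. First I would apply Theorem \ref{twcousin} with $Y$ playing the role of the ambient variety, with the invertible $\mathcal O_Y$-module $\mathcal L$, and with the given stratification $(F_p ; 0 \le p \le n+1)$ of $Y$. Since by hypothesis $\mathcal D^i = \mathcal D_{\mathcal L}$, this produces a canonical exact sequence
$$
0 \rightarrow \mathcal L \rightarrow k_{0,+}(k_0^*(\mathcal L))
\rightarrow k_{1,+}(k_1^*(\mathcal L)) \rightarrow \dots
\rightarrow k_{n,+}(k_n^*(\mathcal L)) \rightarrow 0
$$
in $\mathcal M(\mathcal D^i)$, where $k_p \colon S_p \longrightarrow Y$ are the inclusions of the strata and the direct images are taken along these (affine) morphisms.

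Next I would apply the direct image functor $i_+ \colon \mathcal M(\mathcal D^i) \longrightarrow \mathcal M(\mathcal D)$ attached to the closed immersion $i$. By Kashiwara's equivalence of categories in the twisted setting, $i_+$ is exact --- indeed it identifies $\mathcal M(\mathcal D^i)$ with the full subcategory of $\mathcal M(\mathcal D)$ of modules supported on $Y$ --- so it carries the sequence above to an exact sequence
$$
0 \rightarrow i_+(\mathcal L) \rightarrow i_+(k_{0,+}(k_0^*(\mathcal L)))
\rightarrow \dots \rightarrow i_+(k_{n,+}(k_n^*(\mathcal L))) \rightarrow 0
$$
in $\mathcal M(\mathcal D)$. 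It then remains only to identify the terms: by functoriality of $\mathcal D$-module direct images and transitivity of the induced twisted sheaf of differential operators, one has $i_+ \circ k_{p,+} = (i \circ k_p)_+ = i_{p,+}$, where $i_p = i \circ k_p \colon S_p \longrightarrow X$ is the inclusion of the stratum into $X$. Hence $i_+(k_{p,+}(k_p^*(\mathcal L))) = i_{p,+}(k_p^*(\mathcal L))$, and the sequence becomes the asserted one. Canonicity is inherited from that of Theorem \ref{twcousin} together with the functoriality just invoked.

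The only place where care is required is the bookkeeping with the twists: one must check that the twisted sheaf of differential operators induced on a stratum $S_p$ by restricting $\mathcal D$ along $i_p$ agrees with the one obtained by first restricting $\mathcal D$ to $\mathcal D^i$ on $Y$ and then restricting along $k_p$, and that $i_+$ (exact for a closed immersion by Kashiwara's theorem, also in the twisted case) commutes with the $k_{p,+}$ as claimed. This is the transitivity of pullback for twisted sheaves of differential operators (\cite[Ch.~I]{book}) and the compatibility of $\mathcal D$-module direct image with composition of morphisms; none of it is deep, so I expect no genuine obstacle beyond keeping the twists straight.
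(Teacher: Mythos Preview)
Your proposal is correct and follows exactly the approach the paper takes: the paper's proof is the single sentence ``applying the Kashiwara equivalence of categories to the closed immersion $i$, from Theorem \ref{twcousin} we get the following result,'' and you have simply unpacked that sentence (apply Theorem \ref{twcousin} on $Y$, push forward by the exact functor $i_+$, and use $i_+ \circ k_{p,+} = i_{p,+}$).
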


\bibliography{discrete}
\bibliographystyle{amsplain}
\end{document}